\tikzstyle{morphism}=[fill=white, draw=black, shape=rectangle]
\tikzstyle{medium box}=[fill=white, draw=black, shape=rectangle, minimum width=0.7cm, minimum height=0.7cm]
\tikzstyle{large morphism}=[fill=white, draw=black, shape=rectangle, minimum width=1.7cm, minimum height=1cm]
\tikzstyle{bn}=[fill=black, draw=black, shape=circle, inner sep=1.5pt]
\tikzstyle{state}=[fill=white, draw=black, regular polygon, regular polygon sides=3, minimum width=0.8cm, shape border rotate=180, inner sep=0pt]
\tikzstyle{medium state}=[fill=white, draw=black, regular polygon, regular polygon sides=3, minimum width=1.3cm, inner sep=0pt, shape border rotate=180]
\tikzstyle{large state}=[fill=white, draw=black, regular polygon, regular polygon sides=3, minimum width=2.2cm, shape border rotate=180, inner sep=0pt]
\tikzstyle{wide state}=[fill=white, draw=black, shape=isosceles triangle, minimum width=0.8cm, shape border rotate=270, inner sep=1.4pt, minimum height=0.5cm, isosceles triangle apex angle=80]
\tikzstyle{wn}=[fill=white, draw=black, shape=circle, inner sep=1.5pt]
\tikzstyle{blue morphism}=[fill=white, draw={rgb,255: red,15; green,0; blue,150}, shape=rectangle, text={rgb,255: red,15; green,0; blue,150}, tikzit category=blue]
\tikzstyle{red morphism}=[fill=white, draw={rgb,255: red,150; green,0; blue,2}, shape=rectangle, text={rgb,255: red,150; green,0; blue,2}, tikzit category=red]
\tikzstyle{blue state}=[fill=white, draw={rgb,255: red,15; green,0; blue,150}, shape=circle, regular polygon, regular polygon sides=3, minimum width=0.8cm, shape border rotate=180, inner sep=0pt, text={rgb,255: red,15; green,0; blue,150}, tikzit category=blue]
\tikzstyle{blue node}=[fill={rgb,255: red,15; green,0; blue,150}, draw={rgb,255: red,15; green,0; blue,150}, shape=circle, tikzit category=blue, inner sep=1.5pt]
\tikzstyle{blue}=[text={rgb,255: red,15; green,0; blue,150}, tikzit draw={rgb,255: red,191; green,191; blue,191}, tikzit category=blue, tikzit fill=white, inner sep=0mm]
\tikzstyle{blue wide state}=[fill=white, draw={rgb,255: red,15; green,0; blue,150}, text={rgb,255: red,15; green,0; blue,150}, shape=isosceles triangle, minimum width=0.8cm, shape border rotate=270, inner sep=1.4pt, minimum height=0.5cm, isosceles triangle apex angle=80]
\tikzstyle{red node}=[fill={rgb,255: red,150; green,0; blue,2}, draw={rgb,255: red,150; green,0; blue,2}, shape=circle, inner sep=1.5pt]
\tikzstyle{Purple node}=[fill={rgb,255: red,120; green,0; blue,120}, draw={rgb,255: red,120; green,0; blue,120}, text={rgb,255: red,120; green,0; blue,120}, shape=circle, inner sep=1.5pt]
\tikzstyle{red}=[text={rgb,255: red,150; green,0; blue,2}, inner sep=0mm, tikzit fill=white, tikzit draw={rgb,255: red,191; green,191; blue,191}]
\tikzstyle{purple}=[text={rgb,255: red,150; green,0; blue,150}, inner sep=0mm, tikzit fill=white, tikzit draw={rgb,255: red,191; green,191; blue,191}]
\tikzstyle{white morphism}=[fill=white, draw=white, shape=rectangle, tikzit draw={rgb,255: red,139; green,139; blue,139}]
\tikzstyle{leak morphism}=[fill=white, draw={rgb,255: red,120; green,0; blue,85}, shape=rectangle, text={rgb,255: red,120; green,0; blue,85}, tikzit category=leak]
\tikzstyle{leak}=[text={rgb,255: red,120; green,0; blue,85}, inner sep=0mm, tikzit fill=white, tikzit draw={rgb,255: red,191; green,191; blue,191}, tikzit category=leak]
\tikzstyle{leak node}=[fill={rgb,255: red,120; green,0; blue,85}, draw={rgb,255: red,120; green,0; blue,85}, shape=circle, inner sep=1.5pt, tikzit category=leak]
\tikzstyle{horiz state}=[fill=white, draw=black, regular polygon, regular polygon sides=3, minimum width=1cm, shape border rotate=90, inner sep=0pt]
\tikzstyle{grey node}=[fill={rgb,255: red,194; green,194; blue,194}, draw={rgb,255: red,194; green,194; blue,194}, shape=circle, inner sep=1.5pt]
\tikzstyle{grey morphism}=[fill=white, draw={rgb,255: red,194; green,194; blue,194}, shape=rectangle]
\tikzstyle{arrow}=[->]
\tikzstyle{dashed box}=[-, dashed]
\tikzstyle{blue arrow}=[-, draw={rgb,255: red,15; green,0; blue,150}, tikzit category=blue]
\tikzstyle{red arrow}=[-, draw={rgb,255: red,150; green,0; blue,2}, tikzit category=red]
\tikzstyle{purple arrow}=[->, draw={rgb,255: red,120; green,0; blue,120}, >=stealth, shorten <=2pt, shorten >=2pt]
\tikzstyle{protected purple arrow}=[->, draw={rgb,255: red,120; green,0; blue,120}, >=stealth, shorten <=2pt, shorten >=2pt, preaction={line width=1.8pt, white, draw}]
\tikzstyle{mapsto}=[{|->}]
\tikzstyle{double wire}=[-, double]
\tikzstyle{curly brace}=[-, draw=none, tikzit draw={rgb,255: red,128; green,0; blue,128}]
\tikzstyle{protected}=[-, preaction={line width=1.8pt,white,draw}]
\tikzstyle{leak arrow}=[-, tikzit draw={rgb,255: red,150; green,0; blue,120}]
\tikzstyle{protected leak arrow}=[-, tikzit draw={rgb,255: red,150; green,0; blue,120}]
\tikzstyle{hollow arrow}=[-, very thin, white, preaction={line width=0.7pt,draw={rgb,255: red,120; green,0; blue,85}}, tikzit category=leak, tikzit draw={rgb,255: red,150; green,0; blue,120}]
\tikzstyle{protected hollow arrow}=[-, very thin, white, preaction={line width=0.7pt,draw={rgb,255: red,120; green,0; blue,85},preaction={line width=2.1pt,white,draw}}, tikzit category=leak, tikzit draw={rgb,255: red,150; green,0; blue,120}]
\tikzstyle{over arrow}=[-, black, preaction={draw=white, double}]
\tikzstyle{grey arrow}=[-, draw={rgb,255: red,194; green,194; blue,194}]
\definecolor{probcolor}{rgb}{0.3,0.4,0.8}
\definecolor{marginalize}{rgb}{0.75,0.75,0.75}
\tikzset{
	bullet/.style={circle,fill=black, inner sep=1.5pt},  
	probnum/.style={text=probcolor, label distance=-1.5mm},
	probbar/.style={draw=probcolor, fill=probcolor},
	ar/.style={->,shorten <=1mm, shorten >=1mm},
	side/.style={dotted},
}
\newtheorem{theorem}{Theorem}[section]
\newtheorem{proposition}[theorem]{Proposition}
\newtheorem{lemma}[theorem]{Lemma}
\newtheorem{corollary}[theorem]{Corollary}
\newtheorem{definition}[theorem]{Definition}
\theoremstyle{definition}
\newtheorem{example}[theorem]{Example}
\newtheorem{remark}[theorem]{Remark}
\numberwithin{equation}{section}
\newcommand{\N}{\mathbb{N}}
\newcommand{\R}{\mathbb{R}}
\newcommand{\aseq}{\simeq}
\newcommand{\E}{\mathbb{E}}
\newcommand{\cat}[1]{{\mathsf{#1}}} 
\newcommand{\samp}{\mathrm{samp}}
\newcommand{\cop}{\mathrm{copy}}
\newcommand{\del}{\mathrm{del}}
\newcommand{\sinv}{\mathrm{sinv}}
\newcommand{\op}{\mathrm{op}}
\newcommand{\id}{\mathrm{id}} 		
\newcommand{\inv}{\mathrm{inv}}
\newcommand{\overbar}[1]{\mkern 1.5mu\overline{\mkern-1.5mu#1\mkern-1.5mu}\mkern 1.5mu}
\title{Categorical probability spaces, ergodic decompositions, and transitions to equilibrium}
\author{Noé Ensarguet and Paolo Perrone}
\date{}
\begin{document}
	
	\maketitle
	
	\begin{abstract}
		We study a category of probability spaces and measure-preserving Markov kernels up to almost sure equality.
		This category contains, among its isomorphisms, mod-zero isomorphisms of probability spaces. It also gives an isomorphism between the space of values of a random variable and the $\sigma$-algebra that it generates on the outcome space, reflecting the standard mathematical practice of using the two interchangeably, for example when taking conditional expectations.
		
		We show that a number of constructions and results from classical probability theory, mostly involving notions of \emph{equilibrium}, can be expressed and proven in terms of this category.
		In particular:
		\begin{itemize}
			\item Given a stochastic dynamical system acting on a standard Borel space, we show that the almost surely invariant $\sigma$-algebra can be obtained as a limit and as a colimit;
			\item In the setting above, the almost surely invariant $\sigma$-algebra gives rise, up to isomorphism of our category, to a standard Borel space;
			\item As a corollary, we give a purely categorical, almost sure version of the ergodic decomposition theorem for stochastic actions;
			\item As an example, we show how de Finetti's theorem and the Hewitt-Savage zero-one law fit in this limit-colimit picture.
		\end{itemize}
		This work uses the tools of categorical probability, in particular Markov categories, as well as the theory of dagger categories.
	\end{abstract}

	\tableofcontents

	\section{Introduction}
	
	In recent years there has been growing interest in category-theoretic structures for probability theory and related fields.
	While the first steps in this direction date back to Lawvere \cite{lawvere1962} and Čencov \cite{cencovcategories}, with their studies of the category $\cat{Stoch}$ of measurable spaces and Markov kernels, only in the last decade have there been systematic efforts to state, prove, and interpret results of probability theory by means of categorical methods. 
	
	Today there are two main formalisms in categorical probability, closely related to one another. On the one hand there are \emph{probability monads}\footnote{Currently, the term ``probability monad'' is an informal expression, just like ``forgetful functor''. Often one wants an affine monoidal monad to model probability distributions, see \cite{kock2012distributions}.} such as the Giry monad \cite{giry} and the Radon monad \cite{swirszcz}, see \cite{jacobs-pmonads} for an overview and \cite[Chapter~1]{paolo-phdthesis} for an introduction. These monads allow to form spaces of probability distributions, joints, marginals, and via the Kleisli and Eilenberg-Moore constructions they allow to talk about stochastic maps and of expectation values. 
	
	On the other hand we have particularly structured monoidal categories such as \emph{copy-discard categories} (\cite{chojacobs2019strings}, also called \emph{garbage-share monoidal categories} \cite{gadducci1996}) and \emph{Markov categories} (\cite{fritz2019synthetic}, also called \emph{affine copy-discard categories}). They incorporate as additional structures notions of determinism, stochastic independence, and conditioning, among other probabilistic concepts.
	Markov categories have been used to state and prove categorically a number of results of classical probability theory, such as the Hewitt-Savage and Kolmogorov zero-one laws (\cite{fritzrischel2019zeroone}), de Finetti's theorem (\cite{fritz2021definetti}), a d-separation criterion (\cite{fritz2022dseparation}), and an ergodic decomposition theorem for deterministic dynamical systems (\cite{moss2022ergodic}).
	
	As shown in \cite{fritz2020representable} and \cite{moss2022probability}, Markov categories and probability monads interact in a fruitful way. For example, the Kleisli category of an affine monoidal monad on a cartesian monoidal category is Markov.
	Both formalisms can also be seen from the perspective of computer science, where probability monads can add probabilistic computation to pure programs \cite{jones-plotkin}, and monoidal categories such as Markov categories can describe the categorical semantics of probabilistic programs, in particular the process of conditioning \cite{stein2021structural,stein2021conditioning}.
	
	In this work we advance towards a third formalism for categorical probability, based on \emph{dagger categories}, we study some of its connections to Markov categories, and use it to express some classical results of probability. A dagger category can be thought of as a category where morphisms can be ``walked either way'' without having to be isomorphisms, similarly to the edges of an undirected graph (see for example \cite{wayofdagger} for more details). Dagger categories have a long history of usage in categorical approaches to quantum information theory, at least since \cite{quantumprotocols,quantumwithoutsums,dagger-compact}, (see also \cite{cqt} for a more recent account).
	Here we are interested, instead, in \emph{classical} probability. 
	The situation we want to model is a category whose objects are probability spaces (i.e.~measurable spaces equipped with a probability measure) and whose morphisms are a version of stochastic maps, with the dagger structure given by Bayesian inversion. 
	Equivalently, from the point of view of transport theory \cite{villani}, we can view such a dagger category as a category of probability spaces and \emph{transport plans} between them (see for example \cite{lifting}).
	Just as the prototypical example of a Markov category is $\cat{BorelStoch}$, the main example of dagger category for our purposes is the category $\cat{PS(BorelStoch)}$ of standard Borel probability spaces and Markov kernels quotiented by almost sure equality. To the best of our knowledge, this category and its dagger structure were first studied in \cite{dahlqvist2018borel}. 
	In \cite[Section~13]{fritz2019synthetic} it is shown how to obtain this category from $\cat{BorelStoch}$ categorically, so that one can generalize this construction by replacing $\cat{BorelStoch}$ with an arbitrary Markov category with conditionals. 
	
	The main probabilistic phenomena which we study here in terms of daggers are notions of \emph{invariance} and \emph{equilibrium} for dynamical systems and Markov chains. We show that in the category $\cat{PS(BorelStoch)}$, the \emph{invariant $\sigma$-algebra} satisfies a dagger-categorical universal property, being both a limit and a colimit. This fact alone allows one to state and prove an almost sure version of a general ergodic decomposition theorem (similar to the one of \cite{moss2022ergodic}, but valid outside the deterministic case).
	We also show that all idempotents split in $\cat{PS(BorelStoch)}$ using the analogous result proven for $\cat{BorelStoch}$ in \cite{supports}, and give an interpretation of idempotents in $\cat{PS(BorelStoch)}$ in terms of ``averaging'' over the dynamics (see \Cref{equilibrium}).\footnote{The idea of using idempotent Markov kernels, for example via conditional expectations, to describe the long-time behavior of a system can be traced back at least to \cite{blackwell1942idempotent}. In the context of categorical probability, to the best of our knowledge the idea first appeared in \cite{fritz2021topos}.}
	To illustrate our formalism, we show that it allows us to combine de Finetti's theorem and the Hewitt-Savage zero-one law into a coherent, unified picture, compatible with the usage of these statements in traditional probability theory (see \Cref{examples}). 
	
	We hope that this work paves the way for further applications of dagger-categorical methods to probability and to dynamical systems, and that it provides deeper connections between classical and quantum probability.

	\subsubsection*{Outline}
	\begin{itemize}
		\item In \Cref{probstoch} we recall the construction of $\cat{ProbStoch(C)}$ (or $\cat{PS(C)}$), where $\cat{C}$ is a causal Markov category, and we establish some new facts about it. 
		First of all, we show that (almost sure) determinism, in the Markov-categorical sense, can be expressed in $\cat{PS(C)}$ in terms of dagger epicness (\Cref{dag id}). We then turn to study isomorphisms for the specific case of $\cat{PS(Stoch)}$: we show in \Cref{isomodzero} that they include mod-zero isomorphisms of measure spaces, and that any surjective random variable $f$ induces an isomorphism between the space of values and the space of outcomes (with $\sigma$-algebra generated by $f$).
		In \Cref{dynsys} we define dynamical systems in $\cat{PS(Stoch)}$, explain their interpretation as stationary Markov chains, and look at notions of invariance for morphisms, generalizing invariant measures and invariant observables.
		
		\item In \Cref{main} we state and prove the main structural results of this work.
		In \Cref{colimits} we show that for each (stochastic, measure-preserving) dynamical system in $\cat{PS(Stoch)}$, the $\sigma$-algebra of almost surely invariant sets is a colimit, compatible with the dagger structure.
		In \Cref{idempotents} we show that all idempotents split in $\cat{PS(BorelStoch)}$ (\Cref{idempsplit}), and that as a consequence, $\sigma$-algebra of almost surely invariant sets on a standard Borel space is again standard Borel up to isomorphism of $\cat{PS(Stoch)}$ (\Cref{xinvborel}). This in particular implies that every standard Borel space equipped with any sub-$\sigma$-algebra is again standard Borel up to isomorphism of $\cat{PS(Stoch)}$ (\Cref{subsigma}).
		In \Cref{limits} we show that the $\sigma$-algebra of almost surely invariant sets is also a \emph{limit} in $\cat{PS(BorelStoch)}$, not just a colimit. In \Cref{ergodic} we use this fact to state and prove an almost sure version of the ergodic decomposition theorem \Cref{erg_dec}.
		In \Cref{equilibrium} we then show how idempotents in $\cat{PS(BorelStoch)}$ can be used to express notions of ``equilibrium'' for dynamical systems and Markov chains. 
		
		\item In \Cref{examples} we apply the results of \Cref{main} to the concrete case of $\cat{C}=\cat{BorelStoch}$ to express categorically some classical constructions and statements of probability theory. 
		We start by looking at finite permutations (\Cref{finiteperm}), and then turn to the infinite case, where we give a categorical almost-sure version of de Finetti's theorem and of the Hewitt-Savage zero-one law, showing that i.i.d.\ sequences are ergodic under permutations (\Cref{definetti}), and of Bernoulli shifts (\Cref{bernoulli}).
		
		\item Finally, in \Cref{appendix} we give some background on Markov categories. We also provide references to more in-depth material for the interested readers. 
	\end{itemize}
	
	Most of the material presented here is part of the first author's dissertation ``Categorical Aspects of Markov Chains'', submitted towards the degree of MSc in Mathematics and Foundations of Computer Science at the University of Oxford.
	
	\subsubsection*{Acknowledgements}
	We want to thank Rob Cornish, Tobias Fritz, Sean Moss, Dario Stein, Ned Summers and Yuwen Wang for the helpful discussions and pointers, and Sam Staton and the rest of his group for the support and the interesting conversations.
	Research for the second author is funded by Sam Staton's ERC grant ``BLaSt -- a Better Language for Statistics''.

	\section{The $\cat{PS}$ construction}\label{probstoch}
	
	The main category of interest in this work is the category $\cat{PS(Borel)}$ of probability spaces and Markov kernels taken up to almost sure equality. It was first defined in \cite{dahlqvist2018borel} (under the name $\cat{Krn}$).
	
	First of all, let us define almost-sure equality for Markov kernels, instantiating the notion for general Markov categories (see \Cref{as}, as well as the original sources \cite[Section~5]{chojacobs2019strings} and \cite[Section~13]{fritz2019synthetic}). Consider measurable spaces $X$ and $Y$, and a probability measure $p$ on $X$. We say that two Markov kernels $f,g:X\to Y$ are \emph{$p$-almost surely equal} if for all measurable subsets $A$ of $X$ and $B$ of $Y$ we have that 
	$$
	\int_A f(B|x) \,p(dx) = \int_A g(B|x) \,p(dx) .
	$$ 
	Equivalently, if for all measurable subsets $B$ of $Y$, the quantities $f(B|x)$ and $g(B|x)$ are equal for all $x$ in a set of $p$-measure one. This set depends on $B$ in general, but it can be taken independently of $B$ if $Y$ is standard Borel.
	
	Whenever $f$ and $g$ are $p$-almost surely equal we write $f\aseq_p g$, or $f\aseq g$ whenever there is no ambiguity on the measure.
	
	\begin{definition}\label{defPSB}
		The category $\cat{PS(BorelStoch)}$, or more briefly $\cat{PS(Borel)}$, is constructed as follows: 
		\begin{itemize}
			\item The objects are standard Borel probability spaces $(X, p)$;
			\item The morphisms $(X, p) \to (Y, q)$ are Markov kernels $f : X \to Y$, considered modulo $p$-almost sure equality (i.e. $f = g$ in $\cat{PS(Borel)}$ when $f \aseq_p g$), and that are measure-preserving, i.e.~$f p =q$, or more explicitly, for every measurable subset $B$ of $Y$, we have
			$$
			q(B) = \int_X f(B|x)\,p(dx) . 
			$$
		\end{itemize}
	\end{definition}
	
	In \cite[Section~13]{fritz2019synthetic}, the same construction was extended from standard Borel spaces to arbitrary causal Markov categories. (See \Cref{appendix} for the terminology, as well as the original paper \cite{fritz2019synthetic}.)
	
	\begin{definition}[{\cite[Definition~13.8]{fritz2019synthetic}}]
		Let $\cat{C}$ be a causal Markov category. 
		The category $\cat{ProbStoch(C)}$, or more briefly $\cat{PS(C)}$, is defined as follows:
		\begin{itemize}
			\item Objects are probability spaces $(X, \psi)$
			\item Morphisms $(X, \psi) \to (Y, \phi)$ are the morphisms $f : X \to Y$ in $\cat{C}$, considered modulo $\psi$-a.s. equality, and that are measure-preserving, i.e. $f \psi =\phi$.
		\end{itemize}
	\end{definition}
	
	Since the categories $\cat{Stoch}$ and $\cat{BorelStoch}$ are causal, we can form the categories $\cat{PS(Stoch)}$ and $\cat{PS(BorelStoch)}$, and the latter recovers exactly \Cref{defPSB}.
	
	Whenever we have a parallel pair $f,g:X\to Y$ of morphisms of $\cat{C}$, and we also have a state $p$ on $X$, we write $f\aseq g$ if the two morphisms are $p$-almost surely equal (i.e.~if the resulting morphisms of $\cat{PS(C)}$ are equal), and $f=g$ if the morphisms of $\cat{C}$ are equal, which is a stronger condition.
	For $\cat{C}=\cat{BorelStoch}$, this is exactly the distinction between equality and almost-sure equality of Markov kernels. 
	
	\begin{proposition}[{\cite[Proposition~13.9]{fritz2019synthetic}}]
		Composition in $\cat{PS(C)}$ is well-defined, and $\cat{PS(C)}$ inherits the symmetric monoidal structure from $\cat{C}$.
	\end{proposition}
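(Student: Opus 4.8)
The plan is to verify the two assertions separately, reducing each to a property already available in $\cat{C}$. Throughout I write $[f]$ for the $\aseq$-class of a morphism $f$ of $\cat{C}$, and I use the standard characterization of almost-sure equality \cite{fritz2019synthetic}: for $f,g\colon X\to Y$ and a state $\psi$ on $X$, one has $f\aseq_\psi g$ if and only if the two joint states $(\id_X\otimes f)\,\cop_X\,\psi$ and $(\id_X\otimes g)\,\cop_X\,\psi$ on $X\otimes Y$ coincide. First I would record that measure-preservation is closed under composition and tensoring. If $f\psi=\phi$ and $g\phi=\chi$, then $(gf)\psi=g(f\psi)=g\phi=\chi$, and $\id_X$ preserves $\psi$; hence $\cat{PS(C)}$ has the right morphisms and identities, and once composition is shown to descend to $\aseq$-classes its associativity and unit laws are inherited verbatim from $\cat{C}$. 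Likewise $(f\otimes f')(\psi\otimes\psi')=(f\psi)\otimes(f'\psi')$ by bifunctoriality of $\otimes$, so the tensor of measure-preserving maps is measure-preserving.

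The heart of the matter is that $\aseq$ is a congruence for composition, i.e.\ that $[g]\circ[f]:=[gf]$ is well defined. Suppose $f\aseq_\psi f'\colon(X,\psi)\to(Y,\phi)$ and $g\aseq_\phi g'\colon(Y,\phi)\to(Z,\chi)$; I want $gf\aseq_\psi g'f'$, and I would split this into a post-composition step and a pre-composition step. The post-composition step $g'f\aseq_\psi g'f'$ is immediate from the characterization above: $f\aseq_\psi f'$ says the joints of $f$ and $f'$ on $X\otimes Y$ agree, and applying $\id_X\otimes g'$ to the $Y$-leg preserves this equality because $\id_X\otimes g'f=(\id_X\otimes g')(\id_X\otimes f)$. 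No assumption on $\cat{C}$ beyond being a Markov category is needed here.

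The pre-composition step $gf\aseq_\psi g'f$ is the crux, and is the only place where causality of $\cat{C}$ enters directly. Writing $\omega:=(\id_X\otimes f)\,\cop_X\,\psi$ for the joint of $f$, a state on $X\otimes Y$ whose $Y$-marginal is $\phi=f\psi$, the identity $\id_X\otimes gf=(\id_X\otimes g)(\id_X\otimes f)$ shows that $gf\aseq_\psi g'f$ is \emph{equivalent} to $(\id_X\otimes g)\,\omega=(\id_X\otimes g')\,\omega$. Thus the claim reduces to the following: almost-sure equality of $g$ and $g'$ with respect to the marginal $\phi$ of $\omega$ promotes to equality of the full joints after $g$, resp.\ $g'$, is applied to the $Y$-leg while the $X$-context is retained. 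This enlargement of the conditioning context is exactly what the causality axiom delivers; in $\cat{BorelStoch}$ it is the elementary estimate $\int_X\bigl|{\textstyle\int}(g(C|y)-g'(C|y))\,f(dy|x)\bigr|\,\psi(dx)\le\int_Y|g(C|y)-g'(C|y)|\,\phi(dy)=0$, and for a general causal $\cat{C}$ it is the corresponding lemma of \cite{fritz2019synthetic}. Chaining the two steps gives $gf\aseq_\psi g'f\aseq_\psi g'f'$, so composition descends to $\aseq$-classes.

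Finally, for the symmetric monoidal structure I would set $(X,\psi)\otimes(Y,\phi):=(X\otimes Y,\psi\otimes\phi)$ and $[f]\otimes[g]:=[f\otimes g]$. That $\otimes$ respects $\aseq$ follows by the same two-step pattern applied to the factorization $f\otimes g=(f\otimes\id)(\id\otimes g)$, together with the observation that tensoring a joint-state equality with a fixed state preserves it. The associator, unitors, and symmetry are the (deterministic, hence measure-preserving) coherence isomorphisms of $\cat{C}$, and every coherence axiom holds in $\cat{PS(C)}$ because it already holds in $\cat{C}$ and we have merely passed to a quotient. I expect the pre-composition congruence---equivalently, the legitimacy of enlarging the conditioning context of an almost-sure equality---to be the sole genuine obstacle, and precisely the reason the causality hypothesis on $\cat{C}$ appears in the statement.
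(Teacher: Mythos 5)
The paper offers no proof of this proposition at all, citing it verbatim from \cite[Proposition~13.9]{fritz2019synthetic}, and your argument correctly reconstructs the standard proof given in that source: measure-preservation and the coherence axioms are inherited directly, post-composition stability of almost-sure equality holds in any Markov category, and pre-composition stability---the only genuine issue---is exactly what causality provides. Your reduction of the pre-composition step to the joint-state identity $(\id_X\otimes g)\,\omega=(\id_X\otimes g')\,\omega$, your measure-theoretic verification in $\cat{BorelStoch}$, and your handling of the tensor via the factorization $f\otimes g=(f\otimes\id)(\id\otimes g)$ plus the already-established composition congruence are all correct, so this is essentially the same approach as the cited proof.
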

	
	Note however that $\cat{PS(C)}$ does not in general inherit the Markov category structure from $\cat{C}$. This comes from the fact that the copy morphism does not, in general, descend to a morphism in $\cat{PS(C)}$. Indeed, given an object $(X, p)$ in $\cat{PS(C)}$, having a morphism $\cop_{(X, p)} : (X, p) \to (X, p) \otimes (X, p)$ would impose that $\cop_{(X, p)} \circ p = p \otimes p$ (up to a unitor isomorphism). But this morphism $\cop_{(X, p)}$ cannot be $\cop_X$ in $\cat{C}$, since the previous coherence requirement would amount to the following string diagram equality:
	\ctikzfig{copy_probstoch}
	which does not hold in general (it actually holds if and only if $p$ is a deterministic state).
	
	\begin{proposition}[{\cite[Proposition~13.9 and Remark~13.10]{fritz2019synthetic}}]\label{cond implies dag}
		If $\cat{C}$ has conditionals, $\cat{PS(C)}$ is a dagger symmetric monoidal category, with daggers given by Bayesian inverses.
		In that case it is also equivalent to the category whose objects are probability spaces, whose morphisms are couplings, and whose composition is given in terms of the conditional product (see for example \cite[Remark~12.10]{fritz2019synthetic} and \cite[Section~4]{lifting}).
	\end{proposition}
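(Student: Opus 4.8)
The plan is to reduce everything to the correspondence between morphisms of $\cat{PS(C)}$ and \emph{joint states} (couplings), since in that picture both the dagger and the claimed equivalence become transparent. Recall that, in a Markov category with conditionals, a morphism $f\colon X\to Y$ of $\cat{C}$ together with a state $p$ on $X$ produces a joint state on $X\otimes Y$, namely $\gamma_f\coloneqq(\id_X\otimes f)\circ\cop_X\circ p$, whose marginals are $p$ and $q\coloneqq fp$. First I would record the characterization underlying the whole $\cat{PS}$ construction: two parallel kernels $f,g\colon X\to Y$ satisfy $f\aseq_p g$ if and only if they induce the same joint state with $p$. This identifies the hom-set $\cat{PS(C)}\big((X,p),(Y,q)\big)$ with the set of couplings of $p$ and $q$, and it is exactly here that the existence of conditionals enters for the reverse direction: every coupling $\gamma$ of $p$ and $q$ disintegrates, via a conditional of $\gamma$ over its $X$-marginal, into a kernel $f\colon X\to Y$ with $fp=q$ realizing $\gamma$.

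Next I would construct the dagger. Given $f\colon (X,p)\to (Y,q)$, define $f^\dagger$ to be a Bayesian inverse, i.e.\ (using conditionals) a kernel $Y\to X$ whose joint state with $q$ equals the swap $\sigma_{X,Y}\,\gamma_f$ of the joint state of $f$ with $p$. Under the correspondence above, $f^\dagger$ corresponds precisely to the coupling $\sigma_{X,Y}\,\gamma_f$ obtained by exchanging the two factors. Well-definedness on $\aseq$-classes, independence of the chosen Bayesian inverse, and measure-preservation ($f^\dagger q=p$) are then immediate, since all of these are statements about the associated coupling. Involutivity $(f^\dagger)^\dagger\aseq_p f$ follows from $\sigma_{Y,X}\circ\sigma_{X,Y}=\id$, and $(\id_{(X,p)})^\dagger\aseq_p \id_{(X,p)}$ from the fact that the coupling of the identity is the diagonal, which is swap-invariant.

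For contravariant functoriality I would argue at the level of couplings. Composition of $f\colon (X,p)\to(Y,q)$ and $g\colon (Y,q)\to(Z,r)$ in $\cat{PS(C)}$ corresponds to the \emph{conditional product} of $\gamma_f$ and $\gamma_g$ over their common $Y$-marginal $q$: one glues the two couplings along $Y$ and marginalizes $Y$ out to obtain a coupling of $p$ and $r$. Since exchanging the factors of $\gamma_f$ and of $\gamma_g$ and then gluing along $Y$ yields the swap of the glued coupling, this gives $(g\circ f)^\dagger\aseq_p f^\dagger\circ g^\dagger$ — exactly contravariant functoriality, together with the claim that the coupling category composes via the conditional product. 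Packaging this, $f\mapsto\gamma_f$ is an identity-on-objects, fully faithful functor to the coupling category, hence the asserted equivalence (in fact an isomorphism of categories).

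Finally, for the monoidal and unitarity statements, I would note that the symmetric monoidal structure of $\cat{PS(C)}$ is inherited from $\cat{C}$ by the preceding proposition and that $\otimes$ of couplings corresponds to $\otimes$ of morphisms, so $(f\otimes g)^\dagger\aseq f^\dagger\otimes g^\dagger$; the structural isomorphisms (associators, unitors, symmetries) are deterministic isomorphisms, and for a deterministic isomorphism the Bayesian inverse agrees with the ordinary inverse, making each structural map unitary. I expect the main obstacle to be the composition step: verifying rigorously that composing kernels and then passing to $\aseq$ matches the conditional product of the corresponding couplings, and dually that Bayesian inversion is contravariantly functorial. Both hinge on using conditionals to disintegrate and reglue joint states, and care is needed because every identity there holds only up to almost sure equality, so one must track which marginal each a.s.-statement is taken against.
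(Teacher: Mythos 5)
The paper does not actually prove this proposition: it is imported wholesale from \cite[Proposition~13.9 and Remark~13.10]{fritz2019synthetic}, so your attempt can only be measured against the proof in the cited sources. Your proposal is correct, and it reorganizes that proof rather than contradicting it. Fritz defines the dagger directly as Bayesian inversion and verifies well-definedness, involutivity, and contravariant functoriality by string-diagram computations with the defining equation of the Bayesian inverse, treating the identification with the coupling category as a separate observation (Remark~12.10 there, and \cite[Section~4]{lifting}). You instead make the coupling dictionary primary: injectivity of $f\mapsto\gamma_f\coloneqq(\id_X\otimes f)\circ\cop_X\circ p$ is literally the paper's definition of $\aseq_p$ (\Cref{defas}), surjectivity is exactly where conditionals enter, the dagger is the swap of the coupling, and composition is the conditional product --- after which every dagger axiom is read off the symmetry of the picture. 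This buys both halves of the statement at once and makes involutivity, as well as unitarity of the coherence isomorphisms, immediate (the latter is also the paper's \Cref{iso bayesian}). The cost is concentrated exactly where you flagged it: one must verify that kernel composition matches the conditional product of couplings. That verification is a short computation from the defining property of $f^\dag$ together with cocommutativity of $\cop$:
$$
(f^\dag\otimes g)\circ\cop_Y\circ q
\;=\;
(\id_X\otimes g)\circ(f^\dag\otimes\id_Y)\circ\cop_Y\circ q
\;=\;
(\id_X\otimes g)\circ(\id_X\otimes f)\circ\cop_X\circ p
\;=\;
\gamma_{g\circ f}\,,
$$
where the middle step uses that $(f^\dag\otimes\id_Y)\circ\cop_Y\circ q$ equals $\gamma_f$ by the Bayesian-inverse equation after commuting the copy. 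Contravariant functoriality then follows because exchanging the outer legs of the glued three-way joint $(f^\dag\otimes\id_Y\otimes g)\circ(\cop_Y\otimes\id_Y)\circ\cop_Y\circ q$ interchanges the roles of $f^\dag$ and $g$, so the coupling of $f^\dag\circ g^\dag$ is the swap of that of $g\circ f$. With this computation filled in, your argument is complete; there is no gap.
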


	The following proposition is known in slightly different language \cite[Proposition~7.31 and Corollary~8.6]{quantum-markov}, and is related to other existing statements in the literature such as \cite[Theorem~7]{dahlqvist2018borel} and \cite[Lemma~2.3]{suffstat}.
	
	\begin{proposition}\label{dag id}
		Let $\cat{C}$ be a causal Markov category.
		Consider a state $p$ on an object $X$ and a morphism $f:X\to Y$, and suppose its Bayesian inverse $f^\dag:(Y,q)\to(X,p)$ exists. The following conditions are equivalent.
		\begin{itemize}
			\item $f$ is $p$-almost surely deterministic;
			\item $f\circ f^\dag \aseq \id_Y$.
		\end{itemize}
	\end{proposition}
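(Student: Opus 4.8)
The plan is to reduce both conditions to a single comparison with the ``diagonal'' state $\cop_Y\circ q$ on $Y\otimes Y$, where $q=f\circ p$. The bridge between them is an identity obtained by feeding the defining equation of the Bayesian inverse through one more copy of $f$.

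First I would record the defining equation of $f^\dag$ as an equality of joint states on $X\otimes Y$,
$$(\id_X\otimes f)\circ\cop_X\circ p \;=\; (f^\dag\otimes\id_Y)\circ\cop_Y\circ q,$$
and post-compose both sides with $f\otimes\id_Y:X\otimes Y\to Y\otimes Y$. By functoriality of $\otimes$ the left-hand side becomes $(f\otimes f)\circ\cop_X\circ p$ and the right-hand side becomes $(f\circ f^\dag\otimes\id_Y)\circ\cop_Y\circ q$, giving the key identity
$$(f\otimes f)\circ\cop_X\circ p \;=\; (f\circ f^\dag\otimes\id_Y)\circ\cop_Y\circ q \qquad (\ast)$$
which holds with no assumption beyond the existence of $f^\dag$.

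Next I would observe that each of the two conditions is equivalent to the corresponding side of $(\ast)$ being equal to $\cop_Y\circ q$. For the second condition this is the equivalence $f\circ f^\dag\aseq_q\id_Y$ if and only if $(f\circ f^\dag\otimes\id_Y)\circ\cop_Y\circ q=\cop_Y\circ q$: the forward direction is immediate by composing an a.s.\ equality with $\cop_Y$ and $q$, and for the converse, writing $h:=f\circ f^\dag$, both $h$ and $\id_Y$ are conditionals of the state $\cop_Y\circ q$ for the first factor given the second (whose marginal is $q$), so by a.s.-uniqueness of conditionals in a causal (hence positive) Markov category they agree $q$-almost surely. For the first condition I would use the characterization that $f$ is $p$-almost surely deterministic if and only if $(f\otimes f)\circ\cop_X\circ p=\cop_Y\circ q$; here too the forward implication is just post-composition of $\cop_Y\circ f\aseq_p(f\otimes f)\circ\cop_X$ with $p$. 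Chaining these through $(\ast)$ converts ``$f\circ f^\dag\aseq_q\id_Y$'' into ``$(f\otimes f)\circ\cop_X\circ p=\cop_Y\circ q$'' and back, which is precisely a.s.\ determinism of $f$, closing both implications.

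The main obstacle is the converse half of the determinism characterization: deducing the kernel-level a.s.\ equality $\cop_Y\circ f\aseq_p(f\otimes f)\circ\cop_X$ from the state-level equality $(f\otimes f)\circ\cop_X\circ p=\cop_Y\circ q$. The extra rigidity comes from the fact that $\cop_Y\circ q$ is supported on the diagonal: forming the three-wire joint of $(X,Y_1,Y_2)$ with $Y_1,Y_2$ conditionally independent copies of $f$ given $X$, the hypothesis forces the $(Y_1,Y_2)$-marginal onto the diagonal, hence $Y_1$ and $Y_2$ agree almost surely in the full joint, so that joint collapses to the image of $(\id_X\otimes f)\circ\cop_X\circ p$ under the copy map of $Y$ — which is exactly the joint computing $\cop_Y\circ f$. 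Propagating the diagonal support from the marginal to the full joint is the step that genuinely uses positivity of the causal Markov category (and, in $\cat{PS(Borel)}$, standard Borelness), and it is where I would be most careful.
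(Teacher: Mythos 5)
Your reduction is set up correctly, and one half of it is a complete proof: from $p$-a.s.\ determinism of $f$, marginalizing the a.s.\ equality $\cop_Y\circ f\aseq_p(f\otimes f)\circ\cop_X$ against $p$ gives $(f\otimes f)\circ\cop_X\circ p=\cop_Y\circ q$, and your identity $(\ast)$ together with the definition of a.s.\ equality then yields $f\circ f^\dag\aseq_q\id_Y$. This is essentially the paper's argument for that direction; note also that your detour through uniqueness of conditionals in step (a) is unnecessary, since the equation $(f\circ f^\dag\otimes\id_Y)\circ\cop_Y\circ q=\cop_Y\circ q$ \emph{is} (up to the symmetry of the copy map) the definition of $f\circ f^\dag\aseq_q\id_Y$.

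The gap is exactly where you flagged it: the converse half of your determinism characterization, namely that the state-level equality $(f\otimes f)\circ\cop_X\circ p=\cop_Y\circ q$ forces $f$ to be $p$-a.s.\ deterministic, which is an equality of joints \emph{retaining the $X$ wire}. None of the axioms in play upgrades a plain equality of states to an almost-sure one: causality's hypothesis is itself already an a.s.\ equality (the axiom only adds retained wires to an a.s.\ equality you already have), and (relative) positivity requires as input a composite $g\circ f$ that is (a.s.)\ deterministic --- in your formulation of this step no such composite is available, because you have discarded $f^\dag$, which is precisely the ingredient that supplies one. Your diagonal-support argument (that $f(B|x)\in\{0,1\}$ for $p$-a.e.\ $x$, measurability of the diagonal, and so on) is a concrete real-analysis argument that does prove the step in $\cat{Stoch}$ and $\cat{BorelStoch}$, but the proposition is stated for an arbitrary causal Markov category, and as an abstract lemma this implication is a ``second-moment'' principle of Cauchy-Schwarz type --- the kind of statement the paper must later assume as a \emph{separate} axiom (its Cauchy-Schwarz property, verified for $\cat{Stoch}$ only by citation) precisely because it is not known to follow from causality; and even that axiom does not apply directly here, since verifying its hypotheses would require the three- and four-fold diagonal collapses you are trying to establish. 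The paper's proof avoids the problem entirely: it observes that $f\circ f^\dag$ is $q$-a.s.\ deterministic (being a.s.\ equal to $\id_Y$), applies \emph{relative positivity} to this composite, and combines the result with the Bayesian-inverse equation, so the input wire stays retained throughout and no marginal-to-a.s.\ upgrade is ever needed. Re-inserting $f^\dag$ into your final step along those lines would repair the argument; as written, your proof establishes the proposition only for $\cat{Stoch}$ and $\cat{BorelStoch}$, not for a general causal Markov category.
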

	
	When $\cat{C}$ has conditionals, so that $\cat{PS(C)}$ is a dagger category, this proposition characterizes a.s.~deterministic morphisms exactly as the dagger epimorphisms (a.k.a.~\emph{coisometries}, see \cite{wayofdagger} for the terminology).
	
	\begin{proof}
		Suppose first that $f \circ f^\dag \aseq_q \id_Y$. The following diagrams then prove that $f$ is $p$-almost surely deterministic:
		\ctikzfig{id_implies_det}
		
		Note that the third equality uses relative positivity of $\cat{C}$ (see \Cref{causpos}), together with the fact that $f \circ f^\dag \aseq_q \id_Y$ is $q$-a.s.~deterministic (and using \cite[Lemma~3.12]{supports} that shows that a morphism that is a.s.~equal to an a.s.~deterministic morphism is itself a.s.~deterministic).
		
		Conversely, suppose that $f$ is $p$-almost surely deterministic. We then have the following diagrams to prove that $f \circ f^\dag \aseq_q \id_Y$:
		\ctikzfig{det_implies_id}
	\end{proof}

	\begin{proposition}\label{iso bayesian}
		Suppose $f : (X, p) \leftrightarrows (Y, q) : g$ are inverses in $\cat{PS(C)}$ (with $\cat{C}$ a causal Markov category). Then $g$ is a (in $\cat{PS(C)}$, \emph{the}) Bayesian inverse of $f$ with respect to $p$ (and vice versa).
	\end{proposition}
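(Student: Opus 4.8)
The plan is to verify directly that $g$ satisfies the defining equation of a Bayesian inverse of $f$, using relative positivity of $\cat{C}$ applied to the composite $g\circ f$. Spelling out the hypotheses, ``$f$ and $g$ are inverse in $\cat{PS(C)}$'' means $g\circ f\aseq_p\id_X$ and $f\circ g\aseq_q\id_Y$, with both measure-preserving, so $f\,p=q$ and $g\,q=p$. Recall that $g$ is a Bayesian inverse of $f$ with respect to $p$ precisely when the two joint states on $X\otimes Y$ coincide:
\[
(\id_X\otimes f)\circ\cop_X\circ p \;=\; (g\otimes\id_Y)\circ\cop_Y\circ q .
\]
Everything reduces to this single equation.

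The key observation is that $g\circ f\aseq_p\id_X$ exhibits $g\circ f$ as a $p$-almost surely deterministic endomorphism of $X$, which is exactly the trigger for relative positivity (\Cref{causpos}), just as in the proof of \Cref{dag id}. Applying relative positivity to the pair $f:X\to Y$ and $g:Y\to X$ at the state $p$ lets me rewrite the joint $(\id_Y\otimes g)\circ\cop_Y\circ f$ in terms of the composite $g\circ f$: it becomes $p$-almost surely equal to $(f\otimes(g\circ f))\circ\cop_X$. Evaluating both sides at $p$ and simplifying, the left-hand side becomes $(\id_Y\otimes g)\circ\cop_Y\circ q$ because $f\,p=q$, while on the right the substitution property of almost sure equality replaces $g\circ f$ by $\id_X$ on the second leg (whose marginal is $p$), giving $(f\otimes\id_X)\circ\cop_X\circ p$; a swap of the two tensor factors turns this into the displayed equation. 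Hence $g$ is a Bayesian inverse of $f$, and since Bayesian inverses are unique up to $q$-almost sure equality, it is the Bayesian inverse in $\cat{PS(C)}$. Note that only the relation $g\circ f\aseq_p\id_X$ is used here.

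The ``vice versa'' then follows by the symmetric argument, applying relative positivity to $g$ and $f$ and using $f\circ g\aseq_q\id_Y$. The step I expect to require the most care is the application of relative positivity: one has to match the orientation of the positivity equation to the composite $g\circ f$ and track the tensor factors correctly through the swap, exactly the kind of bookkeeping already carried out in \Cref{dag id}. Everything else is routine rewriting with measure-preservation and the substitution lemma for almost surely equal morphisms.
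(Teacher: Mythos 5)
Your proposal is correct and follows essentially the same route as the paper: the paper's proof likewise invokes relative positivity (obtained from causality) applied to the composite $g\circ f$, which is $p$-almost surely deterministic because $gf\aseq_p\id_X$, and then substitutes $\id_X$ for $gf$ to transform the joint state of $(p,f)$ into that of $(q,g)$, yielding the Bayesian-inverse equation. Your additional remarks — that only $gf\aseq_p\id_X$ and $fp=q$ are needed, and that uniqueness up to $q$-a.s.\ equality gives ``the'' Bayesian inverse in $\cat{PS(C)}$ — are consistent with what the paper states.
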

	
	Together with the previous proposition, this proves that any isomorphism of $\cat{PS(C)}$ is almost surely deterministic.
	In the language of dagger categories, this says that every isomorphism is a dagger isomorphism (a.k.a.~\emph{unitary}).
	
	\begin{proof}
		By causality, $\cat{C}$ is also relatively positive, such that the following string diagram equality holds:
		\ctikzfig{iso_bayesian}
		where the first equality follows from (a marginalized version of) relative positivity together with the fact that $gf \aseq_p \id_X$ is $p$-almost surely deterministic.
	\end{proof}

	Note also that by \Cref{a-s det composition}, a.s.\ deterministic morphisms form a subcategory of $\cat{PS(C)}$.

	\subsection{Isomorphisms in $\cat{PS(Stoch)}$}\label{iso}
	
	Every isomorphism of $\cat{C}$ descends to an isomorphism in $\cat{PS(C)}$.
	In the other direction, isomorphisms of $\cat{PS(C)}$ are in general much weaker than isomorphisms in $\cat{C}$.
	This is particularly true for the case of $\cat{C}=\cat{Stoch}$. It was shown in \cite[Appendix~A]{moss2022ergodic} that in general, deterministic isomorphisms in $\cat{Stoch}$ are more general than isomorphisms of measurable spaces, since they take into account the ``indistinguishability'' relation induced by $\sigma$-algebras.
	For $\cat{PS(Stoch)}$ we have a further weakening: isomorphisms of $\cat{PS(Stoch)}$ take into account a form of ``indistinguishability up to measure zero''. We will now illustrate this idea via concrete examples. 
	A possible way to interpret the difference between $\cat{Stoch}$ and $\cat{PS(Stoch)}$ is that $\cat{Stoch}$ sees ``how the points are partitioned, and where they are mapped'', while $\cat{PS(Stoch)}$ sees ``how the \emph{mass} is partitioned, and where it is mapped''. 
	
	The first example will show that given a surjective random variable or random element $\Omega\to R$, in $\cat{Stoch}$ and $\cat{ProbStoch}$ there is an isomorphism between $R$ as a probability space, and $\Omega$ as a probability space taken with the $\sigma$-algebra generated by that of $R$. 
	This is particularly convenient when taking marginals or conditional expectations: in this category there is no difference between applying the projections or just coarse-graining the $\sigma$-algebra. This idea is standard practice in probability theory, and the categories of kernels $\cat{Stoch}$ and $\cat{PS(Stoch)}$ make this precise. 
	
	\begin{proposition}\label{isoRV}
		Let $(\Omega,\Sigma_\Omega,p)$ and $(R,\Sigma_R,q)$ be probability spaces, and let $f:(\Omega,\Sigma_\Omega,p)\to(R,\Sigma_R,q)$ be a surjective measure-preserving function.
		Denote by $\Sigma_f$ the $\sigma$-algebra on $\Omega$ generated by $f$, i.e.~the one that consists of those measurable sets in the form $f^{-1}(B)$ for $B\in\Sigma_R$.
		With this $\sigma$-algebra, the morphism $\tilde{f}:(\Omega,\Sigma_f)\to(R,\Sigma_R)$ defined by $f$ is an isomorphism of $\cat{Stoch}$ (hence also of $\cat{PS(Stoch)}$).
	\end{proposition}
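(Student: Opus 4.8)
The plan is to exhibit an explicit two-sided inverse for $\tilde f$ directly in $\cat{Stoch}$, and then to observe that this inverse is automatically measure-preserving, so that the isomorphism descends to $\cat{PS(Stoch)}$. The candidate inverse is the kernel $g:(R,\Sigma_R)\to(\Omega,\Sigma_f)$ that sends $r$ to a point mass supported on the fibre $f^{-1}(r)$. Concretely, since every element of $\Sigma_f$ is of the form $f^{-1}(B)$ for some $B\in\Sigma_R$, I would \emph{define} the probability measure $g(-\mid r)$ on $(\Omega,\Sigma_f)$ by $g(f^{-1}(B)\mid r):=\delta_r(B)$, where $\delta_r$ is the Dirac measure at $r\in R$.

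The first thing to check is that $g$ is well-defined and is a genuine Markov kernel, and this is where the choice of the coarse $\sigma$-algebra $\Sigma_f$ does all the work. Because $f$ is surjective we have $f(f^{-1}(B))=B$, so the assignment $B\mapsto f^{-1}(B)$ is injective and commutes with countable Boolean operations; hence $g(-\mid r)$ is an unambiguously defined probability measure on $\Sigma_f$ for each fixed $r$. For fixed $A=f^{-1}(B)\in\Sigma_f$, the map $r\mapsto g(A\mid r)=\delta_r(B)$ is $\Sigma_R$-measurable, so $g$ is indeed a Markov kernel. I want to stress that this is the only delicate point, and that it is exactly where working with $\Sigma_f$ rather than $\Sigma_\Omega$ is essential: a section $R\to(\Omega,\Sigma_\Omega)$ would require a \emph{measurable} selection of representatives $\omega_r\in f^{-1}(r)$, which in general needs extra hypotheses (e.g.\ a standard Borel structure), whereas with $\Sigma_f$ any set-theoretic choice of representatives induces the very same measure on $\Sigma_f$, so the usual obstacle disappears.

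Next I would verify the two composition identities in $\cat{Stoch}$ by direct computation, using that $\tilde f$ is the deterministic kernel $\tilde f(-\mid\omega)=\delta_{f(\omega)}$ (which is a well-defined morphism on $(\Omega,\Sigma_f)$ since $\tilde f(B\mid -)$ is the indicator of $f^{-1}(B)\in\Sigma_f$). For one composite, $(\tilde f\circ g)(B\mid r)=\int_\Omega \tilde f(B\mid\omega)\,g(d\omega\mid r)=g(f^{-1}(B)\mid r)=\delta_r(B)$, so $\tilde f\circ g=\id_R$. For the other, for $A=f^{-1}(B)\in\Sigma_f$ one gets $(g\circ\tilde f)(A\mid\omega)=g(A\mid f(\omega))=\delta_{f(\omega)}(B)$, which equals $1$ precisely when $\omega\in f^{-1}(B)=A$, i.e.\ $g\circ\tilde f=\id_{(\Omega,\Sigma_f)}$. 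Both identities hold on the nose, not merely almost surely, which already shows that $\tilde f$ is an isomorphism of $\cat{Stoch}$.

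Finally, to pass to $\cat{PS(Stoch)}$ I would equip $(\Omega,\Sigma_f)$ with the restriction of $p$; the hypothesis that $f$ is measure-preserving says exactly that $\tilde f$ is a morphism $(\Omega,\Sigma_f,p)\to(R,\Sigma_R,q)$ of $\cat{PS(Stoch)}$. Measure-preservation of $g$ then comes for free from the relation $g\circ\tilde f=\id$: acting on the measure $p$ and using $\tilde f\,p=q$ gives $g\,q=g(\tilde f\,p)=(g\circ\tilde f)\,p=p$, so $g$ is a morphism of $\cat{PS(Stoch)}$ and is the required inverse. Hence $\tilde f$ is an isomorphism of $\cat{PS(Stoch)}$ as well. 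The only part of the writeup needing care is the well-definedness and measurability argument of the second paragraph; everything else is a routine verification.
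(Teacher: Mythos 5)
Your proof is correct and follows essentially the same route as the paper: you construct the same inverse kernel $g(f^{-1}(B)\mid r)=1_B(r)$ (the paper's $k$), verify both composites are identity kernels on the nose, and note that well-definedness rests on surjectivity making $B\mapsto f^{-1}(B)$ a bijection onto $\Sigma_f$. Your added remarks on measurability, countable additivity, and the descent to $\cat{PS(Stoch)}$ only make explicit what the paper leaves implicit.
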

	
	Note that this implies that if $(R,\Sigma_R)$ is a standard Borel space (for example, $\R$ with its Borel $\sigma$-algebra), then $(\Omega,\Sigma_f)$ is standard Borel too, up to deterministic isomorphism of $\cat{Stoch}$.
	
	Note also that the proposition above, together with the next one (\Cref{isomodzero}), shows that $f$ gives an isomorphism of $\cat{PS(Stoch)}$ also in the case when it is not surjective but its image $f(\Omega)\subseteq R$ is measurable.
	
	\begin{proof}
		The Markov kernel induced by $\tilde{f}$ is as follows, for every $\omega\in\Omega$ and every $B\in\Sigma_R$:
		$$
		\delta_{\tilde{f}}(B|\omega) \coloneqq  1_{f^{-1}(B)}(\omega) =
		\begin{cases}
			1 & f(\omega) \in B ; \\
			0 & f(\omega) \notin B .
		\end{cases}
		$$
		Define now the kernel $k : (R,\Sigma_R)\to(\Omega,\Sigma_f)$ as follows: for every measurable $B\in\Sigma_R$ (since every measurable subset in $\Sigma_f$ is in the form $f^{-1}(B)$ for a unique $B$, by surjectivity of $f$) and each $r\in R$
		$$
		k(f^{-1}(B)|r) \coloneqq 1_{B}(r) = 
		\begin{cases}
			1 & r\in B ; \\
			0 & r\notin B .
		\end{cases}
		$$
		Let us now show that $k\circ \delta_{\tilde{f}}$ and $\delta_{\tilde{f}}\circ k$ are equal to the identity kernels.
		Let $B\in\Sigma_R$, $\omega\in\Omega$, and $r\in R$. Then
		\begin{align*}
			(k\circ \delta_{\tilde{f}}) (f^{-1}(B)|\omega) &= \int_R k(f^{-1}(B)|r) \, \delta_{\tilde{f}}(dr|\omega) \\
			&= \int_R 1_{B}(r) \, \delta_{\tilde{f}}(dr|\omega) \\
			&= \delta_{\tilde{f}}(B|\omega) \\
			&= 1_{f^{-1}(B)}(\omega) ,
		\end{align*}
		which is the identity kernel on $(\Omega,\Sigma_f)$, and 
		\begin{align*}
			(\delta_{\tilde{f}}\circ k) (B|r) &= \int_\Omega \delta_{\tilde{f}}(B|\omega)\, k(d\omega|r) \\
			&= \int_\Omega 1_{f^{-1}(B)}(\omega)\, k(d\omega|r) \\
			&= k(f^{-1}(B)|r) \\
			&= 1_{B}(r) ,
		\end{align*}
		which is the identity kernel on $(R,\Sigma_R)$. 
	\end{proof}
	
	Let us now see how isomorphisms ``up to measure zero'' in $\cat{Stoch}$ result in isomorphisms in $\cat{PS(Stoch)}$. 
	
	\begin{proposition}\label{isomodzero}
		Recall that given probability spaces $(X,\Sigma_X,p)$ and $(Y,\Sigma_Y,q)$, an \emph{isomorphism modulo zero} (or \emph{mod zero}) is a measure-preserving isomorphism $(X',\Sigma_{X'},p')\to (Y',\Sigma_{Y'},q')$, where $X'$ and $Y'$ are subsets of $X$ and $Y$ respectively of measure one, taken with the induced $\sigma$-algebras and measures.
		
		An isomorphism mod zero between $(X,\Sigma_X,p)$ and $(Y,\Sigma_Y,q)$ induces an isomorphism $(X,\Sigma_X,p)\to(Y,\Sigma_Y,q)$ of $\cat{PS(Stoch)}$.
	\end{proposition}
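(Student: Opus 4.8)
The plan is to lift the given mod-zero isomorphism to a pair of deterministic Markov kernels between the \emph{full} spaces and to check that they are mutually inverse in $\cat{PS(Stoch)}$, where almost-sure equality lets us discard the null complements. Write $\phi:(X',\Sigma_{X'},p')\to(Y',\Sigma_{Y'},q')$ for the given mod-zero isomorphism and $\psi$ for its inverse, where $X'\in\Sigma_X$ and $Y'\in\Sigma_Y$ have measure one and $p',q'$ are the induced (restricted) measures.

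First I would extend $\phi$ to a function $\tilde\phi:X\to Y$ by setting $\tilde\phi=\phi$ on $X'$ and $\tilde\phi=y_0$ on $X\setminus X'$ for an arbitrary fixed $y_0\in Y$, and similarly extend $\psi$ to $\tilde\psi:Y\to X$. The only content at this step is measurability. Since $X'\in\Sigma_X$ carries the trace $\sigma$-algebra $\Sigma_{X'}=\{A\cap X':A\in\Sigma_X\}$, for $B\in\Sigma_Y$ one has $\tilde\phi^{-1}(B)=\phi^{-1}(B\cap Y')\cup S_B$, where $S_B=X\setminus X'$ if $y_0\in B$ and $S_B=\emptyset$ otherwise; both pieces lie in $\Sigma_X$ because $X'\in\Sigma_X$, so $\tilde\phi$ is $\Sigma_X/\Sigma_Y$-measurable, and likewise $\tilde\psi$ is measurable. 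These induce deterministic kernels $\delta_{\tilde\phi}:X\to Y$ and $\delta_{\tilde\psi}:Y\to X$.

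Next I would verify simultaneously that these kernels are morphisms of $\cat{PS(Stoch)}$ and that they are mutually inverse. For measure-preservation, since $p(X\setminus X')=0$, for $B\in\Sigma_Y$ we compute $\tilde\phi_*p(B)=p(\phi^{-1}(B\cap Y'))=p'(\phi^{-1}(B\cap Y'))=q'(B\cap Y')=q(B)$, using that $\phi_*p'=q'$ and that $p',q'$ are the restrictions of $p,q$; symmetrically $\tilde\psi_*q=p$. For the inverse property, functoriality of $\delta$ gives $\delta_{\tilde\psi}\circ\delta_{\tilde\phi}=\delta_{\tilde\psi\circ\tilde\phi}$, so it suffices to show $\tilde\psi\circ\tilde\phi=\id_X$ on a set of full $p$-measure. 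On $X'':=X'\cap\tilde\phi^{-1}(Y')$ we have $\tilde\psi(\tilde\phi(x))=\psi(\phi(x))=x$ since $\psi\circ\phi=\id_{X'}$, and $X''$ has full measure because $p(\tilde\phi^{-1}(Y'))=\tilde\phi_*p(Y')=q(Y')=1$. Since deterministic kernels whose underlying functions agree $p$-almost everywhere are $p$-almost surely equal, we conclude $\delta_{\tilde\psi}\circ\delta_{\tilde\phi}\aseq_p\id_X$, and symmetrically $\delta_{\tilde\phi}\circ\delta_{\tilde\psi}\aseq_q\id_Y$. Hence $\delta_{\tilde\phi}$ is an isomorphism of $\cat{PS(Stoch)}$ with inverse $\delta_{\tilde\psi}$.

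The main obstacle I anticipate is bookkeeping rather than conceptual: making sure the off-null extensions are honestly measurable with respect to the ambient $\sigma$-algebras (not merely the trace $\sigma$-algebras on $X'$ and $Y'$), and tracking which sets have full measure so that the pointwise identities $\tilde\psi\circ\tilde\phi=\id_X$ and $\tilde\phi\circ\tilde\psi=\id_Y$ hold $p$- and $q$-almost everywhere respectively. Alternatively, one can package the argument more modularly by first proving that for any full-measure measurable subset $X'\subseteq X$ the inclusion induces an isomorphism $(X',p')\cong(X,p)$ in $\cat{PS(Stoch)}$ (the same extension trick, applied to the inclusion and a retraction fixing $X'$), and then obtaining the claim by composing $(X,p)\cong(X',p')\cong(Y',q')\cong(Y,q)$.
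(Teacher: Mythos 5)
Your proposal is correct and takes essentially the same route as the paper: both lift the mod-zero isomorphism to a pair of kernels between the full spaces $X$ and $Y$ and verify that the two composites are almost surely the identity kernels, the key point in both cases being that $\psi\circ\phi=\id$ on the full-measure set $X'$. The only difference is bookkeeping: the paper defines $k_f(B|x)=1_{f^{-1}(B\cap Y')}(x)$ directly, which assigns the zero measure (rather than a probability measure) to points of $X\setminus X'$, whereas your basepoint extension $\tilde\phi$ yields honest Markov kernels everywhere --- a slightly tidier handling of the same null-set issue.
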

	\begin{proof}
		Let $f:(X',\Sigma_{X'},p')\to (Y',\Sigma_{Y'},q')$ be a measure-preserving map with inverse $g:(Y',\Sigma_{Y'},q')\to(X',\Sigma_{X'},p')$.
		Define now the kernels $k_f:X\to Y$ and $k_g:Y\to X$.
		For every $x\in X$ and every measurable subset $B\subseteq Y$, set
		$$
		k_f(B|x) \coloneqq 1_{f^{-1}(B\cap Y')}(x) = 
		\begin{cases}
			1 & f(x)\in B\cap Y' ; \\
			0 & f(x)\notin B\cap Y' .
		\end{cases}
		$$
		Similarly, for every $y\in Y$ and every measurable subset $A\subseteq X$, set
		$$
		k_g(A|y) \coloneqq 1_{g^{-1}(A\cap X')}(y) =
		\begin{cases}
			1 & g(y)\in A\cap X' ; \\
			0 & g(y)\notin A\cap X' .
		\end{cases}
		$$
		Let us now check that the compositions $k_g\circ k_f$ and $k_f\circ k_g$ are almost surely equal to the identities. 
		Let $A,S\in\Sigma_X$. Then
		\begin{align*}
			\int_S (k_g\circ k_f) (A|a) \,p(da) &= \int_S \int_Y k_g(A|y) \, k_f (dy|a) \,p(da) \\
			&= \int_S \int_Y 1_{g^{-1}(A\cap X')}(y) \, k_f (dy|a) \,p(da) \\
			&= \int_S k_f (g^{-1}(A\cap X')|a) \,p(da) \\
			&= \int_S 1_{f^{-1}\big(g^{-1}(A\cap X')\cap Y'\big)}(x) \,p(da) .
		\end{align*}
		Note now that since $f$ and $g$ are inverses,
		\begin{align*}
			f^{-1}\big(g^{-1}(A\cap X')\cap Y'\big) &= f^{-1}\big(g^{-1}(A\cap X')\big) \cap f^{-1}(Y') \\
			&= (A\cap X')\cap X' \\
			&= A\cap X' ,
		\end{align*}
		so we are left with 
		\begin{align*}
			\int_S (k_g\circ k_f) (A|a) \,p(da) &= \int_S 1_{f^{-1}\big(g^{-1}(A\cap X')\cap Y'\big)}(x) \,p(da) \\
			&= \int_S 1_{A\cap X'}(x) \,p(da) \\
			&= \int_S 1_{A}(x) \,p(da) ,
		\end{align*}
		since $1_A$ and $1_{A\cap X'}$ only differ on a subset of measure zero.
		The integrand is the identity kernel on $X$.
		The other direction works similarly.
	\end{proof}
	
	Here is a combination of the previous two types of isomorphism.
	
	\begin{corollary}\label{isopartition}
		Let $(X,\Sigma)$ be a standard Borel space, let $p$ be a probability measure on it, and let $(A_1,\dots,A_n)$ be a finite measurable partition of $X$, i.e.~a collection of mutually disjoint measurable subsets $A_1,\dots,A_n\subseteq X$ such that $\coprod_i A_i= X$. 
		
		Denote by $\Sigma_A$ the $\sigma$-algebra generated by $(A_1,\dots,A_n)$, and denote the restriction of $p$ to $\Sigma_A$ again by $p$.
		Without loss of generality, suppose that all the $A_1,\dots,A_k$ for $k\le n$ have positive measure.
		
		Then $(X,\Sigma_A,p)$ is isomorphic in $\cat{PS(Stoch)}$ to the set $[k]=\{1,\dots,k\}$ with the discrete $\sigma$-algebra and the measure $q$ given by $q(i)=p(A_i)$. 
	\end{corollary}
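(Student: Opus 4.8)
The plan is to realize the whole statement as a direct combination of \Cref{isoRV} and \Cref{isomodzero}, applied to the ``labelling'' map that records which cell of the partition a point lies in. Concretely, first I would define the function $f\colon X\to[n]$ sending $\omega$ to the unique index $i$ with $\omega\in A_i$; this is well defined because the $A_i$ partition $X$. Working with the discrete $\sigma$-algebra on $[n]$, one has $f^{-1}(B)=\bigcup_{i\in B}A_i$ for every $B\subseteq[n]$, so $f$ is measurable for $\Sigma_A$, and since each generator satisfies $A_i=f^{-1}(\{i\})$, the $\sigma$-algebra $\Sigma_f$ generated by $f$ is exactly $\Sigma_A$. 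By construction $f$ pushes $p$ forward to the measure $q(i)=p(A_i)$, so $f$ is measure-preserving.

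The first real step is to apply \Cref{isoRV}. Let $I=f(X)=\{i:A_i\neq\emptyset\}$, equipped with the induced (discrete) $\sigma$-algebra and the restriction of $q$. Then $f\colon(X,\Sigma_A,p)\to(I,q)$ is a surjective measure-preserving function whose generated $\sigma$-algebra is $\Sigma_A$, so \Cref{isoRV} yields an isomorphism $(X,\Sigma_A,p)\cong(I,q)$ in $\cat{Stoch}$, hence in $\cat{PS(Stoch)}$. (Equivalently one could keep the target $[n]$ and invoke the non-surjective version noted immediately after \Cref{isoRV}.)

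The second step discards the null cells via \Cref{isomodzero}. Since positive measure forces non-emptiness, $\{1,\dots,k\}\subseteq I$, and the remaining labels carry no mass: $q(I\setminus[k])=\sum_{i>k}p(A_i)=0$. Hence $[k]$ is a full-measure subset of $I$, and the identity map $[k]\to[k]$ is an isomorphism mod zero between $(I,q)$ and $([k],q)$. By \Cref{isomodzero} this induces an isomorphism $(I,q)\cong([k],q)$ of $\cat{PS(Stoch)}$. Composing the two isomorphisms gives $(X,\Sigma_A,p)\cong([k],q)$, as required; one checks in passing that $q$ is indeed a probability measure on $[k]$ since $\sum_{i\le k}p(A_i)=p(X)=1$.

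I do not expect a genuine obstacle here: both propositions are already available and the construction is essentially forced, so the corollary is really an assembly of the two preceding results. The only points requiring care are bookkeeping ones — verifying that $\Sigma_f$ coincides with $\Sigma_A$ on the nose (rather than merely up to null sets), and correctly separating the two distinct reasons a cell may be dropped, namely emptiness (handled by passing to the image in \Cref{isoRV}) versus nonempty-but-null (handled by the mod-zero step of \Cref{isomodzero}). Keeping these two mechanisms apart is the one place where a careless argument could conflate ``no points'' with ``no mass''.
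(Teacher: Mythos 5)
Your proposal is correct and is exactly the argument the paper intends: the corollary is introduced there as ``a combination of the previous two types of isomorphism,'' i.e.\ \Cref{isoRV} applied to the labelling map onto its image and \Cref{isomodzero} to discard the null cells, which is precisely your two-step construction. Your additional bookkeeping (checking $\Sigma_f=\Sigma_A$ on the nose and separating empty cells from nonempty null cells) correctly fills in the details the paper leaves implicit.
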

	
	In particular, $(X,\Sigma_A,p)$ is isomorphic, in $\cat{PS(Stoch)}$, to a standard Borel space, and hence it is, up to isomorphism, in $\cat{PS(Borel)}$.
	This proposition can be easily generalized to the countable case. Less easy is its generalization to an \emph{arbitrary sub-$\sigma$-algebra}, which we will prove as \Cref{subsigma}.
	
	Here is a sketch for the case of $n=3$ and $k=2$, where the blue columns denote the probabilities:
	\begin{center}
		\begin{tikzpicture}[baseline,scale=1,
			ar/.append style={shorten >= 2mm},
			]
			\node[bullet, label={[probnum]45:$0.2$}] (00) at (0,0.7) {};
			\draw[probbar] (00) -- ++(-0.2,0) -- ++(0,0.2) -- ++(0.2,0) -- (00) ;
			\node[bullet, {label=[probnum]225:$0.4$}] (10) at (1,0.3) {};
			\draw[probbar] (10) -- ++(-0.2,0) -- ++(0,0.4) -- ++(0.2,0) -- (10) ;
			10
			\node[bullet, {label=[probnum]45:$0.1$}] (01) at (0,-0.8) {};
			\draw[probbar] (01) -- ++(-0.2,0) -- ++(0,0.1) -- ++(0.2,0) -- (01) ;
			\node[bullet, {label=[probnum]225:$0.3$}] (11) at (1,-1.2) {};
			\draw[probbar] (11) -- ++(-0.2,0) -- ++(0,0.3) -- ++(0.2,0) -- (11) ;
			
			\node[bullet, {label=[probnum]45:$0$}] (02) at (0,-2.3) {};
			\draw[probbar] (02) -- ++(-0.2,0) -- ++(0,0) -- ++(0.2,0) -- (02) ;
			\node[bullet, {label=[probnum]45:$0$}] (12) at (1,-2.7) {};
			\draw[probbar] (12) -- ++(-0.2,0) -- ++(0,0) -- ++(0.2,0) -- (12) ;
			
			\node[bullet, label=-90:$1$, {label=[probnum]46:$0.6$}] (1) at (6,0.5) {};
			\draw[probbar] (1) -- ++(-0.2,0) -- ++(0,0.6) -- ++(0.2,0) -- (1) ;
			\node[bullet, label=-90:$2$, {label=[probnum]46:$0.4$}] (2) at (6,-1) {};
			\draw[probbar] (2) -- ++(-0.2,0) -- ++(0,0.4) -- ++(0.2,0) -- (2) ;
			
			\node[inner xsep=1cm, inner ysep=0.5cm, draw=black,fit=(00) (12),label=left:$X$] {} ;
			\node[inner sep=0.3cm, draw=black,fit=(00) (10),label=left:$A_1$] (a1) {} ;
			\node[inner sep=0.3cm, draw=black,fit=(01) (11),label=left:$A_2$] (a2) {} ;
			\node[inner sep=0.3cm, draw=black,fit=(02) (12),label=left:$A_3$] (a3) {} ;
			\node[inner sep=0.6cm, draw=black,fit=(1) (2),label=right:${[k]}$] {} ;
			
			\draw[ar] (00) to[bend left=10] (1) ;
			\draw[ar] (10) to[bend left=10] (1) ;
			\draw[ar] (01) to[bend left=10] (2) ;
			\draw[ar] (11) to[bend left=10] (2) ;
			\draw[ar] (02) to[bend right=15] (2) ;
			\draw[ar] (12) to[bend right=15] (2) ;
			
			\draw[<-,dotted] (a1) to[bend right=15] (1) ;
			\draw[<-,dotted] (a2) to[bend right=15] (2) ;
		\end{tikzpicture}
	\end{center}

	Finally, here is a sufficient for when two sub-$\sigma$-algebras of the same probability space are isomorphic as objects of $\cat{PS(Stoch)}$.
	
	\begin{proposition}\label{sigmaiso}
		Let $(X,\Sigma,p)$ be a standard Borel space. 
		Let $\Sigma_1$ and $\Sigma_2$ be sub-$\sigma$-algebras of $\Sigma$, and suppose that $\Sigma_2\subseteq\Sigma_1$. Denote the restrictions of $p$ to $\Sigma_1$ and $\Sigma_2$ again by $p$.
		Suppose now that for every measurable subset $A_1\in\Sigma_1$ there exists a measurable $A_2\in\Sigma_2$ with $p(A_1\setminus A_2)=p(A_2\setminus A_1)=0$
		Then the deterministic kernel $(X,\Sigma_1,p)\to(X,\Sigma_2,p)$ induced by the set-theoretic identity is part of an isomorphism of $\cat{PS(Stoch)}$.
	\end{proposition}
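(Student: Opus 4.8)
The plan is to exhibit an explicit inverse for the deterministic kernel $i\colon(X,\Sigma_1,p)\to(X,\Sigma_2,p)$ induced by the set-theoretic identity (so $i(B|x)=1_B(x)$ for $B\in\Sigma_2$) and to recognize it as the Bayesian inverse of $i$. Since $i$ is genuinely deterministic, \Cref{dag id} hands us one of the two composites for free; the real content lies in the other composite, which is precisely where the hypothesis is used.

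First I would construct the candidate inverse $j\colon(X,\Sigma_2,p)\to(X,\Sigma_1,p)$. Because $(X,\Sigma)$ is standard Borel, the regular conditional probability $P(-\mid\Sigma_2)$ exists as a Markov kernel $(X,\Sigma_2)\to(X,\Sigma)$; restricting its target $\sigma$-algebra to $\Sigma_1\subseteq\Sigma$ yields $j$, so that $j(A|x)=\mathbb{E}[1_A\mid\Sigma_2](x)$ for $A\in\Sigma_1$. A one-line computation $\int_X j(A|x)\,p(dx)=p(A)$ shows $j$ is measure-preserving, and the defining property of conditional expectation, $\int_B \mathbb{E}[1_A\mid\Sigma_2]\,dp=p(A\cap B)$ for $B\in\Sigma_2$, shows that $j$ satisfies the Bayesian inversion equation for $i$; that is, $j=i^\dag$.

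Next I would compute the two composites. For $i\circ j$: since $i$ merely evaluates a $\Sigma_2$-measurable indicator, $(i\circ j)(B|x)=\mathbb{E}[1_B\mid\Sigma_2](x)=1_B(x)$ for $B\in\Sigma_2$, so $i\circ j=\id_{(X,\Sigma_2)}$ on the nose (consistent with \Cref{dag id}). For $j\circ i$: because $i(-\mid x)$ is the Dirac measure at $x$ on $(X,\Sigma_2)$, we get $(j\circ i)(A|x)=j(A|x)=\mathbb{E}[1_A\mid\Sigma_2](x)$ for $A\in\Sigma_1$. Here the hypothesis enters decisively: given $A\in\Sigma_1$, choose $A_2\in\Sigma_2$ with $p(A\setminus A_2)=p(A_2\setminus A)=0$, so $1_A=1_{A_2}$ $p$-a.s.; then $\mathbb{E}[1_A\mid\Sigma_2]=\mathbb{E}[1_{A_2}\mid\Sigma_2]=1_{A_2}=1_A$ $p$-almost surely, since $1_{A_2}$ is already $\Sigma_2$-measurable. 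Hence $(j\circ i)(A|x)=1_A(x)$ for $p$-a.e.\ $x$, i.e.\ $j\circ i\aseq_p\id_{(X,\Sigma_1)}$. Thus $i$ and $j$ are mutually inverse in $\cat{PS(Stoch)}$, and $i$ is part of an isomorphism.

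The main obstacle is not any single calculation but ensuring the candidate inverse is a bona fide Markov kernel: this is exactly why standard Borelness of $(X,\Sigma)$ is invoked, namely to guarantee a regular version of $\mathbb{E}[-\mid\Sigma_2]$, and it is the only place where more than elementary measure theory is needed. The hypothesis that every $\Sigma_1$-set agrees mod zero with a $\Sigma_2$-set is what collapses the conditional expectation back to the original indicator, and it is used solely—but essentially—in the final a.s.\ identification; the inclusion $\Sigma_2\subseteq\Sigma_1$ makes the reverse agreement automatic.
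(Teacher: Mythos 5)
Your proposal is correct and takes essentially the same approach as the paper: both construct the inverse as the conditional-expectation kernel $A\mapsto\E[1_A\mid\Sigma_2]$ (available as a genuine Markov kernel because $(X,\Sigma)$ is standard Borel), dispose of the composite on $(X,\Sigma_2,p)$ via \Cref{dag id} / Bayesian inversion, and use the mod-zero agreement hypothesis to collapse $\E[1_{A_1}\mid\Sigma_2]$ back to $1_{A_1}$ almost surely. The only inaccuracy is the claim that $i\circ j=\id$ holds ``on the nose'': a regular conditional probability kernel satisfies $j(B|x)=1_B(x)$ for $B\in\Sigma_2$ only for $p$-almost every $x$ (properness can fail on a null set), but since equality in $\cat{PS(Stoch)}$ is almost-sure equality, this does not affect the conclusion.
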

	
	\begin{proof}
		Recall that the identity $(X,\Sigma_1,p)\to(X,\Sigma_2,p)$ defines the kernel given as follows, for $x_1\in X$ and $A_2\in\Sigma_2$:
		$$
		k_\id(A_2|x) = 1_{A_2}(x) = \begin{cases}
			1 & x\in A_2 ; \\
			0 & x\notin A_2 .
		\end{cases}
		$$
		In general, this identity kernel is a kernel $(X,\Sigma,p)\to (X,\Sigma_2,p)$, i.e.~it is $\Sigma_2$-measurable.
		Moreover, since $\Sigma_2\subseteq\Sigma_1$ this assignment is also $\Sigma_1$-measurable in $x$.
		Therefore this gives a kernel $k_\id:(X,\Sigma_1,p)\to(X,\Sigma_2,p)$. Similarly, we also obtain a kernel $k'_\id:(X,\Sigma,p)\to(X,\Sigma_2,p)$.
		Since $(X,\Sigma)$ is standard Borel, it has disintegrations, and so the kernel $k'_\id:(X,\Sigma,p)\to(X,\Sigma_2,p)$ admits a Bayesian inverse $(k'_\id)^\dag:(X,\Sigma_2,p)\to(X,\Sigma,p)$ (see \Cref{appendix}). Restricting it to the sets of $\Sigma_1$ (or postcomposing it with the kernel $(X,\Sigma)\to(X,\Sigma_1)$ induced again by the identity) we get a Bayesian inverse $(k_\id)^\dag:(X,\Sigma_2,p)\to(X,\Sigma_1,p)$ of $k_\id$, which is in the following form (almost surely):
		$$
		(k_\id)^\dag(A_1|x) = \E[1_A|\Sigma_2](x) ,
		$$
		where $\E$ denotes conditional expectation.
		By \Cref{dag id}, since $k_\id$ is deterministic, $k_\id\circ (k_\id)^\dag\simeq\id_{(X,\Sigma_2,p)}$. 
		In order to have an isomorphism, we need also that $(k_\id)^\dag\circ k_\id\simeq\id_{(X,\Sigma_1,p)}$. 
		
		Suppose now that for every measurable subset $A_1\in\Sigma_1$ there exists an $A_2\in\Sigma_2$ with $p(A_1\setminus A_2)=p(A_2\setminus A_1)=0$. This way, $1_{A_1}$ and $1_{A_2}$ are $p$-almost surely equal, and the same is true for their conditional expectations.
		Denote now by $A$ the set where $\E[1_{A_1}|\Sigma_2]$ and $ \E[1_{A_2}|\Sigma_2]=1_{A_2}$ agree. Note that it is in $\Sigma_2$, and it has $p$-measure one.
		Since $k_\id$ is measure-preserving,
		$$
		1 = p(A) = \int_{X} k_\id(A|x)\,p(dx) ,
		$$
		which means that there is a set $A'\in\Sigma_1$ of $p$-measure $1$ such that for all $x\in A'$, $k_\id(A|x)=1$. 
		Therefore for $A_1\in\Sigma_1$, for all $x\in A'$,
		\begin{align*}
			(k_\id)^\dag\circ k_\id\,(A_1|x) &= \int_X \E[1_{A_1}|\Sigma_2](x')\,k_\id\,(dx'|x) \\
			&= \int_A \E[1_{A_1}|\Sigma_2](x')\,k_\id\,(dx'|x) \\
			&= \int_A 1_{A_2}(x')\,k_\id\,(dx'|x) \\
			&= \int_X 1_{A_2}(x')\,k_\id\,(dx'|x) \\
			&= k_\id\,(A_2|x) \\
			&= 1_{A_2}(x)
		\end{align*}
		and since $1_{A_1}$ and $1_{A_2}$ are $p$-almost surely equal, we obtain that $(k_\id)^\dag\circ k_\id$ is $p$-almost surely equal to the identity kernel. Therefore $k_\id$ is an isomorphism.  
	\end{proof}
	
	\subsection{Dynamical systems and Markov chains}\label{dynsys}
	
	The other main structure we consider in this work is a \emph{dynamical system}, which we write as a diagram (i.e.~as a functor), and over which we will take limits and colimits. 
	As we will see, dynamical systems in $\cat{PS(Stoch)}$ generalize stationary Markov chains.
	
	\begin{definition}
		Let $\cat{A}$ be a category. 
		A \emph{dynamical system} in $\cat{A}$ consists of 
		\begin{itemize}
			\item An object $X$ of $\cat{A}$, on which intuitively the dynamics takes place;
			\item A monoid $M$ (seen as a one-object category), which we can think of as ``time'' or as indexing the dynamics, usually $\N$, $\R$, or a group;
			\item A functor $D:M\to\cat{A}$ which maps the single object of $M$ to $X$, and the elements (morphisms) of $M$ to morphisms $X\to X$ preserving identity and composition.
		\end{itemize}
		For each $m\in M$, we will denote the corresponding induced morphism $X\to X$ again by $m$. 
	\end{definition}
	
	Every single endomorphism $f:X\to X$ generates a ``discrete-time'' dynamical system, indexed by the natural numbers $\N$. Indeed, for each $n\in N$ we can take the $n$-fold application of $f$ with itself (and the identity for $n=0$):
	$$
	\begin{tikzcd}[column sep=small]
		X \ar{r}{f^n} & X & = &  X \ar{r}{f} & X \ar{r}{f} & \dots \ar{r}{f} & X
	\end{tikzcd}
	$$
	Moreover, every $\N$-indexed dynamical system arises in this way, that is, it is generated by the map corresponding to $1\in\N$.
	
	\begin{example}
		Let $M=\N$.
		\begin{itemize}
			\item If $\cat{A}$ is the category $\cat{Set}$ of sets and functions, a dynamical system in $\cat{A}$ is simply an endofunction $f:X\to X$.
			\item If $\cat{A}$ is the category $\cat{Top}$ of topological spaces and continuous map, a dynamical system in $\cat{A}$ is a topological dynamical system. (One can even take spaces with more structure, such as compact metric spaces.)
			\item If $\cat{A}$ is the category $\cat{Meas}$ of measurable spaces and measurable maps, a dynamical system in $\cat{A}$ is a measurable dynamical system.
			\item If $\cat{A}$ is the category $\cat{Stoch}$ of measurable spaces and Markov kernels, a dynamical system in $\cat{A}$ is a stochastic dynamical system.
			\item If $\cat{A}$ is the category $\cat{ProbMeas}$ of measure spaces and measure-preserving functions, a dynamical system in $\cat{A}$ is a measure-preserving dynamical system.
		\end{itemize}
	\end{example}
	
	\begin{example}
		If $M$ is a group, similar considerations hold: it can act continuously, measurably, and so on, depending on the choice of category $\cat{A}$. 
		Keep in mind that, in this context, we are not taking the group to be an object of our category (a topological group, etc.).
		That idea can be modeled in a similar categorical way, but we will not do it in this work.
	\end{example}
	
	In this work we will in particular look at examples where $M$ is countable (though, as a monoid, it is not necessarily $\N$, see for example \Cref{definetti}). However, our formalism is relevant regardless of the cardinality of $M$.
	
	We now want to specialize to dynamical systems in $\cat{A}=\cat{PS(C)}$, where $\cat{C}$ is a causal Markov category, or even a category with conditionals. Our prototypes are $\cat{C}=\cat{Stoch}$ and $\cat{C}=\cat{BorelStoch}$. 
	In this context, we can see a dynamical system $D:M\to\cat{PS(C)}$ as a measure-preserving stochastic dynamical system. 
	In the presence of conditionals, and in discrete time, these are the same as stationary Markov chains:
	
	\begin{theorem}[\cite{hiddenmarkov}, Proposition 2.1]
		Let $\cat{C}$ be a Markov category with conditionals, such as $\cat{BorelStoch}$. 
		Let $X$ be an object of $\cat{C}$, let $X^\N$ be its $\N$-fold Kolmogorov product, and let $p$ be a state on $X^\N$. 
		Denote the $i$-th component of the product $X^\N$ by $X_i$.
		The following conditions are equivalent.
		\begin{enumerate}
			\item \emph{Local Markov property}: for all $i>0$, $p$ exhibits conditional independence of $X_{i+1}$ and $X_{i-1}$ given $X_i$;
			\item \emph{Global Markov property}: for all disjoint subsets $R,S,T\subseteq \N$ such that for all $r\in R$ and $t\in T$ there exists $s\in S$ with $r<s<t$ or $r>s>t$, $p$ exhibits conditional independence of $X_{R}$ and $X_{T}$ given $X_S$;
			\item There is a sequence of morphisms $(f_i:X_i\to X_{i+1})_{i\in \N}$ such that for each finitary joint distribution $p_{0,\dots,n}$ on  $X_0,\dots,X_n$ marginalizing $p$ can be written as follows,
			\ctikzfig{markov_chain}
			where $p_0$ is the marginal of $p$ on $X_0$, and for morphisms (in $\cat{BorelStoch}$, transition kernels) $f_i:X_i\to X_{i+1}$ defined up to almost-sure equality.
		\end{enumerate}
	\end{theorem}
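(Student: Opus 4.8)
The plan is to prove the cyclic chain of implications $(1)\Rightarrow(3)\Rightarrow(2)\Rightarrow(1)$, working throughout with the finite marginals $p_{0,\dots,n}$ rather than the full state on the Kolmogorov product; this keeps everything finitary, and the compatibility of these marginals is exactly what the Kolmogorov product guarantees. Two of the three links are light. The implication $(2)\Rightarrow(1)$ is immediate, since the local Markov property is the instance of the global one given by $R=\{i-1\}$, $S=\{i\}$, $T=\{i+1\}$: for the unique pair $r=i-1$, $t=i+1$ the index $s=i$ satisfies $r<s<t$, so $S$ separates $R$ from $T$. The implication $(3)\Rightarrow(2)$ follows from the d-separation theorem for Markov categories \cite{fritz2022dseparation}: once $p$ is written in the chain form of $(3)$, it is compatible with the directed path graph $0\to 1\to 2\to\cdots$, and the hypothesis on $R,S,T$ — that any $r\in R$ and $t\in T$ are separated by some $s\in S$ lying between them — is precisely the assertion that $S$ d-separates $R$ from $T$ in this path (every path between a node of $R$ and a node of $T$ runs monotonically through the chain and hence meets $S$). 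The d-separation criterion then delivers $X_R\perp X_T\mid X_S$, which is $(2)$.

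The substantive link is $(1)\Rightarrow(3)$. Since $\cat{C}$ has conditionals, for each $i$ I would take $f_i:X_i\to X_{i+1}$ to be a conditional of $X_{i+1}$ given $X_i$ computed from the marginal $p_{i,i+1}$; this is defined only up to $p$-almost sure equality, which is exactly the ambiguity allowed in the statement. I then argue by induction on $n$ that $p_{0,\dots,n}$ equals the chain consisting of $p_0$ followed by $f_0,\dots,f_{n-1}$, with small base cases ($n=0,1$) checked by hand. The inductive step passing from $p_{0,\dots,n}$ to $p_{0,\dots,n+1}$ requires the strong independence $X_{n+1}\perp (X_0,\dots,X_{n-1})\mid X_n$, so that the conditional of $X_{n+1}$ given the whole past factors through the projection to $X_n$ and can be replaced by $f_n$.

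The hard part, and the main obstacle, is upgrading the hypothesis $(1)$ — which supplies only the three-variable statements $X_{n+1}\perp X_{n-1}\mid X_n$ — to this ``independence from the entire past given the present''. I would handle it using the structure of conditional independence available in a causal, hence relatively positive, Markov category: the conditional-independence relation satisfies the semigraphoid axioms together with the intersection/positivity property. Combining the induction hypothesis (that $p_{0,\dots,n}$ is already a chain, so $X_n$ screens $(X_0,\dots,X_{n-2})$ from its successors) with the local statement at index $n$ and these graphoid manipulations yields the directed local Markov property $X_{n+1}\perp(X_0,\dots,X_{n-1})\mid X_n$, and hence the chain factorization of $p_{0,\dots,n+1}$. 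This is the step that genuinely uses positivity rather than mere existence of conditionals, and it is where I expect the bookkeeping to be most delicate; the alternative, invoking the equivalence of local and global Markov properties for path graphs in this categorical setting, packages the same content but still rests on the same positivity hypothesis.
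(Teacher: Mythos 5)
The paper never proves this statement---it is imported wholesale from the cited reference \cite{hiddenmarkov}---so your proposal has to stand on its own, and unfortunately its central step does not. The implications $(2)\Rightarrow(1)$ and $(3)\Rightarrow(2)$ are fine in outline, but $(1)\Rightarrow(3)$ contains an unfixable gap. First, the ``intersection/positivity'' graphoid axiom you invoke is simply not available: relative positivity of a causal Markov category is a statement about composites $g\circ f$ being (a.s.) deterministic, and has nothing to do with the intersection property of conditional independence, which is a property of an individual \emph{state} (classically requiring full support) and is not implied by any of the categorical axioms in play. Second, and decisively, no axiom of this kind could close the gap, because the implication is false under the three-variable reading of $(1)$ that you (following the paper's wording) adopt. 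Counterexample in $\cat{FinStoch}\subseteq\cat{BorelStoch}$: let $X=\{0,1\}$, let $X_0,X_1,X_2$ be i.i.d.\ fair bits, set $X_3=X_0\oplus X_1\oplus X_2$, and let $X_i=0$ deterministically for $i\ge 4$. Every adjacent-triple independence $X_{i+1}\perp X_{i-1}\mid X_i$ holds (for $i=2$: given $X_2$, the pair $(X_1,X_3)$ consists of two independent fair bits), yet $X_3$ is a deterministic function of $(X_0,X_1,X_2)$, so $X_3\perp(X_0,X_1)\mid X_2$ fails, $p$ admits no chain factorization, and taking $R=\{0,1\}$, $S=\{2\}$, $T=\{3\}$ also kills $(2)$. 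This refutes precisely your claimed inductive step at $n=2$: there the induction hypothesis holds ($X_0,X_1,X_2$ i.i.d.\ is trivially a chain) and the local statement at index $2$ holds, but the directed local property $X_3\perp(X_0,X_1)\mid X_2$ does not follow. Moreover, mixing $X_3$ with a little uniform noise gives a joint distribution of full support for which all of the above persists, so even the classical intersection axiom (which concerns positivity of the state, not of the category) cannot rescue the argument.

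The upshot is that the adjacent-triple reading of the local Markov property is too weak, and the theorem is only correct when $(1)$ is read as the standard directed local Markov property: $X_{i+1}$ is conditionally independent of the \emph{whole past} $(X_0,\dots,X_{i-1})$ given $X_i$. Under that reading your induction for $(1)\Rightarrow(3)$ goes through with no graphoid bookkeeping at all: using conditionals, write $p_{0,\dots,n+1}$ as $p_{0,\dots,n}$ followed by a conditional of $X_{n+1}$ given $(X_0,\dots,X_n)$; the local property says exactly that this conditional may be chosen, up to almost-sure equality, to factor through the projection onto $X_n$, which defines $f_n$. Separately, a smaller issue in your $(3)\Rightarrow(2)$: the d-separation criterion of \cite{fritz2022dseparation} is established for finite Bayesian networks, whereas $R,S,T$ here may be infinite subsets of $\N$, so one needs an additional limiting argument along the Kolmogorov product (or a direct string-diagrammatic verification on finite marginals followed by a passage to the limit) before that citation applies.
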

	
	In particular this defines a chain of composable morphisms in $\cat{PS(C)}$,
	$$
	\begin{tikzcd}
		(X_0,p_0) \ar{r}{f_1} & (X_1,p_1) \ar{r}{f_2} & \dots \ar{r}{f_n} & (X_n,p_n) \ar{r}{f_{n+1}} & \dots 
	\end{tikzcd}
	$$
	indexed for example by $(\N,\le)$ (seen as a poset, not as a monoid).
	
	In this work we are interested in the case where the $(X_i,p_i)$ are all equal. In the measure-theoretic setting this amounts to a \emph{stationary} Markov chain, i.e.~with an initial measure on our state space which is preserved by the transition kernels.
	
	\begin{remark}
		Recall that with \Cref{dag id} and \Cref{iso bayesian}, we have that if $\cat{C}$ has conditionals, every isomorphism is almost surely deterministic. Therefore, in that case, any dynamical system $D:M\to\cat{PS(C)}$ in which $M$ is a group is automatically acting in an almost surely deterministic way.
		(Similar considerations can be observed for dynamical systems $D:M\to\cat{C}$ if $C$ is a positive Markov category.)
	\end{remark}
	
	Let us now define invariant morphisms. 
	\begin{definition}
		Let $M$ be a monoid, and let $D:M\to\cat{A}$ be a dynamical system in $\cat{A}$ acting on the object $X$.
		\begin{itemize}
			\item A \emph{left-invariant morphism} for $D$ is a morphism $h:A\to X$ such that for all $m\in M$ we have $m\circ h=h$, i.e.~the following triangle commutes.
			$$
			\begin{tikzcd}[row sep=small]
				& X \ar{dd}{m} \\
				A \ar{ur}{h} \ar{dr}[swap]{h} \\
				& X 
			\end{tikzcd}
			$$
			\item A \emph{right-invariant morphism} for $D$ is a morphism $f:X\to Y$ such that for all $m\in M$ we have $f\circ m=f$, i.e.~the following triangle commutes.
			$$
			\begin{tikzcd}[row sep=small]
				X \ar{dd}[swap]{m} \ar{dr}{f} \\
				& Y \\
				X \ar{ur}[swap]{f}
			\end{tikzcd}
			$$
		\end{itemize}
	\end{definition}
	One can interpret left-invariant morphisms as parametrized invariant elements, and right-invariant functions as invariant functions or invariant observables. For further intuition, see \cite[Section~2.1]{moss2022ergodic}.
	
	If $\cat{A}=\cat{PS(C)}$, left- and right-invariant morphisms correspond to \emph{almost surely invariant kernels}: for the following diagrams to commute in $\cat{PS(C)}$, 
	$$
	\begin{tikzcd}[row sep=small]
		& (X,p) \ar{dd}{m} \\
		(A,b) \ar{ur}{h} \ar{dr}[swap]{h} \\
		& (X,p) 
	\end{tikzcd}
	\qquad\qquad
	\begin{tikzcd}[row sep=small]
		(X,p) \ar{dd}[swap]{m} \ar{dr}{f} \\
		& (Y,q) \\
		(X,p) \ar{ur}[swap]{f}
	\end{tikzcd}
	$$
	we need that $m\circ h \aseq_b h$ and $f\circ m\aseq_p f$, respectively.
	For $\cat{C}=\cat{BorelStoch}$, this means in particular that the diagrams commute on a subset of measure one. 
	This set in general depends on $m$. For example, for the case of right-invariant functions, we are saying the following:
	\begin{itemize}
		\item For every $m\in M$ there exists a subset $B_m\subseteq X$ of full measure such that for all $x\in B_m$ and all measurable $A\subseteq X$, $fm(A|x) = f(A|x)$.
	\end{itemize}
	Here $A_m$ may depend on $m$. This is in general different from the following, stronger condition:
	\begin{itemize}
		\item There exists a subset $B\subseteq X$ of full measure such that for all $m$, all $x\in B$ and all measurable $A\subseteq X$, $fm(A|x) = f(A|x)$.
	\end{itemize}
	Under some conditions on $M$, for example if it is countable, these two conditions coincide, since one can take $B=\bigcap_{m\in M} B_m$, which has again measure one. 
	This is the case in which we are the most interested (but our formalism works in general). 
	
	For diagrams, we will use the following convention: if we write the objects of the diagram in the form $X$, $Y$, et cetera, then we consider it as a diagram of $\cat{C}$, and so we say that it commutes if it commutes strictly, i.e.~if any two paths with the same endpoints have equal compositions. 
	If instead we write the objects as $(X,p)$, $(Y,q)$, et cetera, then we are considering is as a diagram of $\cat{PS(C)}$, and so we say that it commutes if it commutes almost surely, i.e.~if any two paths between endpoints $(X,p)$ and $(Y,q)$ are such that their compositions are $p$-almost surely equal.

	\section{Main results}\label{main}
	
	\subsection{The invariant $\sigma$-algebra as a colimit}\label{colimits}
	
	We will now define \emph{invariant objects} as particular colimits.
	They can be thought of as ``spaces of orbits up to indistinguishability and up to measure zero''. For additional intuition (without the measure zero part), see \cite[Section~2.1 and Appendix~A]{moss2022ergodic}.
	For $\cat{Stoch}$, they are given by the (almost surely)~invariant $\sigma$-algebra, as we show.
	
	\begin{definition}\label{defxinv}
		Let $\cat{C}$ be a causal Markov category. 
		Let $M$ be a monoid, and let $D:M\to\cat{PS(C)}$ be a dynamical system acting on $(X,p)$.
		An \emph{invariant object} for $D$ is an object $(X_\inv,p_\inv)$ together with a map $r:(X,p)\to(X_\inv,p_\inv)$ of $\cat{PS(C)}$ with the following properties.
		\begin{itemize}
			\item It is a colimit of $D$ in $\cat{PS(C)}$: for every $p$-almost surely invariant morphism $f:(X,p)\to (Y,q)$ there is (up to $p_\inv$-almost sure equality) unique morphism $\tilde{f}:(X_\inv,p_\inv)\to (Y,q)$ making the following diagram commute:
			\[\begin{tikzcd}
				{(X, p)} \\
				& {(X_\inv, p_\inv)} && {(Y, q)} \\
				{(X, p)}
				\arrow["r"', shift right, from=1-1, to=2-2]
				\arrow["r", shift left, from=3-1, to=2-2]
				\arrow["m"', from=1-1, to=3-1]
				\arrow["f", curve={height=-12pt}, from=1-1, to=2-4]
				\arrow["f"', curve={height=12pt}, from=3-1, to=2-4]
				\arrow["{\tilde{f}}"', dashed, from=2-2, to=2-4]
			\end{tikzcd}\]
			\item In the diagram above we have that $f$ is $p$-almost surely deterministic if and only if $\tilde{f}$ is $p_\inv$-almost surely deterministic.
		\end{itemize}
	\end{definition}
	
	In terms of dagger categories, this definition implies in particular that an invariant object $(X_\inv,p_\inv)$ is a \emph{dagger colimit} (see \cite[Chapter~4]{wayofdagger} for the dual case of dagger limits). 
	From this definition it also follows that invariant objects are defined not just up to a unique isomorphism, but up to a unique \emph{dagger isomorphism} compatible with the cones. (See again the reference above for more context.)
	
	We want to show the existence of invariant objects in $\cat{PS(Stoch)}$ (and in \Cref{xinvborel}, we will show that we can extend this result to $\cat{PS(Borel)}$).
	In $\cat{PS(Stoch)}$, the space $X_\inv$ is defined in terms of the (almost surely) invariant $\sigma$-algebra, which we define now.
	
	\begin{definition}
		Let $p$ be a probability measure on $X$.
		Let $M$ be a monoid, and let $D:M\to\cat{PS(Stoch)}$ be a dynamical system acting on the object $(X,p)$ (in a stochastic, measure-preserving way).
		
		A measurable subset $A\subseteq X$ is called \emph{invariant} if for every $m\in M$ we have that the equation
		$$
		m(A|x) = 1_A(x) = \begin{cases}
			1 & x\in A ;\\
			0 & x\notin A
		\end{cases}
		$$
		holds $p$-almost surely, i.e.~for every $B\in\Sigma_X$,
		$$
		\int_B m(A|x) \,p(dx) = \int_B 1_A(x)\,p(dx) = p(A\cap B) .
		$$
	\end{definition}

	Note that this definition involves invariance \emph{almost surely}, and it is different from strict invariance as defined for example in \cite[Definition~3.6]{moss2022ergodic}, although they are equivalent in some cases (see the next \Cref{otherinv}).
	It is equivalent to say that the equation $m(A|x) = 1_A(x)$ holds on a set of measure one.
	Just as for the case of invariant morphisms, this definition quantifies over $m$ in the following way:
	\begin{itemize}
		\item For every $m$ there exists a subset $B_m\subseteq X$ of full measure such that for all $x\in B_m$, $m(A|x) = 1_A(x)$.
	\end{itemize}
	This is in general different from:
	\begin{itemize}
		\item There exists a subset $B\subseteq X$ of full measure such that for all $m$ and all $x\in B$, $m(A|x) = 1_A(x)$.
	\end{itemize}
	Again, under some conditions, such as if $M$ is countable, these two conditions coincide. 
	
	\begin{proposition}
		Invariant sets as defined above form a $\sigma$-algebra.
	\end{proposition}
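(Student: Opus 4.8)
The plan is to verify directly the three defining closure properties for the collection $\mathcal{I}$ of invariant sets: that $X\in\mathcal{I}$, that $\mathcal{I}$ is closed under complementation, and that $\mathcal{I}$ is closed under countable unions. The single fact I would lean on throughout is that, since $m$ is a Markov kernel, each $m(-\mid x)$ is a genuine probability measure on $X$, so that monotonicity, the identity $m(A^c\mid x)=1-m(A\mid x)$, and countable subadditivity are all available. Combined with the hypothesis $m(A\mid x)=1_A(x)$, which forces $m(-\mid x)$ to take the value $0$ or $1$ on each invariant set, these are exactly the tools one needs.

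First I would dispose of $X$: for every $x$ and every $m$ we have $m(X\mid x)=1=1_X(x)$ by normalisation of the kernel, so the defining equation holds everywhere (not merely almost surely) and $X\in\mathcal{I}$. For complementation, fix $A\in\mathcal{I}$ and $m\in M$; by hypothesis there is a full-measure set on which $m(A\mid x)=1_A(x)$, and on that same set $m(A^c\mid x)=1-m(A\mid x)=1-1_A(x)=1_{A^c}(x)$, so $A^c\in\mathcal{I}$.

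The substantive case is countable unions. Given invariant sets $A_1,A_2,\dots$ with union $A=\bigcup_n A_n$, I would fix $m\in M$ and, for each $n$, choose a full-measure set $B_n$ on which $m(A_n\mid x)=1_{A_n}(x)$; then $B\coloneqq\bigcap_n B_n$ still has measure one. On $B$ the two cases split cleanly: if $x\in A$ then $x\in A_{n_0}$ for some $n_0$, whence $m(A\mid x)\ge m(A_{n_0}\mid x)=1$ by monotonicity, giving $m(A\mid x)=1=1_A(x)$; if $x\notin A$ then $m(A_n\mid x)=0$ for all $n$, whence $m(A\mid x)\le\sum_n m(A_n\mid x)=0$ by countable subadditivity, giving $m(A\mid x)=0=1_A(x)$. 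Hence $m(A\mid x)=1_A(x)$ holds on $B$, and since $m$ was arbitrary, $A\in\mathcal{I}$.

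The one point requiring care — and the reason the statement is not completely trivial — is the order of quantifiers, exactly the subtlety the text highlights just before the statement. In the countable-union step the full-measure sets $B_n$ depend on both $m$ and $n$, and passing to $B=\bigcap_n B_n$ is legitimate only because the intersection ranges over the countably many indices $n$ \emph{after} $m$ has been fixed; at no stage does one intersect over $M$, so no countability assumption on $M$ is needed. I expect this bookkeeping to be the main (and essentially the only) obstacle; the measure-theoretic inequalities themselves are routine consequences of $m(-\mid x)$ being a probability measure.
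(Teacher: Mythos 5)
Your proof is correct: the paper states this proposition without proof, treating it as a routine verification, and your argument (normalisation of the kernel for $X$, complementation via $m(A^c\mid x)=1-m(A\mid x)$ on the same full-measure set, and countable unions via monotonicity and countable subadditivity of the probability measures $m(-\mid x)$ on a countable intersection of full-measure sets) is exactly the standard one it takes for granted. Your handling of the quantifiers --- fixing $m\in M$ first and intersecting exceptional sets only over the countably many indices $n$, never over $M$ --- is precisely the subtlety the paper highlights before the statement, and you resolve it correctly, so no countability assumption on $M$ is needed.
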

	
	We call this $\sigma$-algebra the \emph{invariant $\sigma$-algebra}.
	
	Note that if $M$ acts via measure-preserving functions $m:(X,\Sigma,p)\to(X,\Sigma,p)$, the invariance condition can be equivalently stated as the fact that 
	$$
	1_A = 1_{m^{-1}(A)} \qquad p\mbox{-almost surely} ,
	$$
	or also equivalently, that $A$ and $m^{-1}(A)$ only differ by a measure zero set.

	As the next proposition shows, for some countable monoids acting deterministically, this $\sigma$-algebra and that of strictly invariant sets are indistinguishable in $\cat{PS(Stoch)}$.
	
	\begin{proposition}\label{otherinv}
		Let $M$ be either a countable group or a countable commutative monoid, acting deterministically via measure-preserving functions $m:(X,\Sigma,p)\to(X,\Sigma,p)$.
		Denote by $\Sigma_\inv$ the $\sigma$-algebra of almost surely invariant sets, as constructed above, and let $\Sigma'_\inv$ be the $\sigma$-algebra of \emph{strictly} invariant sets, i.e.~those sets $A$ for which strictly, and not just almost surely,
		$$
		m^{-1}(A) = A .
		$$
		Then $(X,\Sigma_\inv,p_\inv)$ and $(X,\Sigma'_\inv,p'_\inv)$ are isomorphic in $\cat{PS(Stoch)}$.
	\end{proposition}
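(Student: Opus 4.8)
The plan is to realize the desired isomorphism as the identity-induced kernel supplied by \Cref{sigmaiso}. First I would record the easy inclusion $\Sigma'_\inv \subseteq \Sigma_\inv$: a strictly invariant set $A$ (with $m^{-1}(A) = A$ for all $m$) is in particular almost surely invariant, since $m^{-1}(A) = A$ gives $1_{m^{-1}(A)} = 1_A$ everywhere, hence $p$-almost surely. Thus one may take $\Sigma_1 = \Sigma_\inv$ and $\Sigma_2 = \Sigma'_\inv$ in \Cref{sigmaiso}, and the whole statement reduces to its approximation hypothesis: every almost surely invariant set $A$ must be matched, modulo a $p$-null set, by a strictly invariant set $A'$.

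Before constructing $A'$ I would reduce to a genuine (strict) action. The functor $D : M \to \cat{PS(Stoch)}$ only guarantees $m \circ n \aseq_p mn$ and $e \aseq_p \id$ for the chosen representative functions, so the set-level manipulations below would otherwise fail off null sets. Since $M$ is countable, intersecting the countably many conull sets on which these relations hold and saturating under the action yields a conull Borel set $X_0$, invariant under the action, on which the representatives compose strictly (and, in the group case, act by bijections); by \Cref{isomodzero} passing to $X_0$ leaves the objects in play unchanged up to isomorphism, so we may assume from now on that the action is strict.

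The heart of the argument is then an explicit construction of $A'$, treating the two cases of the hypothesis separately. In the group case set $A' := \bigcap_{m \in M} m^{-1}(A)$; strict invariance follows from $n^{-1}(A') = \bigcap_{m} (mn)^{-1}(A) = \bigcap_{m' \in Mn} (m')^{-1}(A) = A'$, using $Mn = M$ for a group, while $A' = A$ modulo a null set because each $m^{-1}(A) = A$ mod $0$ and $M$ is countable. In the commutative monoid case the plain intersection is only forward invariant ($n^{-1}(A') \supseteq A'$), so instead I would take the ``$\limsup$ along the monoid''
\[
A' := \bigcap_{m \in M}\ \bigcup_{k \in M} (km)^{-1}(A),
\]
which for $M = \N$ generated by $f$ is exactly $\limsup_j f^{-j}(A)$. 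Here strict invariance uses commutativity: for $l \in M$ one computes $l^{-1}(A') = \bigcap_{m} \bigcup_{k} (kml)^{-1}(A) = \bigcap_{m' \in Ml} \bigcup_{k} (km')^{-1}(A)$, and the inclusions $Ml \subseteq M$ together with the rewriting $kml = klm$ force this to equal $A'$ in both directions; the identity $A' = A$ mod $0$ again follows from countability and the measure-preservation of each $km$. With $A'$ in hand, \Cref{sigmaiso} applies and produces the required isomorphism $(X,\Sigma_\inv,p_\inv) \to (X,\Sigma'_\inv,p'_\inv)$.

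I expect the main obstacle to be precisely the mismatch between the strict, set-theoretic equalities demanded by $\Sigma'_\inv$ and the merely almost-sure composition law of the action: the strictification step is what legitimizes the clean formulas for $A'$, and some care is needed to guarantee that $X_0$ is genuinely invariant (not just forward invariant) and that strict invariance on $X_0$ transports back to strict invariance on $X$ for the chosen representatives. The verification that the monoid construction is strictly, and not merely forward, invariant is the other delicate point, and it is exactly there that the commutativity hypothesis is indispensable.
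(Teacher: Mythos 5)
Your core argument is exactly the paper's proof: reduce via \Cref{sigmaiso} to showing that every almost surely invariant set $A$ agrees mod zero with some strictly invariant set, and then take $A' = \bigcap_{m}\bigcup_{k}(km)^{-1}(A)$, verifying strict invariance by the inclusion $M\ell\subseteq M$ (with equality in the group case) and by the commutativity rewriting in the monoid case; the paper simply uses this one formula, written as $\bigcap_{m\in M} m^{-1}\bigl(\bigcup_{n\in M} n^{-1}(A)\bigr)$, to handle both cases at once, rather than splitting off the simpler group construction as you do. The one genuine deviation is your ``strictification'' preamble, and it is both unnecessary and the only place your argument has a hole: the hypothesis that $M$ acts \emph{via measure-preserving functions} means the action is a genuine functor into measure spaces and functions, composing strictly --- indeed $\Sigma'_\inv$ is only a well-defined, representative-independent notion for such an action --- so there are no almost-sure composition defects to repair. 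Had the reduction actually been needed, the gap you yourself flag (a set strictly invariant for the restricted action on $X_0$ need not come from a set in $\Sigma'_\inv$, i.e.\ strictly invariant on all of $X$, and the invariance of $X_0$ itself is unclear when composition only holds almost surely) would be a real problem; since the step can simply be deleted, the proof stands as the paper's.
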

	
	The strictly invariant $\sigma$-algebra, outside the case where it's equivalent to the a.s.\ invariant one, will not be considered in this work.
	
	\begin{proof}
		Notice that every strictly invariant set is almost surely invariant.
		Therefore, by \Cref{sigmaiso}, it suffices to show that given any almost surely invariant set $A$ there exists a strictly invariant set $A'$ such that $p(A\setminus A')=p(A'\setminus A)=0$. 
		So let $A$ be almost surely invariant.
		Note that since $M$ acts deterministically, the fact that $A$ is almost surely invariant can be restated as the fact that $1_A$ and $1_{m^{-1}(A)}$ are $p$-almost surely equal, or equivalently that $A$ and $m^{-1}(A)$ differ only by a measure zero set. Therefore the set 
		$$
		A' \coloneqq \bigcap_{m\in M} m^{-1}\left( \bigcup_{n\in M} n^{-1}(A) \right) ,
		$$
		also differs from $A$ only by a measure zero set.
		To see that $A'$ is strictly invariant, notice first of all that for $\ell\in M$, 
		\begin{align*}
			\ell^{-1}(A') &= \bigcap_{m\in M} (\ell m)^{-1}\left( \bigcup_{n\in M} n^{-1}(A) \right) \\
			&\supseteq \bigcap_{m\in M} m^{-1}\left( \bigcup_{n\in M} n^{-1}(A) \right) = A' ,
		\end{align*}
		since $lm\in M$, and so the intersection is taken over a subset of $M$. 
		If $M$ is a group, the subset inclusion is actually an equality.
		Similarly, if $M$ is commutative, 
		\begin{align*}
			\ell^{-1}(A') &= \bigcap_{m\in M} (\ell m)^{-1}\left( \bigcup_{n\in M} n^{-1}(A) \right) \\
			&= \bigcap_{m\in M} (m\ell)^{-1}\left( \bigcup_{n\in M} n^{-1}(A) \right) \\
			&= \bigcap_{m\in M} (m)^{-1}\left( \bigcup_{n\in M} (\ell n)^{-1}(A) \right) \\
			&\subseteq \bigcap_{m\in M} (m)^{-1}\left( \bigcup_{n\in M} n^{-1}(A) \right) = A',
		\end{align*}
		since the union is taken over a subset of $M$.
	\end{proof}

	The main statement of this section is the following.
	
	\begin{theorem}\label{xinvexists}
		Invariant objects $X_\inv$ in $\cat{PS(Stoch)}$ exist, and are given by (the underlying set of) $X$ equipped with the invariant $\sigma$-algebra. 
	\end{theorem}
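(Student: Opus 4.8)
The plan is to take $(X_\inv,p_\inv)$ to be the underlying set of $X$ equipped with the invariant $\sigma$-algebra $\Sigma_\inv$ and the restriction $p_\inv$ of $p$ to $\Sigma_\inv$, together with the deterministic ``coarse-graining'' kernel $r:(X,p)\to(X_\inv,p_\inv)$ given by $r(A|x)=1_A(x)$ for $A\in\Sigma_\inv$ (induced by the set-theoretic identity, which is measurable since $\Sigma_\inv\subseteq\Sigma_X$). I first check that $r$ is a cocone over $D$, i.e.\ $r\circ m\aseq_p r$ for every $m\in M$: for $A\in\Sigma_\inv$ one computes $(r\circ m)(A|x)=\int_X 1_A(x')\,m(dx'|x)=m(A|x)$, which equals $1_A(x)=r(A|x)$ for $p$-almost every $x$ precisely because $A$ is an invariant set. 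It then remains to verify the two clauses of \Cref{defxinv}: the colimit universal property and the compatibility with determinism.

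The heart of the argument is the existence half of the universal property. Given a right-invariant morphism $f:(X,p)\to(Y,q)$, so that $f\circ m\aseq_p f$, I must factor it through $r$. Writing $g_B(x):=f(B|x)$, the invariance says exactly that $P_m g_B=g_B$ holds $p$-almost surely, where $(P_m g)(x)=\int_X g(y)\,m(dy|x)$ is the Markov operator of $m$. Using that $m$ is measure-preserving ($mp=p$) together with this fixed-point equation, a direct expansion yields the vanishing of the Dirichlet form
$$\int_X\int_X\bigl(g_B(y)-g_B(x)\bigr)^2\,m(dy|x)\,p(dx)=\|g_B\|_{L^2(p)}^2-2\langle g_B,P_m g_B\rangle+\|g_B\|_{L^2(p)}^2=0,$$
where the two outer terms use stationarity and the middle term uses $P_m g_B=g_B$. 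Hence for $p$-almost every $x$ the function $g_B$ is $m(\cdot|x)$-almost surely constant and equal to $g_B(x)$; consequently every level set $\{g_B\le c\}$ satisfies $m(\{g_B\le c\}|x)=1_{\{g_B\le c\}}(x)$ $p$-a.s., i.e.\ it is invariant. Thus $g_B=f(B|\cdot)$ is $p$-almost surely $\Sigma_\inv$-measurable for every $B$. Taking $\Sigma_\inv$-measurable versions (equivalently, conditional expectations $\E[f(B|\cdot)\mid\Sigma_\inv]$) produces a kernel $\tilde f:(X_\inv,p_\inv)\to(Y,q)$ with $\tilde f\aseq_p f$; and since $\tilde f$ is $\Sigma_\inv$-measurable while $r$ is the coarse-graining, one has the exact identity $(\tilde f\circ r)(B|x)=\tilde f(B|x)$, so $\tilde f\circ r\aseq_p f$ as required.

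For uniqueness, the same identity $\tilde f\circ r=\tilde f$ (valid for any $\Sigma_\inv$-measurable kernel) shows that if $\tilde f\circ r\aseq f\aseq\tilde g\circ r$ then $\tilde f\aseq\tilde g$, so $r$ plays the role of the required epimorphism. The determinism clause of \Cref{defxinv} follows from $\tilde f\aseq_p f$: since $r$ is deterministic and a.s.-deterministic morphisms both compose and are stable under a.s.\ equality, $f\aseq\tilde f\circ r$ is a.s.\ deterministic whenever $\tilde f$ is, and conversely, if $f$ is a.s.\ deterministic then the values $\tilde f(B|x)=f(B|x)$ lie in $\{0,1\}$ for $p$-almost every $x$, so $\tilde f$ is a.s.\ deterministic.

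The step I expect to be most delicate is not the pointwise-in-$B$ measurability, which the Dirichlet-form computation settles cleanly (each level set lies in $\Sigma_\inv$ because it is $m$-invariant for \emph{every} $m$, which is exactly the definition of $\Sigma_\inv$), but rather the assembly of the versions $\{\E[f(B|\cdot)\mid\Sigma_\inv]\}_{B\in\Sigma_Y}$ into a single honest Markov kernel $\tilde f$ out of $(X,\Sigma_\inv)$: the null set on which $f(B|\cdot)$ fails to be $\Sigma_\inv$-measurable depends on $B$, so countable additivity in $B$ of the chosen versions must be secured simultaneously. This is precisely the point where one needs a regular version (a regular conditional distribution, equivalently a disintegration along the deterministic map $r$); it is available when the target is standard Borel, which is what will let the construction descend to $\cat{PS(Borel)}$ in \Cref{xinvborel}.
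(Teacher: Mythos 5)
Your construction is the paper's construction, and your two key steps are the paper's two key lemmas in concrete form. The paper proves ``a.s.\ right-invariant $\Rightarrow$ a.s.\ deterministically invariant'' abstractly, via the Cauchy-Schwarz property of $\cat{Stoch}$ (\Cref{usingcauchy}); your $L^2$ Dirichlet-form computation is exactly the classical instantiation of that argument, and it is correct (the expansion uses only stationarity $mp=p$, Fubini for bounded nonnegative integrands, and the fixed-point equation). The paper then shows (\Cref{detinvthm}) that deterministic invariance forces every level set $f(B|\cdot)^{-1}(C)$ to be an a.s.\ invariant set, hence an element of $\Sigma_\inv$; this is your level-set argument verbatim, including the observation that it works for every $m\in M$ separately and therefore lands in $\Sigma_\inv$ itself. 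Your cocone check, uniqueness argument (via the exact identity $\tilde f\circ r=\tilde f$), and both directions of the determinism clause of \Cref{defxinv} are also sound.

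Where you go wrong is in the last paragraph, which retreats from what you have already proved. Membership of a set in $\Sigma_\inv$ is an exact, not an almost-sure, notion (the ``almost surely'' is inside the defining condition of an invariant set, not a qualifier on membership); so once every level set $\{g_B\le c\}$ lies in $\Sigma_\inv$, the function $g_B=f(B|\cdot)$ is \emph{genuinely} $\Sigma_\inv$-measurable, not merely ``$p$-almost surely $\Sigma_\inv$-measurable''. Consequently $\tilde f\coloneqq f$, the very same function, is already a Markov kernel $(X,\Sigma_\inv)\to Y$: measurability in $x$ for each $B$ is what you proved, and countable additivity in $B$ for each fixed $x$ is inherited from $f$ unchanged, since no modification on any null set ever takes place. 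There is nothing to assemble: no versions, no conditional expectations $\E[f(B|\cdot)\mid\Sigma_\inv]$, no regular conditional distribution along $r$, and no standard Borel hypothesis on $Y$. This is not merely an aesthetic point. The theorem is stated for $\cat{PS(Stoch)}$, where $Y$ is an arbitrary measurable space; if your construction genuinely required a disintegration (hence a standard Borel target), your proof would establish a strictly weaker statement than the one claimed, and the reduction to $\cat{PS(Borel)}$ in \Cref{xinvborel} is a separate result obtained by entirely different means (idempotent splitting), not by this construction. The reason everything works on the nose --- and the reason the paper defines $\Sigma_\inv$ through \emph{almost surely} invariant sets rather than strictly invariant ones --- is precisely that the null sets are absorbed into the definition of the $\sigma$-algebra, so that the factorizing kernel is $f$ itself.
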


	Before we prove the theorem, let us take a deeper look at what it means for $f$ to make this triangle of $\cat{PS(Stoch)}$ commute:
	\[\begin{tikzcd}[sep=small]
		{(X, p)} \\
		&&& {(Y, q)} \\
		{(X, p)}
		\arrow["m"', from=1-1, to=3-1]
		\arrow["f", from=1-1, to=2-4]
		\arrow["f"', from=3-1, to=2-4]
	\end{tikzcd}\]
	This means exactly that $fm \aseq_p f$ for all $m$. In terms of string diagrams:
	\begin{equation}\label{harmonicity}
		\tikzfig{f_harmonic}
	\end{equation}
	and in terms of integrals, this means that for all measurable $B\subseteq Y$, we have $p$-almost surely that
	\begin{equation}\label{harmonic}
		\int_{X} f(B|x') \, m(dx'|x)  =  f(B|x) ,
	\end{equation}
	i.e.~for all measurable $A\subseteq X$,
	\begin{equation}\label{harmonicA}
		\int_A \int_{X} f(B|x') \, m(dx'|x)\,p(dx)  =  \int_A f(B|x) \,p(dx) .
	\end{equation}
	
	In terms of measure-theoretic probability, invariant kernels in this form are related to \emph{harmonic functions}. Recall that given a Markov kernel $m:X\to X$, a function $h:X\to\R$ is called \emph{harmonic} if for every $x\in X$ we have that
	$$
	\int_{X} h(x') \, m(dx'|x)  =  h(x)  .
	$$
	Intuitively, a harmonic function is a function that is ``spread fairly'' after a transition, or that is ``invariant in expectation'', and it defines a martingale canonically associated to the Markov chain. 
	(See any text on Markov chains, such as \cite[Section~4.1]{markovchains} or \cite[Section~17.1.2]{stochstability}.)
	
	Now a Markov kernel $f:X\to Y$ satisfies $fm=f$ (i.e.~condition \eqref{harmonic} for all $x$) if and only if for each measurable subset $B$ of $Y$, the function 
	\begin{align*}\label{defhB}
		f_B \colon X & \longrightarrow Y \\[-1ex]
		x & \longmapsto f(B|x)
	\end{align*}
	is harmonic for the kernel $m$. Similarly, $fm\aseq_pf$ if and only if the function $f_B$ defined above is \emph{$p$-almost surely harmonic}, i.e.~equation \eqref{harmonicA} holds for all $A\in\Sigma_A$.
	The proof of \Cref{xinvexists} will be related to the well-known fact that for stationary Markov chains, \emph{harmonic functions are almost surely invariant} (see the references above). Let us now state and prove this in a general, categorical setting. (While the idea and the proof are simple, the translation takes a little more work.)
	Just as invariant kernels are, in a sense, a categorical version of harmonic functions (indeed, their entries are harmonic functions, as we saw above), let us define a categorical analogue of \emph{invariant} functions.
	
	\begin{definition}\label{detinvdef}
		Consider an object $X$ in a causal Markov category with a state $p$, and a dynamical system $D$ in $\cat{PS(C)}$ acting on $(X, p)$.
		We call a morphism $f:X\to Y$ $p$-a.s.\ \emph{deterministically invariant} if and only if it does not depend, $p$-almost surely, on whether its input has transitioned or not,
		that is, if the following equality holds. 
		\begin{equation}\label{f invariant}
			\tikzfig{f_invariant}        
		\end{equation}
	\end{definition}
	
	Notice that we introduce the definition with the equality \eqref{f invariant} because it gives more intuition for the reason of this definition, but an equivalent (and easier to use) condition is:
	\begin{equation}\label{f invariant eq}
		\tikzfig{f_invariant_eq}
	\end{equation}
	
	In $\cat{Stoch}$, these kernels correspond exactly to the ones whose entries are $p$-almost surely invariant, i.e.~for which 
	$f(B|x)$ is almost surely equal to $f(B|x')$ for the measure $(A, B) \mapsto m(B|x)\,p(A)$ on $X\times X$. 
	
	Note that an a.s.\ deterministically invariant morphism is in particular a.s.\ invariant, as it can be seen by marginalizing \eqref{f invariant} over the last output:
	\ctikzfig{f_inv-2}
	In traditional probability theory, this reflects the fact that every invariant function is harmonic. To see this correspondence, note that a kernel has a.s.\ invariant entries if and only if it is a.s.\ deterministically invariant (and a.s.\ harmonic entries if and only if it is a.s.\ invariant as a kernel, see above). 
	
	Let us now use this to prove that under some conditions, every right-invariant Markov kernel is a.s.\ deterministically invariant; a categorical analogue of the fact that every harmonic function is invariant.\footnote{We thank Yuwen Wang for pointing out this fact to us.}
	
	We will use the following axiom, which first appeared in \cite[Appendix~A.5]{supports}.
	\begin{definition}
		\label{cauchy_schwarz}
		A Markov category $\cat{C}$ has the \emph{Cauchy-Schwarz property} if the following implication holds.
		\ctikzfig{cauchy_schwarz}
	\end{definition}
	
	See the original reference for interpretation and for motivation about the terminology. 
	
	\begin{lemma}\label{usingcauchy}
		Let $\cat{C}$ be a Markov category satisfying the Cauchy-Schwarz property. 
		Let $X$ be an object of $\cat{C}$, and let $m:X\to X$ be a morphism preserving a state $p$ on $X$. 
		Then every $p$-a.s.\ invariant morphism is $p$-a.s.\ deterministically invariant.
	\end{lemma}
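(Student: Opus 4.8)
The plan is to transcribe into string diagrams the classical $L^2$ argument that a bounded harmonic function of a stationary Markov chain is almost surely invariant, with the Cauchy--Schwarz property (\Cref{cauchy_schwarz}) supplying the categorical analogue of the implication ``$\E[(U-V)^2]=0 \Rightarrow U=V$ almost surely''.

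First I would set up the relevant coupling. Let $\mu \coloneqq (\id_X\otimes m)\circ\cop_X\circ p$ be the state on $X\otimes X$ modelling the joint law of (current state, next state): its first marginal is $p$ by construction, and its second marginal is $m\circ p=p$ precisely because $m$ preserves $p$. Let $a,b\colon X\otimes X\to Y$ denote the morphisms obtained by applying $f$ to the first, resp.\ the second, factor while discarding the other with $\del$. The content of deterministic invariance, condition \eqref{f invariant eq}, is exactly the $\mu$-almost sure equality $a\aseq_\mu b$; marginalizing one copy of $f$ recovers the plain a.s.\ invariance $f\circ m\aseq_p f$, consistently with the remark following \Cref{detinvdef}.

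Next I would apply the Cauchy--Schwarz property to the pair $(a,b)$ with state $\mu$. Its hypothesis asks that the three ``correlation'' states on $Y\otimes Y$ --- formed by copying the input drawn from $\mu$ and feeding the two copies into $(a,a)$, $(a,b)$ and $(b,b)$ --- coincide. In $\cat{Stoch}$ these are the states $(B_1,B_2)\mapsto \int f(B_1|x)f(B_2|x)\,p(dx)$ for $(a,a)$, the analogous integral against the second marginal $m\circ p$ for $(b,b)$, and $(B_1,B_2)\mapsto\int f(B_1|x)\big(\int f(B_2|x')\,m(dx'|x)\big)\,p(dx)$ for $(a,b)$. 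The equality of the first two uses only stationarity $m\circ p=p$, which makes both marginals of $\mu$ equal to $p$; the equality of the third with the first uses the harmonicity identity \eqref{harmonic}, namely $\int f(B|x')\,m(dx'|x)=f(B|x)$ $p$-almost surely, to collapse the inner integral. With all three correlations equal, the Cauchy--Schwarz property delivers $a\aseq_\mu b$, i.e.\ the desired deterministic invariance.

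The hard part will be the diagrammatic bookkeeping rather than the ideas: expressing the correlation-equality hypothesis exactly in the form demanded by \Cref{cauchy_schwarz}, and keeping track of which equalities are strict and which hold only $p$-almost surely. In particular the harmonicity identity holds only $p$-a.s., so I must make sure this qualifier propagates correctly through the copy-and-pair construction (using relative positivity/causality to commute $f$ past the integration over the transition) before the axiom is invoked; this is the step most likely to require care.
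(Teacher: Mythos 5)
Your proposal is correct and follows essentially the same route as the paper's proof: the paper instantiates the Cauchy--Schwarz property with exactly your data $(a,b,\mu)$ (there denoted $\tilde f$, $\tilde g$, $\tilde h$), verifies the correlation-equality hypothesis from $fm\aseq_p f$ and $m\circ p=p$, and then rewrites the resulting conclusion (using $fm\aseq_p f$ and coassociativity) into the deterministic-invariance equation \eqref{f invariant eq}. The only step you flag as delicate is in fact immediate and needs no causality or relative positivity: since $fm\aseq_p f$ is by definition a \emph{strict} equality of morphisms out of the copied state, the three hypothesis diagrams are obtained from it purely by post-composition with $f\otimes\id$, by symmetry of copying, and by stationarity, which is exactly how the paper proceeds.
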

	
	Note that since $\cat{Stoch}$ has the Cauchy-Schwarz property (\cite[Proposition A.5.2]{supports}), this lemma will apply to $\cat{Stoch}$ and $\cat{BorelStoch}$.
	
	\begin{proof}
		Recall that we can express the deterministic invariance condition as \eqref{f invariant eq}.
		
		Now assume that $fm\aseq_p f$. We can apply the Cauchy-Schwarz property as follows: we define $\tilde{f}, \tilde{g}, \tilde{h}$ to be the morphisms defined by the following string diagrams
		\ctikzfig{tildes_def_CS}
		This way, 
		\begin{equation}\label{CS cd 1}
			\tikzfig{CS_cond_1}
		\end{equation}
		and
		\begin{equation}\label{CS cd 2}
			\tikzfig{CS_cond_2}
		\end{equation}
		where the very last equality comes from the fact that $mp = p$. But the equality of these two diagrams follows from the fact that $fm\aseq_p f$. 
		The Cauchy-Schwarz property tells us that this diagram
		\begin{equation}\label{CS csq 1}
			\tikzfig{CS_csq_1}
		\end{equation}
		is equal to this diagram:
		\begin{equation}\label{CS csq 2}
			\tikzfig{CS_csq_2}
		\end{equation}
		Now we can rewrite \eqref{CS csq 1} as follows using $fm\aseq_p f$ and coassociativity,
		\ctikzfig{CS_csq_1_bis}
		so that equality with \eqref{CS csq 2} is (up to a reordering of the outputs) exactly the equality \ref{f invariant eq} we aimed to prove.
	\end{proof}

	It is worth pointing out that the lemma above does not admit a strict version (i.e.~where one replaces a.s.~equality with strict equality), as the following example shows:\footnote{We thank Sean Moss for suggesting this example.}
	\begin{example}\label{seanexample}
		Consider a finite set $A=\{a,b,c\}$ with the Markov chain given the transition matrix $m$ below, and the function 
		$h:A\to[0,1]$ (or equivalently, the kernel $A\to\{0,1\}$) also given below.
		\[
		\begin{aligned}
			m &= \left( \begin{matrix}
				1 & 1/2 & 0 \\
				0 & 0 & 0 \\
				0 & 1/2 & 1
			\end{matrix} \right) \\
			&\begin{tikzcd}[row sep=0]
				\phantom{a}\\
				A \ar{r}{h} & {[0,1]} \\ 
				a \ar[mapsto]{r} & 1 \\
				b \ar[mapsto]{r} & 1/2 \\
				c \ar[mapsto]{r} & 0 \\
			\end{tikzcd}
		\end{aligned}
		\qquad\qquad
		\begin{tikzpicture}[scale=1.2, baseline={(current bounding box.center)},
			transition/.style={midway,  font=\scriptsize, color=probcolor},]
			
			\node[circle, draw=black, inner sep=0.5mm] (c) at (0,0) {$c\strut$};
			\node[circle, draw=black, inner sep=0.5mm] (b) at (0,1) {$b\strut$};
			\node[circle, draw=black, inner sep=0.5mm] (a) at (0,2) {$a\strut$};
			\node[inner sep=0.6cm, draw=black,fit=(a) (c),label=left:$A$] () {} ;
			
			\node[bullet, inner sep=1pt, label=right:$0$] (0) at (3,0) {};
			\node[bullet, inner sep=1pt, label=right:$1/2$] (mid) at (3,1) {};
			\node[bullet, inner sep=1pt, label=right:$1$] (1) at (3,2) {};
			\draw[->] (3,-0.5) -- (3,2.5) node[right] {$\R$};
			
			\draw[ar, dotted] (a) to (1) {} ;
			\draw[ar, dotted] (b) to (mid) {} ;
			\draw[ar, dotted] (c) to (0) {} ;
			
			\draw[ar,probcolor] (a) to [out=120,in=60, looseness=5] node[transition, near start, left] {1} (a);
			\draw[ar,probcolor] (b) to node[transition, left] {1/2} (a);
			\draw[ar,probcolor] (b) to node[transition, left] {1/2} (c);
			\draw[ar,probcolor] (c) to [out=-120,in=-60, looseness=5] node[transition, near start, left] {1} (c);
		\end{tikzpicture}
		\]
		The function $h$ is harmonic (equivalently, the corresponding kernel is invariant):
		\begin{align*}
			h(b) &= \dfrac{1}{2} + 0 + 0 \\
			&= h(a)\,m(b|a) + h(b)\,m(b|b) + h(c)\,p(c|a) ,
		\end{align*}
		but not invariant (equivalently, the corresponding kernel is not deterministically invariant), since in the convex combination above, 
		the term $m(b|b)$ is zero.
		However, every invariant measure $p$ is necessarily supported on $\{a,c\}$, and assigns to $b$ probability zero, so that $p$-almost surely, $h$ is invariant.
	\end{example}

	\begin{lemma}\label{detinvthm}
		A morphism $f : (X,p) \to (Y, q)$ in $\cat{PS(Stoch)}$ is $p$-a.s. deterministically invariant if and only if $f(B|-)$ is measurable for the invariant $\sigma$-algebra generated by $m$. 
	\end{lemma}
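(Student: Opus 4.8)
The plan is to reduce everything to the level of kernel entries using the concrete description of deterministic invariance in $\cat{Stoch}$ recorded just after \Cref{detinvdef}: the morphism $f$ is $p$-a.s.\ deterministically invariant precisely when, for every measurable $B\subseteq Y$, the two functions $(x,x')\mapsto f(B|x)$ and $(x,x')\mapsto f(B|x')$ agree almost everywhere for the joint measure $\mu$ on $X\times X$ given by $\mu(A_1\times A_2)=\int_{A_1} m(A_2|x)\,p(dx)$, i.e.\ $\mu(dx\,dx')=p(dx)\,m(dx'|x)$. Fixing $B$ and writing $g:=f(B|-)\colon X\to[0,1]$, the statement becomes the classical equivalence that $g$ is $\Sigma_\inv$-measurable if and only if $g(x)=g(x')$ for $\mu$-almost every $(x,x')$, which I would establish in both directions. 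The only general fact used repeatedly is that, by the disintegration of $\mu$, a $\mu$-null set has $m(\cdot|x)$-null slice for $p$-almost every $x$.

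For the direction ``deterministically invariant $\Rightarrow$ $\Sigma_\inv$-measurable'', assume $g(x)=g(x')$ $\mu$-a.e. By the disintegration this means that for $p$-almost every $x$ one has $g(x')=g(x)$ for $m(\cdot|x)$-almost every $x'$. Consequently, for each threshold $t$ and $p$-almost every $x$,
\[
m(\{g>t\}\mid x)=\int_X 1_{\{g>t\}}(x')\,m(dx'|x)=1_{\{g>t\}}(x),
\]
so each genuine measurable set $\{g>t\}$ is invariant and hence lies in $\Sigma_\inv$. Since $g$ is determined by its super-level sets at rational thresholds and $\Sigma_\inv$ is a $\sigma$-algebra, $g$ is $\Sigma_\inv$-measurable.

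For the converse, assume $g$ is $\Sigma_\inv$-measurable. If $g=1_A$ with $A\in\Sigma_\inv$, invariance gives $\int_X 1_A(x')\,m(dx'|x)=m(A|x)=1_A(x)$ $p$-a.s.; since the integrand is $\{0,1\}$-valued and its $m(\cdot|x)$-average equals $1_A(x)\in\{0,1\}$, it follows that $1_A(x')=1_A(x)$ for $m(\cdot|x)$-almost every $x'$ and $p$-almost every $x$, i.e.\ $1_A(x)=1_A(x')$ $\mu$-a.e. By linearity this passes to simple $\Sigma_\inv$-measurable functions, and then to any bounded $\Sigma_\inv$-measurable $g$ by uniform approximation, a countable union of $\mu$-null sets again being $\mu$-null. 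This gives $g(x)=g(x')$ $\mu$-a.e., which is the entrywise form of deterministic invariance; ranging over all $B$ yields the claim.

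I expect the main obstacle to be purely measure-theoretic bookkeeping rather than anything conceptual: correctly invoking the disintegration $\mu(dx\,dx')=p(dx)\,m(dx'|x)$ (available since the underlying spaces are standard Borel) to move between ``$\mu$-almost everywhere'' and ``for $p$-a.e.\ $x$, $m(\cdot|x)$-a.e.\ $x'$'', and arranging the approximation in the converse so that only countably many null sets are discarded. For a general acting monoid $M$, deterministic invariance is imposed for each $m\in M$ and $\Sigma_\inv=\bigcap_{m\in M}\Sigma_\inv^{(m)}$ is the intersection of the per-transition invariant $\sigma$-algebras; since a real-valued function is measurable for an intersection of $\sigma$-algebras exactly when it is measurable for each, the single-transition argument above applies to every $m$ and assembles with no further hypotheses.
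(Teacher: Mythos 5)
Your proof is correct and takes essentially the same route as the paper's: both unfold $p$-a.s.\ deterministic invariance into the entrywise claim that, for each $B$, the function $g=f(B|-)$ satisfies $g(x)=g(x')$ for $\mu$-almost all $(x,x')$ where $\mu(dx\,dx')=p(dx)\,m(dx'|x)$, prove the direction ``invariant $\Rightarrow$ measurable'' by the same kernel-average computation showing that level sets of $g$ are almost surely invariant sets, and prove the converse from the fact that the indicator of an a.s.\ invariant set takes $\mu$-a.e.\ equal values in its two arguments (the paper phrases this as the vanishing of the symmetric difference of $g^{-1}(C)\times X$ and $X\times g^{-1}(C)$). Two minor remarks: your rational-threshold and simple-function bookkeeping makes explicit the countable-family step that the paper's final ``i.e.''\ glosses over, and the slice-wise Fubini fact you attribute to standard Borel disintegration needs no such hypothesis --- the identity $\mu(N)=\int_X m(N_x|x)\,p(dx)$ holds for every product-measurable $N$ over arbitrary measurable spaces and kernels, which matters here since the lemma is stated for $\cat{PS(Stoch)}$ rather than $\cat{PS(BorelStoch)}$.
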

	
	\begin{proof}
		Once again, write $f_B(x)$ for $f(B|x)$ where $B \in \Sigma_Y$ and $x \in X$.
		First of all, in $\cat{Stoch}$, $f$ being $p$-a.s. deterministically invariant is equivalent to $f_B$ being a $p$-a.s.\ invariant function for each $B\in\Sigma_Y$. 
		So let $B \in \Sigma_Y$.
		
		Suppose that $f_B(x') = f_B(x)$ holds $m(dx'|x)\,p(dx)$-almost surely (which corresponds exactly to $f_B$ being $p$-a.s.\ deterministically invariant). Then the aim is to prove that for every (Borel) measurable subset $C\in\Sigma_R$, the subset $f_B^{-1}(C)$ is ($p$-almost surely) invariant, that is,
		\begin{equation}\label{measinv}
			m(f_B^{-1}(C) | x) = 1_{f_B^{-1}(C)}(x) = 1_C(f_B(x))
		\end{equation}
		for $p$-almost all $x$.
		Now for all $A\in\Sigma_X$,
		\begin{equation*}
			\begin{split}
				\int_A m(f_B^{-1}(C) | x) \,p(dx) &=  \int_A \left( \int_{f_B^{-1}(C)} m(dx' | x) \right) p(dx) \\
				&= \int_A \int_X 1_{C}(f_B(x')) \,m(dx' | x) \,p(dx) \\
				&= \int_A \int_X 1_{C}(f_B(x)) \,m(dx' | x) \,p(dx) \\
				&= \int_A 1_{C}(f_B(x)) \left( \int_X m(dx' | x) \right) p(dx) \\
				&= \int_A 1_{f_B^{-1}(C)}(x) \,p(dx) ,
			\end{split}
		\end{equation*}
		where the third equality uses a.s.\ deterministic invariance of $f_B$.
		Therefore $f_B$ is measurable for the invariant $\sigma$-algebra.
		
		Conversely, assume $f_B$ is measurable for the invariant $\sigma$-algebra. The goal is to prove that $f_B(x) = f_B(x')$ holds $m(dx'|x)\,p(dx)$-a.s., i.e.~the sets $X \times f_B^{-1}(C)$ and $f_B^{-1}(C) \times X$ must be equal up to measure-zero (for this measure on $X \times X$) for all measurable subsets $C$ of $\R$. Let us evaluate the difference of these two sets: first consider $\left( X \times f_B^{-1}(C) \right) \setminus \left( f_B^{-1}(C) \times X \right) = \left( X \setminus f_B^{-1}(C) \right) \times f_B^{-1}(C)$. The measure of interest evaluates this set to:
		\begin{equation*}
			\begin{split}
				\int_{X \setminus f_B^{-1}(C)} m(f_B^{-1}(C) | x) p(dx) &= \int_{X \setminus f_B^{-1}(C)} 1_{f_B^{-1}(C)}(x) p(dx) \\
				&= p\left( \left( X \setminus f_B^{-1}(C) \right) \cap f_B^{-1}(C) \right) = 0
			\end{split}
		\end{equation*}
		where the first equality holds because $f_B$ is measurable for the invariant $\sigma$-algebra, and so $f_B^{-1}(C)$ is an invariant set.
		
		For the other difference, $f_B^{-1}(C) \times \left( X \setminus f_B^{-1}(C) \right)$, the argument is analogous.
		Therefore the symmetric difference of $X \times f_B^{-1}(C)$ and $f_B^{-1}(C) \times X$ also has measure zero.
	\end{proof}

	We are now ready to prove the main theorem. 
	\begin{proof}[Proof of \Cref{xinvexists}]
		Let $f:(X,p)\to (Y,q)$ be a $p$-almost surely invariant morphism, i.e.~making the outer triangle in the following diagram commute $p$-almost surely.
		\[\begin{tikzcd}
			{(X, p)} \\
			& {(X_\inv, p_\inv)} && {(Y, q)} \\
			{(X, p)}
			\arrow["r"', shift right, from=1-1, to=2-2]
			\arrow["r", shift left, from=3-1, to=2-2]
			\arrow["m"', from=1-1, to=3-1]
			\arrow["f", curve={height=-12pt}, from=1-1, to=2-4]
			\arrow["f"', curve={height=12pt}, from=3-1, to=2-4]
			\arrow["{\tilde{f}}"', dashed, from=2-2, to=2-4]
		\end{tikzcd}\]
		Denote by $X_\inv$ the set $X$ equipped with the (almost surely) invariant $\sigma$-algebra, and denote by $r:X\to X_\inv$ the kernel induced by the set-theoretical identity: for every $x\in X$ and every invariant set $B$,
		$$
		r(B|x) \coloneqq 1_B(x) .
		$$
		Denote also by $p_\inv$ the restriction of $p$ to invariant sets. The triangle involving $r$ in the diagram above commutes $p$-almost surely: for every invariant set $B$ and every measurable $A\subseteq X$,
		$$
		\int_A\int_X r(B|x')\,m(x'|x)\,p(dx) = \int_A m(B|x)\,p(dx) = \int_A 1_B(x)\,p(dx)
		$$
		precisely by invariance of $B$. 
		Notice that, since $r$ is the set-theoretical identity, there is a unique (almost surely) possible kernel $\tilde{f}:X_\inv\to Y$ making the diagram above commute, namely, set-theoretically, the same as $f$:
		$$
		\tilde{f}(B|x) \coloneqq f(B|x) 
		$$
		for every measurable $B\subseteq Y$, $p_\inv$-almost surely in $x\in X$ . 
		It remains to show that $\tilde{f}$ is indeed a Markov kernel $X_\inv\to Y$, i.e.~that it is measurable for the invariant $\sigma$-algebra.
		Now since $f$ is ($p$-a.s.) invariant for every $m$, so that by \Cref{usingcauchy} it is deterministically invariant for every $m$, and by \Cref{detinvthm} it is measurable for the invariant $\sigma$-algebra induced by each $m$. Moreover, by construction, $\tilde{f}$ maps $p_\inv$ to $q$. In other words, $\tilde{f}$ is the unique morphism $(X_\inv,p_\inv)\to(Y,q)$ of $\cat{PS(Stoch)}$ making the diagram above commute, and so $(X_\inv,p_\inv)$ is a colimit of $D$ with colimiting cocone $r$.
		
		It now remains to prove that $f$ is a.s.\ deterministic if and only if $\tilde{f}$ is. Now suppose that $f$ is a.s.\ deterministic. 
		Since $f\aseq \tilde{f}\circ r$, and since $r$ is deterministic, this means that 
		\ctikzfig{asdetf}
		Composing with $r$ on the first output, and using again determinism of $r$, we get that
		\ctikzfig{asdetf2}
		i.e.~that $\tilde{f}$ is $p_\inv$-a.s.\ deterministic (recall that $p_\inv=r\circ p)$. 
		Conversely, if $\tilde{f}$ is $p_\inv$-a.s.~deterministic, then $f \aseq_p \tilde{f} \circ r$ is $p$-a.s.~deterministic as the composition of two almost surely deterministic morphisms (recall lemma \Cref{a-s det composition}).
	\end{proof}
	
	As we will show in the next section, if $X$ is standard Borel, one can take as invariant object $X_\inv$ a standard Borel space too.

	\subsection{Idempotents in $\cat{PS(Borel)}$}\label{idempotents}
	
	In this section we focus on invariant objects of \emph{idempotent} stationary Markov chains.
	Recall that a morphism $e:X\to X$ in a category is called \emph{idempotent} if $e\circ e=e$. 
	Any idempotent morphism $e:X\to X$ defines an $\N$-indexed dynamical system acting on $X$ via $n\mapsto e$ for $n\ge 1$, and $0\mapsto\id_X$. 
	For this dynamical system, its colimits are particularly convenient, and coincide with its limits. Let us see how.
	Recall that a \emph{splitting} of an idempotent $e:X\to X$ consists of an object $E$ and maps $\iota:E\to X$ and $\pi:X\to E$ such that $e=\iota\circ\pi$ and $\pi\circ\iota=\id_E$. Consider now the following statement:
	
	\begin{proposition}[{\cite[Proposition~1]{cauchycompletion}}]\label{eqsplit}
		For an idempotent $e:X\to X$, the following are equivalent.
		\begin{itemize}
			\item $e$ has a splitting $(E,\iota,\pi)$;
			\item The pair $(e,\id_X)$ has an equalizer
			\begin{equation*}
				\begin{tikzcd}
					E \ar{r}{\iota} & X \ar[shift left]{r}{e} \ar[shift right]{r}[swap]{\id} & X ;
				\end{tikzcd}
			\end{equation*}
			\item The pair $(e,\id_X)$ has a coequalizer
			\begin{equation*}
				\begin{tikzcd}
					& X \ar[shift left]{r}{e} \ar[shift right]{r}[swap]{\id} & X \ar{r}{\pi} & E .
				\end{tikzcd}
			\end{equation*}
		\end{itemize}
		Moreover, the equalizer and coequalizer above, if they exist, are absolute (i.e.~preserved by every functor).
	\end{proposition}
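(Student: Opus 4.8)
The plan is to prove the three-way equivalence by establishing the two implications out of the splitting condition (1), then the two implications back into it, and finally to read off absoluteness from the purely equational nature of a splitting.

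First I would show that (1) implies both (2) and (3) by verifying the universal properties directly. Given a splitting $(E,\iota,\pi)$, the map $\iota$ equalizes the pair since $e\circ\iota = \iota\circ\pi\circ\iota = \iota$; and for any $m:A\to X$ with $e\circ m = m$ the factorization through $\iota$ must be $\pi\circ m$, because $\iota\circ(\pi\circ m) = e\circ m = m$ while any factorization $m''$ satisfies $m'' = \pi\circ\iota\circ m'' = \pi\circ m$ using $\pi\circ\iota=\id_E$. Dually, $\pi$ coequalizes the pair since $\pi\circ e = \pi\circ\iota\circ\pi = \pi$, and any $m:X\to A$ with $m\circ e = m$ factors uniquely as $m\circ\iota$, again by $\pi\circ\iota=\id_E$. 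This yields (1)$\Rightarrow$(2) and (1)$\Rightarrow$(3).

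For the converses I would exploit that equalizers are monic and coequalizers are epic. Assuming $\iota:E\to X$ is an equalizer of $(e,\id_X)$, note that $e$ itself equalizes the pair, as $e\circ e = e = \id_X\circ e$; hence $e$ factors uniquely as $\iota\circ\pi$ for some $\pi:X\to E$, giving the relation $e=\iota\circ\pi$. To obtain the remaining identity I would compute $\iota\circ\pi\circ\iota = e\circ\iota = \iota = \iota\circ\id_E$ and cancel the monomorphism $\iota$, yielding $\pi\circ\iota=\id_E$. The implication (3)$\Rightarrow$(1) is exactly dual: $e$ coequalizes $(e,\id_X)$, so it factors as $\iota\circ\pi$ through the coequalizer $\pi$, and cancelling the epimorphism $\pi$ in $\pi\circ\iota\circ\pi = \pi\circ e = \pi = \id_E\circ\pi$ gives $\pi\circ\iota=\id_E$.

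Finally, for absoluteness the key observation --- and really the crux of the statement --- is that a splitting is encoded by the two equations $e=\iota\circ\pi$ and $\pi\circ\iota=\id_E$, which any functor $F$ preserves verbatim, so that $(FE,F\iota,F\pi)$ is a splitting of $Fe$. Applying the already-established equivalence (1)$\Leftrightarrow$(2)$\Leftrightarrow$(3) inside the target category then shows that $F\iota$ and $F\pi$ are respectively an equalizer and a coequalizer of $(Fe,\id_{FX})$, which is exactly absoluteness. I do not anticipate a serious obstacle; the only mildly delicate points are the two cancellation steps in the converses, which rest on the standard facts that equalizers are mono and coequalizers are epi, together with the conceptual step of recognizing that absoluteness becomes immediate once everything is phrased through the equational splitting data rather than through the universal properties themselves.
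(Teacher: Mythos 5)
Your proof is correct, and all the steps check out: the verification of the universal properties from the splitting equations, the two converses via monicity of equalizers and epicness of coequalizers, and the derivation of absoluteness by transporting the splitting data along an arbitrary functor and re-invoking the equivalence in the target category. Note that the paper itself gives no proof of this proposition --- it is cited directly from the literature (\cite[Proposition~1]{cauchycompletion}) --- and your argument is precisely the standard one for this classical fact, so there is nothing to compare beyond observing that you have supplied the proof the paper omits.
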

	
	If $\cat{C}$ is a causal Markov category and $e:(X,p)\to (X,p)$ is idempotent, which we see as an $\N$-indexed dynamical system, the proposition above implies that if $X_\inv$ exists, then it is not only the colimit of the system, but also the limit. 
	
	Under some conditions, every dynamical system, indexed by an arbitrary monoid, induces an idempotent dynamical system, which can often be interpreted as ``averaging over the orbits''. Let us look at this in detail (further insight will be given in \Cref{equilibrium}). 
	
	\begin{definition}\label{defeD}
		Let $\cat{C}$ be a causal Markov category.
		Consider a dynamical system $D$ in $\cat{PS(C)}$, indexed by a monoid $M$, acting on $(X,p)$.
		Suppose moreover that the invariant object $X_\inv$ exists, and that $r:X\to X_\inv$ has a Bayesian inverse $r^\dag:X_\inv\to X$ relative to $p$.
		The \emph{idempotent associated to $D$} (or \emph{transition to equilibrium}) is the morphism $e_D:(X,p)\to(X,p)$ given by the following composition.
		\begin{equation*}
			\begin{tikzcd}
				(X,p) \ar{r}{r} & (X_\inv,p_\inv) \ar{r}{r^\dag} & (X,p).
			\end{tikzcd}
		\end{equation*}
	\end{definition}
	
	When $\cat{C}$ is $\cat{Stoch}$, this definition can be applied to objects $X$ in $\cat{BorelStoch}$ since they have disintegrations (see \cite[Section 10.6]{BogachevVladimirI2007MT}).
	
	The name ``transition to equilibrium'' will be explained in \Cref{equilibrium}, with further intuition. For a concrete example, the interested reader can look at \Cref{finiteperm} before continuing this section. 
	Let us now see some immediate consequences of the definition, and of \Cref{eqsplit}.
	
	First of all, since $r$ is $p$-a.s.~deterministic, we have that $r\circ r^\dag$ is a.s.~equal to the identity of $(X_\inv,p_\inv)$ by \Cref{dag id}, so that $e_D$ is a split idempotent, with splitting $(X_\inv,r^\dag,r)$. 
	When $\cat{C}$ has conditionals, and so $\cat{PS(C)}$ is a dagger category, a split idempotent in this form, where the section and the retraction are the dagger of each other, is called a \emph{dagger-split idempotent}, see \cite{daggeridempotents}. 
	
	\begin{corollary}\label{invagain}
		If we consider $e_D:(X,p)\to(X,p)$ as (generating) the dynamical system $\N\to\cat{PS(C)}$ (via $1\mapsto e_D$), then by \Cref{eqsplit} its invariant object is again $X_\inv$. 
	\end{corollary}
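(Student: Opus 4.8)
The plan is to recognize the invariant object of the system generated by $e_D$ as a coequalizer and then apply \Cref{eqsplit} to the already-exhibited splitting of $e_D$. First I would unwind the colimit that defines this invariant object. The system generated by $e_D$ is the functor $\N\to\cat{PS(C)}$ sending $n\mapsto e_D$ for $n\ge 1$ and $0\mapsto\id_X$, so a cocone over it is a morphism $c:(X,p)\to(Y,q)$ with $c\circ e_D^n\aseq_p c$ for all $n$; since $e_D$ is idempotent this reduces to the single requirement $c\circ e_D\aseq_p c$. This is precisely the defining condition of a cocone over the parallel pair $e_D,\id_X:X\rightrightarrows X$, so the colimit of the $e_D$-system and the coequalizer of $(e_D,\id_X)$ are one and the same.

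Next I would feed in the structure established immediately before the statement: $e_D=r^\dag\circ r$ is a split idempotent with splitting $(X_\inv,r^\dag,r)$, where $r\circ r^\dag\aseq\id_{X_\inv}$. By \Cref{eqsplit}, the existence of this splitting guarantees that $(e_D,\id_X)$ has a coequalizer, and that it is realized by $X_\inv$ together with the retraction $r:(X,p)\to(X_\inv,p_\inv)$. Combined with the first step, this shows $X_\inv$ with cocone $r$ is a colimit of the $e_D$-system --- notably the very same object and cocone as for the original system $D$.

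It remains to check the second clause of \Cref{defxinv}, that in the universal triangle a factoring morphism $f$ is $p$-a.s.\ deterministic if and only if its mediating map $\tilde f$ is $p_\inv$-a.s.\ deterministic. Because the cocone $r$ and the factorization $f\aseq_p\tilde f\circ r$ are identical to those in \Cref{xinvexists}, this clause follows by the same argument used there: $r$ is $p$-a.s.\ deterministic (apply \Cref{defxinv} to $f=r$, whose mediating map is $\id_{X_\inv}$), determinism of $r$ propagates determinism from $f$ to $\tilde f$, and \Cref{a-s det composition} supplies the converse. I do not anticipate a genuine obstacle, since \Cref{eqsplit} packages the hard work; the only step needing care is the first identification, translating the monoid-indexed colimit into a coequalizer of a parallel pair, together with the observation that the extra determinism clause is inherited for free because the cone $r$ is unchanged.
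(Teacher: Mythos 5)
Your proof is correct and follows essentially the same route as the paper: the paper treats the statement as an immediate consequence of \Cref{eqsplit} applied to the splitting $(X_\inv, r^\dag, r)$ of $e_D$ established just before the corollary, identifying the colimit of the idempotent-generated system with the coequalizer of the pair $(e_D,\id_X)$, exactly as you do. Your explicit verification of the determinism clause of \Cref{defxinv} is a detail the paper leaves implicit, and your argument for it (reusing the reasoning of \Cref{xinvexists}, with \Cref{a-s det composition} for the converse direction) is sound.
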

	
	This in particular implies that 
	\begin{itemize}
		\item every idempotent $e:(X,p)\to(X,p)$ of $\cat{PS(C)}$ is trivially in the form $e_D$ for some $D$ (take $D$ to be the one generated by $e$ itself);
		\item every invariant object $X_\inv$ of every $D$ is given by the splitting of some idempotent (take $e_D$).
	\end{itemize}
	
	We can now use this formalism to prove that if $(X,p,M,D)$ is a dynamical system in $\cat{PS(Borel)}$, the object $X_\inv$ can be taken to be standard Borel (up to isomorphism of $\cat{PS(Stoch)}$), so that $\cat{PS(Borel)}$ is closed under taking invariant objects.
	We will proceed as follows. 
	It was proven in \cite[Corollary~4.4.5]{supports} that all idempotents of $\cat{BorelStoch}$ split.
	We can use that fact to prove that all idempotents of $\cat{PS(Borel)}$ are split as well. 
	This implies that for every idempotent $e:(X,p)\to (X,p)$, its invariant object $X_\inv$ is standard Borel (up to isomorphism of $\cat{PS(Stoch)}$).
	Since for every $D$ we can obtain $X_\inv$ equivalently as the invariant object of an idempotent, we get that $X_\inv$ is standard Borel (up to isomorphism of $\cat{PS(Stoch)}$) for every dynamical system $D$.
	
	\begin{theorem}\label{idempsplit}
		All idempotents split in $\cat{PS(Borel)}$. 
	\end{theorem}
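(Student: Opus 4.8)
The plan is to reduce the statement to the already-known fact that all idempotents of $\cat{BorelStoch}$ split (\cite[Corollary~4.4.5]{supports}), by lifting an idempotent of $\cat{PS(Borel)}$ to a genuine (strict) idempotent kernel, splitting it in $\cat{BorelStoch}$, and transporting the splitting back down. So let $e\colon(X,p)\to(X,p)$ be idempotent in $\cat{PS(Borel)}$, and choose a representative Markov kernel $e\colon X\to X$ in $\cat{BorelStoch}$ with $ep=p$. The only hypothesis available is $e\circ e\aseq_p e$, i.e.\ idempotence holds \emph{merely $p$-almost surely}. The difficulty is that such a representative need not be strictly idempotent, and simply modifying it on a null set does not help, because the intermediate integration in $e\circ e$ may still put mass on that null set.

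The key step is therefore to carve out a full-measure Borel subset on which $e$ becomes a strict idempotent and which $e$ maps into itself. Since the codomain $X$ is standard Borel, $e\circ e\aseq_p e$ provides a single $p$-null set $N_0$ with $(e\circ e)(\,\cdot\,\mid x)=e(\,\cdot\,\mid x)$ for all $x\notin N_0$. Define inductively $N_{k+1}\coloneqq N_k\cup\{x : e(N_k\mid x)>0\}$, which is measurable since $x\mapsto e(N_k\mid x)$ is. Each $N_k$ is $p$-null: by induction $p(N_k)=0$, so $\int_X e(N_k\mid x)\,p(dx)=p(N_k)=0$ using $ep=p$, whence $e(N_k\mid x)=0$ for $p$-almost all $x$ and the added set is null as well. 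Setting $X'\coloneqq X\setminus\bigcup_k N_k$ we get $p(X')=1$, and for $x\in X'$ monotone convergence gives $e\big(\bigcup_k N_k\mid x\big)=\lim_k e(N_k\mid x)=0$, so $e(X'\mid x)=1$: the set $X'$ is $e$-absorbing. Restricting domain and codomain yields a kernel $e'\colon X'\to X'$ which is now \emph{strictly} idempotent, since for $x\in X'$ we have $x\notin N_0$ and all the relevant mass stays inside $X'$.

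Being a Borel subset of a standard Borel space, $X'$ is standard Borel, so $e'$ splits in $\cat{BorelStoch}$ by \cite[Corollary~4.4.5]{supports}: there are $\iota\colon E\to X'$ and $\pi\colon X'\to E$ with $\iota\circ\pi=e'$ and $\pi\circ\iota=\id_E$. Equipping $E$ with the pushforward state $p_E\coloneqq\pi\circ(p|_{X'})$ makes $\pi$ measure-preserving by definition, and $\iota$ is measure-preserving because $e'=\iota\circ\pi$ preserves $p|_{X'}$, which follows from $ep=p$ together with $p(X')=1$. Finally $(X',p|_{X'})$ is isomorphic to $(X,p)$ in $\cat{PS(Borel)}$ via the mod-zero inclusion (\Cref{isomodzero}); composing $\iota$ and $\pi$ with this isomorphism turns $\iota\circ\pi=e'$ into $e$ (as $e'\aseq_p e$) while $\pi\circ\iota=\id_E$ is preserved on the nose, so the resulting data furnish the desired splitting of $e$ in $\cat{PS(Borel)}$. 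The single genuine obstacle in this argument is the inductive construction of the absorbing set $X'$: this is the one place where almost-sure idempotence must be upgraded to strict idempotence, and it is exactly where the measure-preservation hypothesis $ep=p$ is essential.
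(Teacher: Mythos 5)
Your proof is correct and takes essentially the same route as the paper's: in both cases the heart of the argument is the iterative construction of a full-measure absorbing set (\Cref{absorbingset} in the paper, your $X' = X\setminus\bigcup_k N_k$), which uses measure preservation to upgrade almost-sure idempotence to strict idempotence so that the splitting of idempotents in $\cat{BorelStoch}$ (\cite[Corollary~4.4.5]{supports}) can be invoked, after which the splitting descends to $\cat{PS(Borel)}$ by pushing the measure forward along the retraction. The only packaging difference is that the paper extends $e$ by the identity kernel off the absorbing set to obtain a strict idempotent on all of $X$ (\Cref{asidemp}), whereas you restrict $e$ to the standard Borel subspace $X'$ and transport the resulting splitting back along the mod-zero isomorphism of \Cref{isomodzero}; both realizations are sound.
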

	
	We prove the theorem by means of the following lemmas. 
	
	\begin{lemma}\label{absorbingset}
		Let $p$ be a probability measure on a standard Borel space $X$, and let $k:(X,p)\to(X,p)$ be a measure-preserving kernel. Then every measure one subset $A_0\subseteq X$ admits an \emph{absorbing} subset of full measure, i.e.~a subset $A\subseteq A_0$ such that
		\begin{itemize}
			\item $p(A)=1$;
			\item for all $a\in A$ we have $k(A|a)=1$. 
		\end{itemize}
		Note that the last property holds for all $a\in A$, not just for almost all.
	\end{lemma}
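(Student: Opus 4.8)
The plan is to bootstrap the measure-preserving property into a strict, pointwise absorbing condition via an iterate-and-intersect construction. The one elementary observation driving everything is this: if $S\subseteq X$ is measurable with $p(S)=1$, then measure-preservation gives $1 = p(S) = \int_X k(S|x)\,p(dx)$, and since $0\le k(S|x)\le 1$ the integrand must equal $1$ for $p$-almost every $x$. In other words, the set $\{x : k(S|x)=1\}$ again has full measure whenever $S$ does. This is the only place the hypothesis on $k$ is used.

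Using this, I would define a decreasing sequence of full-measure sets. Starting from $A_0$, set inductively
$$ A_{n+1} := A_n \cap \{x\in X : k(A_n|x)=1\}. $$
Each $A_n$ is measurable because $x\mapsto k(A_n|x)$ is measurable (being the evaluation of a Markov kernel), and by the observation above together with the inductive hypothesis $p(A_n)=1$, the set $\{x:k(A_n|x)=1\}$ has full measure; hence so does $A_{n+1}$, and $p(A_n)=1$ for all $n$ by induction. Now take $A := \bigcap_{n\ge 0} A_n$; as a countable intersection of full-measure sets it satisfies $p(A)=1$, and clearly $A\subseteq A_0$.

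It then remains to verify the strict absorbing property at every point of $A$. Fix $a\in A$. Since $a\in A_{n+1}$ for every $n$, the defining condition gives $k(A_n|a)=1$ for all $n$. The sets $A_n$ decrease to $A$, so continuity from above of the probability measure $k(-|a)$ yields $k(A|a)=\lim_{n} k(A_n|a)=1$. This holds for \emph{all} $a\in A$, not merely almost all, which is exactly the desired conclusion.

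The argument is essentially routine; the one point deserving care — and the only genuine obstacle — is that a single pass (taking $A_1 = A_0\cap\{x:k(A_0|x)=1\}$) guarantees only $k(A_0|a)=1$, not $k(A_1|a)=1$, for $a\in A_1$. Iterating repairs this mismatch between ``the set we are absorbing into'' and ``the set we have just certified as absorbing'', and the countable intersection packages all finite stages into a single set on which the strict condition holds pointwise. I expect no reliance on standard Borelness beyond the ambient measurable setting: only measure-preservation of $k$ and the monotone continuity of each fibre measure $k(-|a)$ are used.
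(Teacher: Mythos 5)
Your proof is correct and follows essentially the same route as the paper's: both iterate the observation that $\{x : k(S|x)=1\}$ has full measure whenever $S$ does, and then take the countable intersection $A=\bigcap_n A_n$, concluding $k(A|a)=1$ for every $a\in A$ from the fact that $k(-|a)$ assigns measure one to each $A_n$. The only cosmetic difference is that you make the sequence nested by construction (so you can invoke continuity from above), whereas the paper leaves the $A_n$ unnested and uses that a probability measure giving mass one to countably many sets gives mass one to their intersection --- the same fact in substance.
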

	\begin{proof}[Proof of~\Cref{absorbingset}]
		Let $A_0$ be a subset of measure one.
		Since $k$ preserves the measure $p$, and since $A_0$ has measure one, we have that 
		\begin{equation*}
			1 = p(A_0) = \int_X k(A_0|x)\,p(dx) ,
		\end{equation*}
		which means that $k(A_0|x)=1$ for $p$-almost all $x$. 
		Therefore there exists a measurable subset $A_1\subseteq X$ with $p(A_1)=1$, and such that for all $a\in A_1$, $k(A_0|a)=1$.
		
		Once again, since $k$ preserves the measure $p$, and since $A_1$ has measure one, we have that 
		\begin{equation*}
			1 = p(A_1) = \int_X k(A_1|x)\,p(dx) ,
		\end{equation*}
		which means that $k(A_1|x)=1$ for $p$-almost all $x$. 
		Therefore there exists a measurable subset $A_2\subseteq X$ with $p(A_2)=1$, and such that for all $a\in A_2$, $k(A_1|a)=1$.
		
		We can go on like this for all $n\in\N$: each step takes a set $A_n$ of measure one, and since $k$ preserves the measure $p$, we have that 
		\begin{equation*}
			1 = p(A_n) = \int_X k(A_n|x)\,p(dx) ,
		\end{equation*}
		which means that $k(A_n|x)=1$ for $p$-almost all $x$. 
		Therefore there exists a measurable subset $A_{n+1}\subseteq X$ with $p(A_{n+1})=1$, and such that for all $k\in A_{n+1}$, $k(A_n|a)=1$.
		
		Take now the countable intersection
		$$
		A \coloneqq \bigcap_{n=0}^\infty A_n .
		$$
		Since a countable intersection of sets of measure $1$ has measure $1$, we have that $p(A)=1$.
		Now let $a\in A$. For each $n\in\N$,
		$$
		k(A_n|a) = 1 ,
		$$
		since in particular $a\in A_{n+1}$. 
		So the probability measure $k(-|a)$ gives measure $1$ to all the $A_n$, and so it must give measure $1$ to their countable intersection, i.e.~$k(A|a)=1$.
	\end{proof}
	
	\begin{lemma}\label{asidemp}
		Let $p$ be a probability measure on a standard Borel space $X$.
		Every $p$-almost surely idempotent kernel on $(X,p)$ which preserves $p$ is $p$-almost surely equal to an idempotent kernel.
	\end{lemma}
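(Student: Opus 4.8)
The plan is to exploit the \emph{absorbing set} technology of \Cref{absorbingset} to locate a full-measure set on which the given kernel is \emph{strictly} idempotent and, crucially, invariant, and then to redefine the kernel on the remaining null set in a way that forces strict idempotency everywhere without disturbing the almost sure class.

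Let $e:(X,p)\to(X,p)$ be measure-preserving with $e\circ e\aseq_p e$. Since $X$ is standard Borel, almost sure equality of the two kernels $e\circ e$ and $e$ can be witnessed on a single measure-one set independent of the test set, so there is a measurable $A_0\subseteq X$ with $p(A_0)=1$ and $(e\circ e)(B\mid x)=e(B\mid x)$ for every $x\in A_0$ and every measurable $B\subseteq X$. First I would apply \Cref{absorbingset} to the measure-preserving kernel $e$ and the set $A_0$, obtaining an absorbing subset $A\subseteq A_0$ with $p(A)=1$ and $e(A\mid a)=1$ for \emph{every} $a\in A$ (not merely almost every). Thus $e$ maps each point of $A$ to a distribution supported on $A$, and on $A$ the strict identity $e\circ e=e$ holds.

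Next I would define the candidate idempotent $e'$ by keeping $e$ on $A$ and sending every point of the null set $X\setminus A$ to the Dirac measure at itself: $e'(\cdot\mid x)=e(\cdot\mid x)$ for $x\in A$, and $e'(\cdot\mid x)=\delta_x$ for $x\notin A$. This is a genuine Markov kernel (measurable, since $A$ is measurable), and since $A$ has full measure it is $p$-almost surely equal to $e$; in particular $e'$ is still measure-preserving. It remains to check strict idempotency pointwise. For $x\in A$ the measure $e(\cdot\mid x)$ is concentrated on $A$ by the absorbing property, so the composite integral only sees points of $A$, where $e'=e$ and where $A\subseteq A_0$ yields $e\circ e=e$; hence $(e'\circ e')(\cdot\mid x)=e'(\cdot\mid x)$. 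For $x\notin A$, integration against $\delta_x$ simply evaluates at $x$, so $(e'\circ e')(\cdot\mid x)=e'(\cdot\mid x)$ automatically.

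The genuinely load-bearing step is the passage from the almost-sure invariance set $A_0$ to the \emph{strictly} absorbing $A$ via \Cref{absorbingset}: without $e(A\mid a)=1$ holding for all $a\in A$, the composition $(e\circ e)(\cdot\mid x)$ for $x\in A$ would integrate over points possibly outside $A_0$, where we have no control over $e\circ e=e$, and the argument would collapse. Everything else is routine bookkeeping, and the choice $\delta_x$ off $A$ is convenient but inessential---any measurable assignment of $e$-stationary distributions supported on $A$ would serve equally well.
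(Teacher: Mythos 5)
Your proof is correct and takes essentially the same route as the paper's: both pass from the full-measure set $A_0$ witnessing almost-sure idempotency to an absorbing subset $A\subseteq A_0$ via \Cref{absorbingset}, redefine the kernel to be the Dirac (identity) kernel off $A$, and verify strict idempotency by the same two-case computation, using that $e(\cdot\mid a)$ is concentrated on $A$ for $a\in A$. The only difference is your closing remark on alternative choices off $A$, which is a correct but inessential aside.
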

	\begin{proof}[Proof of \Cref{asidemp}]
		First of all, the condition that $ee \aseq_p e$ means that there exists a measurable subset $A_0\subseteq X$ with $p(A_0)=1$, and such that for all $x\in A_0$ and all measurable $B\subseteq X$, we have $ee(B|x)=e(B|x)$. 
		By \Cref{absorbingset} there exists a measure one subset $A\subseteq A_0$ such that $e(A|a)=1$ for each $a\in A$. 
		
		Define now the kernel $e':X\to X$ as follows, for all $x\in X$ and all measurable $B\subseteq X$
		\begin{equation*}
			e'(B|x) \coloneqq 
			\begin{cases}
				e(B|x) & x\in A ; \\
				\delta(B|x) = 1_B(x) & x\notin A .
			\end{cases}        
		\end{equation*}
		Since $p(A)=1$, we have that $e'\aseq e$. 
		Now, for each $a\in A$ and each measurable $B\subseteq X$,
		\begin{align*}
			(e'\circ e') (B|a) &= \int_X e'(B|x')\,e'(dx'|a) \\
			&= \int_X e'(B|x')\,e(dx'|a) \\
			&= \int_A e'(B|x')\,e(dx'|a) \\
			&= \int_A e(B|x')\,e(dx'|a) \\
			&= \int_X e(B|x')\,e(dx'|a) \\
			&= (e\circ e)(B|a) \\
			&= e(B|a) \\
			&= e'(B|a) .
		\end{align*}
		On the complement, i.e.~for each $x\in X\setminus A$ and each measurable $B\subseteq X$,
		\begin{align*}
			(e'\circ e') (B|x) &= \int_X e'(B|x')\,e'(dx'|x) \\
			&= \int_X e'(B|x')\,\delta(dx'|x) \\
			&= e'(B|x) ,
		\end{align*}
		so for all $x\in X$, and not just $p$-almost surely, and for all measurable $B\subseteq X$,
		$$
		(e'\circ e') (B|x) = e'(B|x) ,
		$$
		that is, $e':X\to X$ is idempotent. 
	\end{proof}
	
	\begin{proof}[Proof of \Cref{idempsplit}]
		Let $e:(X,p)\to(X,p)$ be an idempotent of $\cat{PS(Borel)}$. This means that $e:X\to X$ preserves the measure $p$, and that $e\circ e\aseq_p e$.
		By \Cref{asidemp} there exists a (strictly, not just a.s.)\ idempotent kernel $e':X\to X$ such that $e\aseq e'$. Therefore, without loss of generality, we can assume that our kernel $e$ is idempotent (in $\cat{BorelStoch}$). 
		Since all idempotents in $\cat{BorelStoch}$ split, in particular $e$ does, so that we can write it as $e=\iota\circ\pi$ for $\iota:E\to X$ and $\pi:X\to E$, where $E$ is standard Borel, and $\pi\circ\iota=\id_E$. 
		
		Let now $q=\pi\circ p$, and consider the following diagram.
		\begin{equation*}
			\begin{tikzcd}
				(X,p) \ar[shift left]{r}{\pi} \ar[loop left]{}{e} & (E,q) \ar[shift left]{l}{\iota} \ar[loop right]{}{\id_E}
			\end{tikzcd}
		\end{equation*}
		We have that $\iota\circ q = \iota\circ\pi\circ p = e\circ p = p$, so that the diagram above is a diagram in $\cat{PS(Borel)}$.
		Moreover, $e=\iota\circ\pi$, and $\pi\circ\iota=\id_{(E,p)}$. 
		Therefore $(E,q)$, together with $\iota$ and $\pi$, splits the idempotent $e$ in $\cat{PS(Borel)}$ as well.
	\end{proof}

	\begin{corollary}\label{xinvborel}
		Let $X$ be a standard Borel space, and let $D$ be a dynamical system acting on $(X,p)$ indexed by a monoid $M$ of arbitrary cardinality.
		Then $(X_\inv,p_\inv)$, i.e.~the set $X$ equipped with the almost surely invariant $\sigma$-algebra, is a standard Borel space up to isomorphism of $\cat{PS(Stoch)}$.
	\end{corollary}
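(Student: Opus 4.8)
The plan is to realize $(X_\inv,p_\inv)$ as the splitting of an idempotent living in $\cat{PS(Borel)}$, and then to invoke \Cref{idempsplit} to re-split that same idempotent through a genuinely standard Borel object. By \Cref{xinvexists} the invariant object $(X_\inv,p_\inv)$ exists in $\cat{PS(Stoch)}$, with colimiting map $r:(X,p)\to(X_\inv,p_\inv)$, which is $p$-a.s.\ deterministic. Since $X$ is standard Borel it admits disintegrations, so the joint of $p$ along $r$ can be disintegrated over $X_\inv$ to produce a Bayesian inverse $r^\dag:(X_\inv,p_\inv)\to(X,p)$ relative to $p$; crucially this only requires the fibre space $X$ (not $X_\inv$) to be standard Borel. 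Following \Cref{defeD}, I set $e_D \coloneqq r^\dag\circ r:(X,p)\to(X,p)$.

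First I would observe that $e_D$ is an idempotent \emph{in} $\cat{PS(Borel)}$: it is an endomorphism of $(X,p)$ with $X$ standard Borel, and since $r$ is a.s.\ deterministic we have $r\circ r^\dag\aseq\id_{X_\inv}$ by \Cref{dag id}, whence $e_D\circ e_D = r^\dag\circ(r\circ r^\dag)\circ r\aseq r^\dag\circ r = e_D$. By construction $(X_\inv,r^\dag,r)$ is already a splitting of $e_D$ in $\cat{PS(Stoch)}$. On the other hand, \Cref{idempsplit} says every idempotent of $\cat{PS(Borel)}$ splits there, so $e_D$ also admits a splitting $(E,\iota',\pi')$ with $E$ standard Borel, $e_D\aseq\iota'\circ\pi'$ and $\pi'\circ\iota'\aseq\id_E$; write $q\coloneqq\pi'\circ p$ for the induced state.

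The final step is the standard categorical fact that any two splittings of an idempotent are canonically isomorphic: the morphisms $\pi'\circ r^\dag:(X_\inv,p_\inv)\to(E,q)$ and $r\circ\iota':(E,q)\to(X_\inv,p_\inv)$ are mutually inverse, as a direct computation using $e_D\aseq\iota'\circ\pi'$, $\pi'\circ\iota'\aseq\id_E$ and $r\circ r^\dag\aseq\id_{X_\inv}$ shows (for instance $(r\circ\iota')\circ(\pi'\circ r^\dag)\aseq r\circ e_D\circ r^\dag\aseq\id_{X_\inv}$, and symmetrically). Hence $(X_\inv,p_\inv)$ is isomorphic in $\cat{PS(Stoch)}$ to the standard Borel object $(E,q)$, which is exactly the claim. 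The whole argument is essentially a bookkeeping assembly of the preceding results: the genuine mathematical content sits in \Cref{idempsplit}, so the only delicate point here is the \emph{existence} of $r^\dag$ — that disintegration producing a kernel $X_\inv\to X$ is available even though the invariant $\sigma$-algebra need not a priori be standard Borel — which is secured by the standard Borelness of the fibre space $X$. One could alternatively phrase the uniqueness step through \Cref{eqsplit}, since the splitting exhibits $X_\inv$ as an absolute (co)equalizer, but the explicit isomorphism above is cleaner.
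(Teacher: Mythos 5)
Your proof is correct and follows essentially the same route as the paper: both realize $(X_\inv,p_\inv)$ as a splitting of the idempotent $e_D=r^\dag\circ r$ (with $r^\dag$ existing by disintegration since $X$ is standard Borel) and then invoke \Cref{idempsplit} to obtain a standard Borel splitting. The only cosmetic difference is that the paper routes the uniqueness step through \Cref{invagain} and \Cref{eqsplit} (splitting as the absolute coequalizer of $e_D$ and the identity), whereas you inline that uniqueness by exhibiting the mutually inverse morphisms $\pi'\circ r^\dag$ and $r\circ\iota'$ explicitly.
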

	\begin{proof}
		Form the idempotent $e_D:X_\inv\to X_\inv$. By \Cref{invagain}, we can obtain the invariant object $X_\inv$ of $D$ equivalently as the coequalizer of $e_D$ and the identity. But this coequalizer is exactly the splitting of the idempotent $e_D$, which by \Cref{idempsplit} is standard Borel (up to isomorphism of $\cat{PS(Stoch)}$).
	\end{proof}

	Here is a measure-theoretic consequence.
	\begin{corollary}\label{subsigma}
		Let $(X,p)$ be a standard Borel space. Let $(X',p')$ be obtained by equipping $X$ with a smaller (coarser) $\sigma$-algebra, with $p'$ the restriction of $p$. Then $(X',p)$ is isomorphic to a standard Borel space up to isomorphism of $\cat{PS(Stoch)}$.
	\end{corollary}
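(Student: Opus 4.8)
The plan is to realise the coarsening $(X,\Sigma,p)\to(X,\Sigma',p')$ as one leg of a (dagger-)split idempotent living on the standard Borel space $X$, and then invoke \Cref{idempsplit} together with the uniqueness of idempotent splittings. The key point is that although the coarse space $(X,\Sigma',p')$ itself need not be standard Borel, the relevant \emph{idempotent} is a kernel between the standard Borel $(X,\Sigma)$ and itself, so \Cref{idempsplit} applies to it.

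First I would introduce the deterministic coarsening kernel $\pi\coloneqq k_\id:(X,\Sigma,p)\to(X,\Sigma',p')$ induced by the set-theoretic identity, exactly as in the proof of \Cref{sigmaiso}: it is $\Sigma'$-measurable since $\Sigma'\subseteq\Sigma$, and measure-preserving by definition of $p'$. Since $X$ is standard Borel it has disintegrations, so $\pi$ admits a Bayesian inverse $\pi^\dag:(X,\Sigma',p')\to(X,\Sigma,p)$, given $p'$-almost surely by the conditional-expectation kernel $A\mapsto\E[1_A\mid\Sigma']$. I would then form $e\coloneqq\pi^\dag\circ\pi:(X,\Sigma,p)\to(X,\Sigma,p)$. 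Although the intermediate object $(X,\Sigma',p')$ and the maps $\pi,\pi^\dag$ need not live in $\cat{PS(Borel)}$, the composite $e$ is a kernel from the standard Borel space $(X,\Sigma)$ to itself, hence a genuine morphism of $\cat{PS(Borel)}$. Because $\pi$ is deterministic, \Cref{dag id} gives $\pi\circ\pi^\dag\aseq\id_{(X,\Sigma',p')}$, so that $e\circ e=\pi^\dag\circ(\pi\circ\pi^\dag)\circ\pi\aseq\pi^\dag\circ\pi=e$, i.e.\ $e$ is idempotent, and by construction $(X,\Sigma',p')$ together with $\pi^\dag$ and $\pi$ splits $e$ in $\cat{PS(Stoch)}$.

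Finally, \Cref{idempsplit} provides a second splitting of $e$ through a \emph{standard Borel} space $(E,q)$, say with section $\iota^E$ and retraction $\pi^E$. Since splittings of an idempotent are unique up to a canonical isomorphism, the comparison map $\pi^E\circ\pi^\dag:(X,\Sigma',p')\to(E,q)$ and the map $\pi\circ\iota^E:(E,q)\to(X,\Sigma',p')$ are mutually inverse (one checks $(\pi\circ\iota^E)\circ(\pi^E\circ\pi^\dag)=\pi\circ e\circ\pi^\dag\aseq\id$ using the two splitting equations, and symmetrically for the other composite). Both splittings live in $\cat{PS(Stoch)}$, so this isomorphism does too, identifying $(X',p')=(X,\Sigma',p')$ with the standard Borel space $(E,q)$, which is exactly the claim.

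The one genuinely non-formal step is the construction of $\pi^\dag$: I would need the existence of regular conditional probabilities for $p$ given the sub-$\sigma$-algebra $\Sigma'$ on the standard Borel space $X$, which is what upgrades $A\mapsto\E[1_A\mid\Sigma']$ from an almost-sure family of set functions into an honest Markov kernel. This is precisely the disintegration property of standard Borel spaces already used in \Cref{sigmaiso}, and once it is granted everything downstream is formal category theory (idempotence via \Cref{dag id} and uniqueness of splittings), with the final standard-Borel conclusion supplied by \Cref{idempsplit}.
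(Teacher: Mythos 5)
Your proposal is correct and follows essentially the same route as the paper: the paper's (very terse) proof also takes the set-theoretic identity $r:X\to X'$, observes that $(X',p')$ splits the idempotent $r^\dag\circ r$ on the standard Borel space $(X,p)$, and invokes \Cref{idempsplit}. You have merely made explicit the details the paper leaves implicit -- existence of $r^\dag$ via disintegration, idempotence via \Cref{dag id}, and uniqueness of idempotent splittings to transport standard Borelness -- all of which are correct.
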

	\begin{proof}
		Denote by $r:X\to X'$ the set-theoretical identity. 
		$X'$ is the splitting of the idempotent $r^\dag\circ r$, so we can apply \Cref{idempsplit}.
	\end{proof}

	\subsection{The invariant $\sigma$-algebra as a limit}\label{limits}
	
	\Cref{cond implies dag} shows that when a Markov category $\cat{C}$ has conditionals, $\cat{PS(C)}$ is a dagger category.
	In this context, the invariant object $X_\inv$ and the idempotent $e_D$ can be interpreted in terms of the dagger. 
	The most important result of this paragraph is that $X_\inv$ is not only a \emph{colimit}, but also a \emph{limit}:
	
	\begin{theorem}\label{xinvlimit}
		Let $\cat{C}$ be a Markov category with conditionals, and such that $\cat{PS(C)}$ has all invariant objects (as for example for $\cat{C}=\cat{BorelStoch}$). 
		Consider a dynamical system $D$ in $\cat{PS(C)}$ on the object $(X,p)$, with monoid $M$.
		
		Then the colimit $(X_\inv, p_\inv)$ is also the \emph{limit} of $D$ in $\cat{PS(C)}$, with limiting cone given by $r^\dag:(X_\inv, p_\inv)\to(X,p)$, the Bayesian inverse of the colimiting cone $r:(X,p)\to(X_\inv, p_\inv)$.
	\end{theorem}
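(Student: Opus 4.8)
The plan is to deduce the limit statement from the colimit statement (\Cref{xinvexists}) using the duality between limits and colimits that holds in any dagger category (\cite[Chapter~4]{wayofdagger}). Given the system $D\colon M\to\cat{PS(C)}$, let $D^\dag\colon M^{\op}\to\cat{PS(C)}$ be the \emph{dagger-dual} system: it acts on the same object $(X,p)$ but sends each $m\in M$ to its Bayesian inverse $m^\dag$. This is functorial because $\cat{C}$ has conditionals (so every $m^\dag$ exists and $\cat{PS(C)}$ is a dagger category by \Cref{cond implies dag}) and because daggering is contravariant, $(m_1m_2)^\dag\aseq m_2^\dag m_1^\dag$. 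Daggering sends a cocone $c\colon(X,p)\to(Z,s)$ of $D^\dag$, i.e.\ a morphism with $c\circ m^\dag\aseq c$ for all $m$, to a cone $c^\dag\colon(Z,s)\to(X,p)$ of $D$, i.e.\ $m\circ c^\dag\aseq c^\dag$, bijectively and preserving the universal property. Hence it suffices to show that $(X_\inv,p_\inv)$ together with $r$ is a \emph{colimit of $D^\dag$}; the limit of $D=(D^\dag)^\dag$ will then be $(X_\inv,p_\inv)$ with limiting cone $r^\dag$, as asserted.

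To identify the colimit of $D^\dag$, I would show that $D$ and $D^\dag$ have exactly the same cocones, so that their colimits and colimiting cocones coincide. The content is the equivalence: a morphism $g\colon(X,p)\to(Z,s)$ satisfies $g\circ m\aseq g$ for all $m$ if and only if $g\circ m^\dag\aseq g$ for all $m$. Fix $m$ and assume $gm\aseq g$. Using that $\cat{C}$ has the Cauchy--Schwarz property (as do $\cat{Stoch}$ and $\cat{BorelStoch}$), \Cref{usingcauchy} upgrades this $p$-a.s.\ invariance to $p$-a.s.\ \emph{deterministic} invariance of $g$, which in terms of joints reads
\begin{equation*}
	(g\otimes m)\circ\cop_X\circ p \;\aseq\; (g\otimes\id_X)\circ\cop_X\circ p .
\end{equation*}
On the other hand, applying $(g\otimes\id_X)$ to the defining equation of the Bayesian inverse, $(\id_X\otimes m)\circ\cop_X\circ p\aseq(m^\dag\otimes\id_X)\circ\cop_X\circ p$, gives $(g\otimes m)\circ\cop_X\circ p\aseq(g\circ m^\dag\otimes\id_X)\circ\cop_X\circ p$. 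Combining the two displayed identities yields $(g\circ m^\dag\otimes\id_X)\circ\cop_X\circ p\aseq(g\otimes\id_X)\circ\cop_X\circ p$, that is, $g\circ m^\dag\aseq g$. The converse is symmetric, using $(m^\dag)^\dag\aseq m$ and that $m^\dag$ preserves $p$. I expect this to be the main obstacle: it is the categorical form of the classical fact that invariant (as opposed to merely harmonic) observables are insensitive to the time direction of the chain, and it is where the genuine probabilistic input enters, via the Cauchy--Schwarz property and the joint-distribution bookkeeping for $m^\dag$.

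With the two systems sharing all their cocones, $r\colon(X,p)\to(X_\inv,p_\inv)$ is a colimiting cocone for $D^\dag$: it is a cocone (apply the equivalence to $g=r$, using $rm\aseq r$), and its universal factorization property together with the determinism side-condition of \Cref{defxinv} is literally the one already proven for $D$ in \Cref{xinvexists}, since every cocone of $D^\dag$ is a cocone of $D$ and the mediating map $\tilde f$ is factored through the same $r$. Invoking the dagger duality of the first paragraph, $(X_\inv,p_\inv)$ equipped with the cone $r^\dag$ is then a limit of $D$, completing the proof.
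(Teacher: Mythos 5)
Your proposal follows the same skeleton as the paper's proof: pass to the time-reversed system $D^\dag$, use the dagger structure of $\cat{PS(C)}$ (\Cref{cond implies dag}) to turn cocones of $D^\dag$ into cones of $D$, show that $D$ and $D^\dag$ share the colimiting cocone $r$, and conclude that $r^\dag$ is a limiting cone. The genuine difference lies in how the central claim --- that $g$ is a.s.\ invariant for $D$ if and only if it is a.s.\ invariant for $D^\dag$ --- is established. The paper proves this (\Cref{right inv preserves backwards}) indirectly: it first treats the a.s.\ \emph{deterministic} case via \Cref{det preserves backwards} (quoted from \cite{fritz2021definetti}, and needing only conditionals), applies that to the two colimiting cocones to show that $D$ and $D^\dag$ have isomorphic colimits (\Cref{colimit reversed}), and only then deduces the statement for arbitrary morphisms. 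You instead prove it directly for arbitrary $g$: the Cauchy--Schwarz property (via \Cref{usingcauchy}) upgrades $gm\aseq g$ to deterministic invariance, whose marginal form (discard the extra output and use $mp=p$) combines with the defining equation of $m^\dag$ to give $gm^\dag\aseq g$. This computation is correct --- note only that your displayed equation is the \emph{marginalized consequence} of deterministic invariance rather than the condition itself, but that implication is the only one you use --- and it even has a structural advantage: it never needs an invariant object for $D^\dag$ to exist a priori, whereas \Cref{colimit reversed} presupposes one.

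The one real defect is a hypothesis mismatch. \Cref{xinvlimit} assumes only that $\cat{C}$ has conditionals and that $\cat{PS(C)}$ has all invariant objects; it does not assume the Cauchy--Schwarz property, and the paper establishes that property only for $\cat{Stoch}$ (hence $\cat{BorelStoch}$), never deriving it from conditionals. The paper's own proof is engineered to avoid Cauchy--Schwarz for exactly this reason: that axiom enters only the $\cat{PS(Stoch)}$-specific colimit construction (\Cref{xinvexists}), while the proof of \Cref{xinvlimit} leans on \Cref{det preserves backwards}, valid in any Markov category with conditionals. As written, your argument therefore proves the theorem for $\cat{C}=\cat{BorelStoch}$, and more generally for any $\cat{C}$ with conditionals satisfying Cauchy--Schwarz, but not in the stated generality. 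To repair it without changing its architecture, replace the Cauchy--Schwarz step by the paper's route to the same equivalence of cocones: apply \Cref{det preserves backwards} to the a.s.\ deterministic colimiting cocones of $D$ and $D^\dag$, deduce that the two colimits coincide, and then transfer invariance to arbitrary morphisms by factoring them through $r$.
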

	
	Note that we are not requiring, as we did in \Cref{defxinv}, that the limit is compatible with a.s.\ deterministic morphisms (but one can see an analogous compatibility condition on the Bayesian inverses).
	Also, notice that the statement does not just follow from the dagger structure of $\cat{PS(C)}$: the dagger structure, rather, tells us that the limit of $D$ is the colimit of the \emph{opposite} diagram of $D$, the diagram $\dagger\circ D^\op:M^\op\to\cat{PS(C)}$. 
	
	We will now prove the theorem using some auxiliary results.
	In the hypotheses of the theorem, let $f:(X,p)\to(Y,q)$ be an invariant morphism for the dynamical system $D$ (i.e.~cocone).
	Because $(X_\inv, p_\inv)$ is a colimit, $f$ uniquely factors through $(X_\inv, p_\inv)$ and $r$ via a unique $\tilde{f}$. 
	Moreover, since $\cat{C}$ has conditionals, we have a Bayesian inverse $r^\dag:(X_\inv,p_\inv)\to(X,p)$.
	The setting can be summarised in the following diagram:
	
	\[\begin{tikzcd}
		{(X, p)} \\
		& {(X_\inv, p_\inv)} && {(Y, q)} \\
		{(X, p)}
		\arrow["r", shift left, from=1-1, to=2-2]
		\arrow["{r^\dag}", shift left, from=2-2, to=1-1]
		\arrow["r"', shift right, from=3-1, to=2-2]
		\arrow["{r^\dag}"', shift right, from=2-2, to=3-1]
		\arrow["{m}", from=1-1, to=3-1]
		\arrow["f", curve={height=-12pt}, from=1-1, to=2-4]
		\arrow["f"', curve={height=12pt}, from=3-1, to=2-4]
		\arrow["{\tilde{f}}"', dashed, from=2-2, to=2-4]
	\end{tikzcd}\]
	
	We can notice that necessarily, $\tilde{f} \aseq_{p_\inv} f r^\dag$.
	Indeed, since $r^\dag$ is the Bayesian inverse of the $p$-a.s.~deterministic morphism $r$, by \Cref{dag id} we have $r r^\dag \aseq \id_{X_\inv}$, so that
	$$
	f r^\dag \aseq \tilde{f} r r^\dag \aseq \tilde{f} . 
	$$
	
	\begin{lemma}\label{rdaginv}
		In the hypotheses of \Cref{xinvlimit}, for all $m\in M$ we have that $r^\dag \aseq_{p_\inv} m r^\dag$, making the triangle of the diagram above involving $m$ and $r^\dag$ commute.
	\end{lemma}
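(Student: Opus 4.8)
The plan is to prove the cone condition $m\circ r^\dag \aseq_{p_\inv} r^\dag$ by comparing the two morphisms $m\circ r^\dag,\, r^\dag \colon (X_\inv,p_\inv)\to(X,p)$ as \emph{couplings}. Recall from \Cref{cond implies dag} that, since $\cat{C}$ has conditionals, $\cat{PS(C)}$ is equivalent to the category of probability spaces and couplings; consequently two parallel morphisms out of $(X_\inv,p_\inv)$ are $p_\inv$-a.s.\ equal if and only if their associated joint states of $X$ and $X_\inv$ coincide. I would therefore compute these two joints and show that they agree.

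First I would record the joint of $r^\dag$: by the defining property of the Bayesian inverse, pairing $r^\dag$ with $\id_{X_\inv}$ against $p_\inv$ reproduces the joint obtained by pairing $\id_X$ with $r$ against $p$, i.e.\ the graph of $r$. For $m\circ r^\dag$, pushing this joint forward by $m$ along the $X$-factor shows that the joint of $m\circ r^\dag$ equals the joint $\langle m,r\rangle$ taken under $p$, that is, the law of $(y,r(x))$ with $x\sim p$ and $y\sim m(\cdot\mid x)$. Thus everything reduces to the single equality of joints
$$ p\cdot\langle m, r\rangle \;=\; p\cdot\langle \id_X, r\rangle \qquad\text{on } X\otimes X_\inv . $$

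The crux is this last equality, and here I would use the invariance of $r$ in an essential way. Since $r$ is the colimiting cocone we have $r\circ m\aseq_p r$, so $r$ is a $p$-a.s.\ invariant morphism; by \Cref{usingcauchy} (using that $\cat{Stoch}$ has the Cauchy--Schwarz property) it is in fact $p$-a.s.\ \emph{deterministically} invariant. Deterministic invariance says precisely that the $X_\inv$-value is unaffected by a transition: $r(x)=r(x')$ almost surely when $x'\sim m(\cdot\mid x)$ and $x\sim p$. Substituting $r(x)=r(y)$ in the joint $(y,r(x))$ rewrites it as $(y,r(y))$ with $y\sim m\circ p$; and since $m$ preserves $p$ we have $m\circ p=p$, so this is exactly the graph $(\id_X,r)$ under $p$. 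This yields the desired equality of joints, hence $m\circ r^\dag\aseq_{p_\inv}r^\dag$. In the concrete language of $\cat{Stoch}$, this is the elementary computation $\int_S m(A\mid x)\,p(dx)=p(A\cap S)$ for every $A\in\Sigma_X$ and every invariant $S$, which follows by splitting $A$ along $S$ and using that $m(S\mid\cdot)=1_S$ holds $p$-a.s.\ together with measure preservation.

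I expect the main obstacle to be resisting the temptation to deduce the statement formally from the dagger. Daggering the cocone identity $r\circ m\aseq_p r$ only gives $m^\dag\circ r^\dag\aseq_{p_\inv} r^\dag$, with the \emph{Bayesian inverse} $m^\dag$ rather than $m$ itself; since $m$ need not be deterministic (the monoid $M$ is not assumed to be a group), $m^\dag$ differs from $m$ in general and this is not the cone condition we want. Bridging the gap between $m^\dag$ and $m$ is exactly where deterministic invariance of $r$ and measure preservation of $m$ enter, and is the reason (as already flagged after \Cref{xinvlimit}) that the limit property does not follow from the dagger structure alone.
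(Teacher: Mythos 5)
Your overall strategy coincides with the paper's: both proofs reduce the claim, via the defining property of the Bayesian inverse $r^\dag$, to the single equality of joints $p\cdot\langle m,r\rangle \aseq p\cdot\langle \id_X,r\rangle$, and your closing warning that daggering $rm\aseq_p r$ only produces $m^\dag r^\dag\aseq r^\dag$ is exactly the caveat the paper itself makes after \Cref{xinvlimit}. The genuine problem is the step where you justify that key equality. You invoke \Cref{usingcauchy}, which is stated only for Markov categories satisfying the Cauchy--Schwarz property (\Cref{cauchy_schwarz}). But \Cref{rdaginv} is asserted under the hypotheses of \Cref{xinvlimit}: an \emph{arbitrary} Markov category $\cat{C}$ with conditionals such that $\cat{PS(C)}$ has all invariant objects. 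The Cauchy--Schwarz property is not among these hypotheses: the paper treats it as an additional axiom, verified for $\cat{Stoch}$ only by citation to \cite{supports}, and nowhere derived from the existence of conditionals. So your argument, as written, proves the lemma for $\cat{C}=\cat{Stoch}$ or $\cat{BorelStoch}$ (and your concrete check $\int_S m(A|x)\,p(dx)=p(A\cap S)$ for invariant $S$ is correct there), but not in the stated generality; moreover the concrete check leans on the realization of $X_\inv$ as the invariant $\sigma$-algebra, which is again specific to $\cat{Stoch}$ rather than to an abstract invariant object.

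The repair is small and is what the paper actually does: you do not need the full strength of \Cref{usingcauchy}, because the morphism whose deterministic invariance you need is $r$, and $r$ is $p$-a.s.\ \emph{deterministic}. (Apply the second clause of \Cref{defxinv} to the cocone $r$ itself, whose induced map $\tilde{r}$ is the identity.) Consequently $rm\aseq_p r$ is also $p$-a.s.\ deterministic, being a.s.\ equal to an a.s.\ deterministic morphism, and one can apply \emph{relative positivity} to the composite $r\circ m$; this is available in full generality, since conditionals imply causality and causal Markov categories are relatively positive (see \Cref{causpos}). Marginalizing the resulting equation and then substituting $rm\aseq_p r$ and $m\circ p = p$ yields precisely $p\cdot\langle m,r\rangle \aseq p\cdot\langle\id_X,r\rangle$, i.e.\ the deterministic-invariance statement you wanted, with no appeal to Cauchy--Schwarz. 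The Cauchy--Schwarz axiom is genuinely needed only when the invariant morphism is not a.s.\ deterministic, which is why the paper reserves \Cref{usingcauchy} for the proof of \Cref{xinvexists} (where $f$ is an arbitrary invariant kernel) and avoids it here.
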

	This statement can be seen as a generalization of part of the proof of \cite[Theorem~3.15]{moss2022ergodic} for the case where the morphisms $m$ are not necessarily deterministic.
	As such, it will play a role in a version of the ergodic decomposition theorem (\Cref{erg_dec}).
	
	\begin{proof}
		Note first that we have $rm \aseq_p r$ and so $rm$ is $p$-a.s.~deterministic. This fact, together with the relative positivity of $\cat{C}$ (which we have assumed to be causal), allows us to derive the following diagram:
		\ctikzfig{rm_relative_positivity}
		which can be marginalized to obtain:
		\begin{equation}\label{rm RP csq}
			\tikzfig{rm_relative_positivity_csq}
		\end{equation}
		Using the dagger properties and \eqref{rm RP csq}, we have the following equalities:
		\ctikzfig{r_dag}
		But now we can use the fact that $rm \aseq_p r$, and so:
		\ctikzfig{m_r_dag}
		and so we obtain the string-diagrammatic proof that $r^\dag \aseq_{p_\inv} m r^\dag$.
	\end{proof}

	A convenient way to prove the theorem is by looking at the time-reversed dynamics, which we define here. 
	
	\begin{definition}
		Given a dynamical system $D$ on a category $\cat{E}$ with monoid $M$, we define the \emph{time-reversed dynamical system} $D^\dag$ to be the dynamical system on $\cat{E}$ with monoid $M^\op$ (formally, $D^\dag$ is the functor $\dag \circ D^\op : M^\op \to \cat{E}^\op \to \cat{E}$).
	\end{definition}
	
	Notice that if a morphism $m:(X,p)\to (X,p)$ of $\cat{PS(C)}$ preserves the state $p$, so does its dagger $m^\dag$ (since by \Cref{cond implies dag}, $\cat{PS(C)}$ is a dagger category).
	Moreover, the time-reversed dynamics is also compatible with those invariant morphisms which are almost surely deterministic (and one can think of as ``invariant observables''), as the following known statement shows.
	
	\begin{lemma}[{\cite[Lemma~5.2]{fritz2021definetti}}]\label{det preserves backwards}
		Let $\cat{C}$ be a Markov category with conditionals.
		Let $m:X\to X$ be a morphism preserving the state $p$ on $X$ (i.e.~giving a morphism $(X,p)\to(X,p)$ of $\cat{PS(C)}$), and take a Bayesian inverse $m^\dagger:X\to X$. If $f:X\to Y$ is a $p$-almost surely deterministic morphism, and it is a.s.~invariant for the reversed dynamics, that is,
		$$
		f m^\dag \aseq_p f ,
		$$
		then it is also a.s.~invariant for the forward dynamics:
		$$
		f m \aseq_p f .
		$$
	\end{lemma}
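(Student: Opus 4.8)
The plan is to verify that the two morphisms $fm$ and $f$ are $p$-a.s.\ equal by comparing the joint states they induce on $X\otimes Y$: I would transport the hypothesis $fm^\dag\aseq_p f$ across the Bayesian inversion relation between $m$ and $m^\dag$, and then invoke determinism of $f$ at the very end. First I would record the two facts that make the hypothesis usable. Since $m$ preserves $p$, its Bayesian inverse $m^\dag$ also preserves $p$ (as noted just before the statement), and by the definition of Bayesian inversion with both marginals equal to $p$ we have the identity of states on $X\otimes X$
$$
(\id_X\otimes m)\circ\cop_X\circ p \;=\; (m^\dag\otimes\id_X)\circ\cop_X\circ p,
$$
which I will call $(\star)$.

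Next I would postcompose $(\star)$ with $f$ on the \emph{first} leg. On the left this gives $(f\otimes m)\circ\cop_X\circ p$, while on the right it gives $(fm^\dag\otimes\id_X)\circ\cop_X\circ p$, which by the definition of $p$-a.s.\ equality (applied to $fm^\dag\aseq_p f$, up to the symmetry of $X\otimes X$) equals $(f\otimes\id_X)\circ\cop_X\circ p$. This yields the intermediate identity $(f\otimes m)\circ\cop_X\circ p = (f\otimes\id_X)\circ\cop_X\circ p$, using \emph{only} the hypothesis and $(\star)$, not determinism. I would then postcompose this intermediate identity with $f$ on the \emph{second} leg, obtaining $(f\otimes fm)\circ\cop_X\circ p = (f\otimes f)\circ\cop_X\circ p$. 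Here determinism of $f$ enters: $(f\otimes f)\circ\cop_X\circ p \aseq \cop_Y\circ(fp) = \cop_Y\circ q$, so the right-hand side is the perfectly correlated (``diagonal'') coupling of $q$ with itself, and hence the $Y\otimes Y$-joint $(f\otimes fm)\circ\cop_X\circ p$ is diagonal. This is exactly the point at which the argument must break in the strict (non-a.s.) setting, matching \Cref{seanexample}, and it is the conceptual heart of the proof.

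Finally I would lift this diagonal identity back to a statement involving the input $X$. Consider the threefold joint $\sigma = (\id_X\otimes f\otimes fm)\circ\cop_X^{(3)}\circ p$ on $X\otimes Y\otimes Y$, whose marginal on the last two factors is the diagonal coupling just computed. Taking a conditional of $\sigma$ over its $Y\otimes Y$-marginal (which exists because $\cat{C}$ has conditionals) and using that $\cop_Y\circ q$ forces the two $Y$-legs to carry the same value, the marginal of $\sigma$ on factors $(1,2)$ and the marginal on factors $(1,3)$ must coincide. Since these are precisely $(\id_X\otimes f)\circ\cop_X\circ p$ and $(\id_X\otimes fm)\circ\cop_X\circ p$, this gives $f\aseq_p fm$, as desired. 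The main obstacle I anticipate is making this last ``perfectly-correlated legs are interchangeable'' step precise purely diagrammatically; I expect it to follow cleanly from the defining equation of conditionals together with the copy-comonoid coherence, without needing positivity or causality beyond the standing assumption of conditionals.
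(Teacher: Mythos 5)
Your proof is correct, and it is necessarily a different route from the paper's for a simple reason: the paper does not prove this lemma at all, but imports it wholesale from \cite[Lemma~5.2]{fritz2021definetti}. Judged on its own terms, every step of your argument checks out. Your $(\star)$ is exactly the defining equation of the Bayesian inverse specialized to $mp=p$ (combined with cocommutativity of $\cop$), and postcomposing one leg of an equality of joint states preserves it, so you correctly obtain
\[
(f\otimes m)\circ\cop_X\circ p=(f\otimes\id_X)\circ\cop_X\circ p
\qquad\text{and then}\qquad
(f\otimes fm)\circ\cop_X\circ p=(f\otimes f)\circ\cop_X\circ p=\cop_Y\circ q ,
\]
the last equality being the marginalized form of $p$-a.s.\ determinism of $f$ (with $q=f\circ p$). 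The step you flagged as the potential obstacle does go through exactly as you anticipated, using only conditionals and the comonoid laws: reorder $\sigma$ as a state on $(Y\otimes Y)\otimes X$ and take a conditional $c:Y\otimes Y\to X$ given its $(Y\otimes Y)$-marginal $\cop_Y\circ q$; since $\cop_{Y\otimes Y}\circ\cop_Y\circ q$ is the four-fold copy of $q$ by coassociativity and cocommutativity, both marginals of $\sigma$ on $X\otimes Y$ come out equal to $((c\circ\cop_Y)\otimes\id_Y)\circ\cop_Y\circ q$ up to a swap, which is precisely $f\aseq_p fm$. Two remarks on what your route buys. First, your step 4 isolates a clean reusable principle --- a joint state whose $(Y\otimes Y)$-marginal is the diagonal coupling cannot distinguish its two $Y$-legs --- which is the coupling characterization of almost-sure equality implicit in \Cref{cond implies dag}; in dagger terms, your step 3 says exactly that $fm\circ f^\dag\aseq_q\id_Y$, and this principle is what allows the passage from that to $fm\aseq_p f$, an implication that dagger algebra alone cannot provide (since $f^\dag f\not\aseq_p\id_X$ in general). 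Second, your remark about \Cref{seanexample} is apt: every identity in your chain is an equation between joints built from $p$, so the argument is intrinsically almost-sure and could never prove the (false) strict statement.
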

	
	By taking $m^\dag$ instead of $m$, one can similarly prove that if $fm \aseq_p f$, then $fm^\dag = f$. 
	
	In the language of \cite{wayofdagger}, we are saying that $X_\inv$ is also the dagger limit of the functor $\cat{ZigZag}(M)\to\cat{PS(C)}$ corresponding to $D:M\to\cat{PS(C)}$ (see the reference for the terminology).
	Equivalently, this is saying that if all invariant objects exist, then the canonical functor $\cat{PS(C)}\to\cat{PS(C)}^M$ has an \emph{ambidextrous} adjoint.\footnote{We thank Tobias Fritz for pointing this out.}
	
	Now, and throughout this section, denote by $(\overbar{X_\inv}, \overbar{p_\inv})$ the invariant object for the dynamical system $D^\dag$. (It exists in the hypotheses of \Cref{xinvlimit}, since we are assuming that all invariant objects exist.) As we will prove, from the dagger structure it follows that this object is the \emph{limit} of $D$ in $\cat{PS(C)}$, and from the lemma above, we will see that it is in fact also isomorphic (in $\cat{PS(C)}$) to $(X_\inv, p_\inv)$ (the colimit for $D$).

	\begin{lemma}\label{colimit reversed}
		Let $(X_\inv, p_\inv)$ be the colimit for the dynamics $D$, and $(\overbar{X_\inv}, \overbar{p_\inv})$ be the colimit for the reversed dynamics $D^\dag$. Then $(X_\inv, p_\inv)$ and $(\overbar{X_\inv}, \overbar{p_\inv})$ are isomorphic up to a unique isomorphism commuting with the colimiting cocones.
	\end{lemma}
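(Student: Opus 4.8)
The plan is to realize the two invariant objects as colimits of \emph{mutually refining} cocones: I will show that the colimiting cocone $r:(X,p)\to(X_\inv,p_\inv)$ of $D$ is also a cocone for the reversed dynamics $D^\dag$, and that the colimiting cocone $\overbar{r}:(X,p)\to(\overbar{X_\inv},\overbar{p_\inv})$ of $D^\dag$ is also a cocone for $D$. Once both transfers are established, the two universal properties produce comparison maps in each direction, and a routine uniqueness argument shows they are mutually inverse.

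First I would record that $r$ and $\overbar{r}$ are both $p$-a.s.\ deterministic. For $r$ this is because it is a cocone for $D$ that factors through $X_\inv$ via $\tilde{r}\aseq\id_{X_\inv}$, which is a.s.\ deterministic; by the second clause of \Cref{defxinv} this forces $r$ to be a.s.\ deterministic, and the same reasoning applies to $\overbar{r}$ for $D^\dag$. Next comes the only substantive step. Since $r$ is a cocone for $D$ we have $r\circ m\aseq_p r$ for every $m\in M$; because $r$ is a.s.\ deterministic, the remark following \Cref{det preserves backwards} (the forward-to-backward direction) gives $r\circ m^\dag\aseq_p r$, so $r$ is right-invariant for the reversed dynamics and hence a cocone for $D^\dag$. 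Symmetrically, $\overbar{r}$ satisfies $\overbar{r}\circ m^\dag\aseq_p\overbar{r}$, and applying \Cref{det preserves backwards} (the backward-to-forward direction) together with a.s.\ determinism of $\overbar{r}$ yields $\overbar{r}\circ m\aseq_p\overbar{r}$, so $\overbar{r}$ is a cocone for $D$.

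Finally I would invoke the universal properties. As $(\overbar{X_\inv},\overbar{p_\inv})$ is the colimit of $D^\dag$ and $r$ is a cocone for $D^\dag$, there is a unique $\alpha:(\overbar{X_\inv},\overbar{p_\inv})\to(X_\inv,p_\inv)$ with $\alpha\circ\overbar{r}\aseq r$; dually, as $(X_\inv,p_\inv)$ is the colimit of $D$ and $\overbar{r}$ is a cocone for $D$, there is a unique $\beta:(X_\inv,p_\inv)\to(\overbar{X_\inv},\overbar{p_\inv})$ with $\beta\circ r\aseq\overbar{r}$. Then $\beta\circ\alpha\circ\overbar{r}\aseq\beta\circ r\aseq\overbar{r}$, and since $\id\circ\overbar{r}\aseq\overbar{r}$ as well, the uniqueness clause of the colimit $(\overbar{X_\inv},\overbar{p_\inv})$ forces $\beta\circ\alpha\aseq\id$; the symmetric computation gives $\alpha\circ\beta\aseq\id$, so $\alpha$ and $\beta$ are mutually inverse. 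Uniqueness of the isomorphism commuting with the cocones is exactly the uniqueness of the factorization $\beta$ in the universal property of $(X_\inv,p_\inv)$. The main obstacle is the transfer step: everything else is the standard ``two colimits with mutually refining cocones are canonically isomorphic'' argument, and all the probabilistic content sits in the application of \Cref{det preserves backwards} together with the a.s.\ determinism of the colimiting cocones.
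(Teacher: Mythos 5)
Your proposal is correct and follows essentially the same route as the paper's own proof: establish that both colimiting cocones are a.s.\ deterministic, transfer the cocone property between $D$ and $D^\dag$ via \Cref{det preserves backwards} (and its converse remark), and then play the two universal properties against each other to obtain mutually inverse comparison maps, with uniqueness of the isomorphism coming from uniqueness of the factorizations. The only (welcome) difference is that you explicitly justify a.s.\ determinism of the cocones via the second clause of \Cref{defxinv}, a point the paper asserts without argument.
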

	
	In other words, up to isomorphism of $\cat{PS(C)}$, $D$ and $D^\dagger$ have the same colimit.
	Note that we are \emph{not} saying that it follows from the definitions that the functor $\dag:\cat{PS(C)}^\op\to\cat{PS(C)}$ preserves colimits. Since the dagger is a contravariant functor, it turns colimits for the dynamics $D$ into limits for the time-reversed dynamics $D^\dag$ (by applying the dagger to the whole diagram). What we are proving here is that it turns out the two colimits for the different dynamics $D$ and $D^\dag$ are actually isomorphic (up to a unique isomorphism).
	
	\begin{proof}
		Let $r : (X, p) \to (X_\inv, p_\inv)$ and $s : (X, p) \to (\overbar{X_\inv}, \overbar{p_\inv})$ be the colimiting cocones of $D$ and $D^\dag$, respectively, and note that both are almost surely deterministic. 
		
		Thanks to \Cref{det preserves backwards}, we have that $r$, which is right-invariant for $D$, is right-invariant also for the time-reversed dynamics $D^\dag$, and so there exists a unique $\tilde{r} : (\overbar{X_\inv}, \overbar{p_\inv}) \to (X_\inv, p_\inv)$ such that $r \aseq \tilde{r} s$. 
		
		Similarly, $s$ is right-invariant not only for $D^\dag$, but also for $D$, and so there exists a unique $\tilde{s} : (X_\inv, p_\inv) \to (\overbar{X_\inv}, \overbar{p_\inv})$ such that $s \aseq \tilde{s} r$. 
		
		This means that all the triangles in the following diagram commute:
		\[\begin{tikzcd}
			{(X, p)} \\
			& {(X_\inv, p_\inv)} && {(\overbar{X_\inv}, \overbar{p_\inv})} && {(X_\inv, p_\inv)} \\
			{(X, p)}
			\arrow["r", shift right, from=1-1, to=2-2]
			\arrow["r"', shift left, from=3-1, to=2-2]
			\arrow["{m^\dag}", shift left, from=3-1, to=1-1]
			\arrow["s"', curve={height=-6pt}, from=1-1, to=2-4]
			\arrow["{\tilde{s}}"{description}, dashed, from=2-2, to=2-4]
			\arrow["m", shift left, from=1-1, to=3-1]
			\arrow["{\tilde{r}}"{description}, dashed, from=2-4, to=2-6]
			\arrow["r", curve={height=-12pt}, from=1-1, to=2-6]
			\arrow["s", curve={height=6pt}, from=3-1, to=2-4]
			\arrow["r"', curve={height=12pt}, from=3-1, to=2-6]
		\end{tikzcd}\]
		
		Now, $\tilde{r} \circ \tilde{s} : (X_\inv, p_\inv)$ is a morphism that makes the diagram commute, but so does $\id_{X_\inv} : (X_\inv, p_\inv) \to (X_\inv, p_\inv)$, and so, by uniqueness, $\tilde{r} \circ \tilde{s} \aseq \id_{X_\inv}$. 
		
		Similarly, if we swap the roles of $r$ and $s$ (and of $(X_\inv, p_\inv)$ and $(\overbar{X_\inv}, \overbar{p_\inv})$), we obtain that $\tilde{s} \circ \tilde{r} \aseq \id_{\overbar{X_\inv}}$, and so $(X_\inv, p_\inv)$ and $(\overbar{X_\inv}, \overbar{p_\inv})$ are isomorphic up to an a.s.\ unique isomorphism commuting with $r$ and $s$.
	\end{proof}
	
	As a consequence, we can extend \Cref{det preserves backwards} to morphisms which are not necessarily a.s.~deterministic:\footnote{In order to generalize from \Cref{det preserves backwards} we have used the additional assumption that invariant objects exist.}
	
	\begin{corollary}\label{right inv preserves backwards}
		Under the hypotheses of \Cref{xinvlimit}, let $f : X\to Y$ be any morphism.
		Then $f$ is $p$-almost surely invariant for $D$, i.e.~for all $m\in M$, $f m\aseq_p f$ if and only if it is $p$-almost surely invariant for $D^\dag$, i.e.~$f m^\dag \aseq_p f$.
	\end{corollary}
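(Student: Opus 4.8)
The plan is to reduce both invariance conditions to a single statement about factorization through a colimiting cocone, and then appeal to \Cref{colimit reversed}, which identifies the forward and reversed colimits. The determinism hypothesis of \Cref{det preserves backwards} then disappears entirely, at the cost of assuming (as we do here) that invariant objects exist.

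First I would record the reformulation of invariance supplied by the colimit universal property. Let $r:(X,p)\to(X_\inv,p_\inv)$ be the colimiting cocone of $D$ and $s:(X,p)\to(\overbar{X_\inv},\overbar{p_\inv})$ the colimiting cocone of $D^\dag$ (both exist under the hypotheses of \Cref{xinvlimit}). Since $r$ is itself $p$-a.s.~invariant for $D$, any morphism of the form $\tilde f\circ r$ is again $p$-a.s.~invariant for $D$, using $rm\aseq_p r$; conversely, \Cref{defxinv} guarantees that every $p$-a.s.~invariant $f$ factors, uniquely up to a.s.~equality, as $\tilde f\circ r$. Hence $f$ is $p$-a.s.~invariant for $D$ if and only if $f\aseq_p \tilde f\circ r$ for some $\tilde f:(X_\inv,p_\inv)\to(Y,q)$. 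The identical argument with $s$ in place of $r$ shows that $f$ is $p$-a.s.~invariant for $D^\dag$ if and only if $f$ factors through $s$.

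Next I would invoke \Cref{colimit reversed}, which supplies mutually inverse isomorphisms $\tilde s:(X_\inv,p_\inv)\to(\overbar{X_\inv},\overbar{p_\inv})$ and $\tilde r:(\overbar{X_\inv},\overbar{p_\inv})\to(X_\inv,p_\inv)$ commuting with the cocones, i.e.~$s\aseq_p \tilde s\circ r$ and $r\aseq_p \tilde r\circ s$. With these in hand the two factorization conditions become interchangeable: if $f\aseq_p\tilde f\circ r$, then $f\aseq_p \tilde f\circ\tilde r\circ s$, exhibiting a factorization through $s$; and conversely, if $f\aseq_p \tilde g\circ s$, then $f\aseq_p\tilde g\circ\tilde s\circ r$, a factorization through $r$. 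Combining this with the reformulation above gives the claimed equivalence between $p$-a.s.~invariance for $D$ and for $D^\dag$.

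Since the result is a purely formal consequence of \Cref{colimit reversed}, I do not expect a genuine obstacle. The only care needed is bookkeeping: keeping track of which cocone each factorization runs through, and remembering that the factorizing morphisms $\tilde f,\tilde g$ (like the isomorphisms $\tilde r,\tilde s$) are determined only up to almost-sure equality, so that every equality in the argument should be read in $\cat{PS(C)}$. The conceptual content—and the reason the determinism assumption can be dropped—is entirely absorbed into the existence of invariant objects and the identification of the forward and reversed colimits.
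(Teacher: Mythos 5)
Your proof is correct and matches the paper's intended argument: the paper states this corollary without an explicit proof, presenting it precisely ``as a consequence'' of \Cref{colimit reversed}, and your writeup—characterizing $p$-a.s.\ invariance for $D$ (resp.\ $D^\dag$) as factorization through the colimiting cocone $r$ (resp.\ $s$), then transporting factorizations across the isomorphism $\tilde r,\tilde s$—is exactly the argument the paper leaves implicit. The bookkeeping you flag (all equalities read in $\cat{PS(C)}$, factorizations unique only up to a.s.\ equality) is handled correctly, including the needed converse step that anything factoring through $r$ is invariant because $rm \aseq_p r$.
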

	
	We are now ready to prove the main theorem of this section.
	
	\begin{proof}[Proof of \Cref{xinvlimit}]
		The setting we are looking at now is as follows: suppose we have $g : (A, q) \to (X, p)$ that is (a.s.)~\emph{left}-invariant, i.e.~$mg \aseq g$ for all $m\in M$. 
		We have to prove that there exists a unique (almost surely) morphism $\tilde{g}:(A,q)\to (X_\inv,p_\inv)$ such that $g\aseq r^\dag\tilde{g}$, as in the following diagram.
		\[\begin{tikzcd}
			&&&& {(X, p)} \\
			\\
			{(A, q)} &&& {(X_\inv, p_\inv)} \\
			\\
			&&&& {(X, p)}
			\arrow["r", shift left, from=1-5, to=3-4]
			\arrow["{r^\dag}", shift left, from=3-4, to=1-5]
			\arrow["r"', shift right, from=5-5, to=3-4]
			\arrow["{r^\dag}"', shift right, from=3-4, to=5-5]
			\arrow["m", shift left, from=1-5, to=5-5]
			\arrow["g", shift left, curve={height=12pt}, from=3-1, to=5-5]
			\arrow["{\tilde{g}}", dashed, from=3-1, to=3-4]
			\arrow["g"', shift right, curve={height=-12pt}, from=3-1, to=1-5]
			\arrow["{g^\dag}"', shift right, curve={height=12pt}, from=1-5, to=3-1]
			\arrow["{g^\dag}", shift left, curve={height=-12pt}, from=5-5, to=3-1]
			\arrow["{m^\dag}", shift left, from=5-5, to=1-5]
		\end{tikzcd}\]
		
		Taking the Bayesian inverse of both sides of $mg \aseq g$, we obtain $g^\dag m^\dag \aseq g^\dag$.
		By \Cref{colimit reversed}, $(X_\inv, p_\inv)$ is also a \emph{colimit} for the dynamics $D^\dag$, and so there exists (a unique) $\widetilde{(g^\dag)}$ such that $g^\dag \aseq \widetilde{(g^\dag)} r$.
		
		Taking Bayesian inverses again, we get $g \aseq r^\dag \widetilde{(g^\dag)}^\dag$, so taking $\tilde{g}\coloneqq \widetilde{(g^\dag)}^\dag$ proves the desired result.
		
		Now let $h : (A, q) \to (X_\inv, p_\inv)$ be another morphism such that $h \aseq r^\dag h$. But then, using the fact that $r r^\dag \aseq \id_{X_\inv}$, we can write:
		$$
		\tilde{g} \aseq r r^\dag \tilde{g} \aseq rg \aseq r r^\dag h \aseq h
		$$
		and so $h$ and $\tilde{g}$ are $q$-a.s. equal (and so equal in $\cat{PS(Stoch)}$).
	\end{proof}

	\subsection{Ergodic decompositions}\label{ergodic}
	
	We can interpret \Cref{xinvlimit} as an almost sure version of the ergodic decomposition theorem. 
	An ergodic decomposition theorem in terms of Markov categories was stated and proven in \cite{moss2022ergodic}, and the present result is an improvement in the following ways:
	\begin{itemize}
		\item It also holds in the case where the morphisms $m:X\to X$ are not deterministic;
		\item It also proves the \emph{uniqueness} of the ergodic decomposition.
	\end{itemize}
	The price to pay is that everything is done almost surely instead of strictly, but that is sufficient, for example, to interpret de Finetti's theorem, (see the next section\footnote{Also, \Cref{seanexample} can be used to show that the strictly invariant (not just almost surely) $\sigma$-algebra is not a colimit in the nondeterministic case, and so it does not seem to have a categorical description.}).
	Our proof seems also to be the only one in the literature, for the nondeterministic case, where one does not reduce the statement to the deterministic case via an embedding (and so it is less sensitive to the particular structure of the monoid -- in particular it does not need to be $\N$, or even free). 
	
	First of all, let us see what we mean by \emph{decomposition} in this categorical setting. (See also \cite[Section 3.1]{moss2022ergodic} and \cite[Example 1.2]{fritz2022dilations} for additional context, but keep in mind that the present setting is slightly different.)
	
	Consider a probability measure $p$ on a standard Borel space $X$.
	We can view it as a morphism $p:1\to X$ of $\cat{BorelStoch}$, or even as the unique morphism $p:(1,u)\to (X,p)$, where $u$ denotes the unique probability measure on the one-point space $1$. 
	
	Let now $A$ be another space, with a probability measure $b:1\to A$, and suppose we have a morphism $k:(A,b)\to (X,p)$ of $\cat{PS(Borel)}$. That means that for every measurable subset $B\subseteq X$,
	$$
	p(B) = \int_A k(B|a)\,b(da) .
	$$
	Equivalently, we can write the equation above as a measure-valued integral,
	\begin{equation}\label{convcomb}
		p = \int_A k_a\,b(da)
	\end{equation}
	where $k_a$ is the measure on $X$ given by $B\mapsto k(B|a)$. 
	We can interpret equation \eqref{convcomb} as the fact that \emph{we are expressing the measure $p$ as a convex mixture of the measures $k_a$, indexed by $a\in A$ and with weights given by the measure $b$}. 
	If $A$ is finite, in particular we have
	\begin{equation}\label{convcombfinite}
		p = \sum_{a\in A} k_a \,b(a) ,
	\end{equation}
	where the $b(a)$ are non-negative numbers summing to $1$, i.e.~we are writing $p$ as a finite convex combination of the measures $k_a$.
	Notice also that morphisms in $\cat{PS(C)}$ are defined only up to almost sure equality, and this is compatible with our interpretation as convex mixture, since the integral \eqref{convcomb}, as well as its finite analogue, do not change if we replace some of the $k_a$ for $a$ in a subset that is given measure (or weight) zero by $b$. 
	
	Suppose now that two terms in \eqref{convcombfinite} are equal, say, $k_a=k_{a'}$, for $a\neq a'$.
	Then we can \emph{reduce} the expression \eqref{convcombfinite} by combining $k_a$ and $k_{a'}$, replacing their terms in the sum by a single term with the sum of the weights, such as $k_a\,(b(a)+ b(a'))$.
	The result is the same, but the expression is now simpler.
	What happened is that the kernel $k:A\to X$ actually (a.s.)~\emph{factors through a deterministic one}, as follows,
	$$
	\begin{tikzcd}[row sep=small]
		(A,b) \ar{dr}[swap]{d} \ar{rr}{k} && (X,p) \\
		& (A',b') \ar{ur}[swap]{k'}
	\end{tikzcd}
	$$
	where $d$ is deterministic (and not injective: $d(a)=d(a')$).
	When this happens, we call the decomposition given by $k'$ a \emph{reduction} of the one given by $k$. 
	
	Let us now generalize this to kernels. We can view a kernel $f:Y\to X$ as an indexed family of probability measures, measurably parametrized by $Y$  (i.e.~through the mapping $y\mapsto f_y$), and we want to decompose it pointwise (almost surely).
	Given measures $p$ on $X$ and $q$ on $Y$, if $f$ gives a morphism $(Y,q)\to(X,p)$ we can view it as an indexed family of probability measures on $X$, but with a notion of ``where the bulk of them is'', or ``which ones are the typical, observable ones''. In particular, we can view $f$ as a parametrized family of measures ``which make up $p$'', or whose ``collective fuzzy image is $p$''.
	If now we consider a probability space $(A,b)$ and a measure-preserving kernel $k:(A,b)\to(X,p)$, we might want to ask if, just as we did above for $p$, we can express (almost) \emph{all} of the measures $f_y$ in terms of the measures $k_a$. That is, if we can express (almost) each $f_y$ as a mixture of the $k_a$. Explicitly, we would like, almost surely, to have
	$$
	f_y = \int_A k_a \,g_y(da)
	$$
	for some measure $g_y$ on $A$. If we want this to depend measurably on $y$, this amounts exactly to a Markov kernel $g:Y\to A$, and we are asking that $q$-almost surely, the following diagram commutes.
	$$
	\begin{tikzcd}
		(Y,q) \ar{rr}{f} \ar{dr}[swap]{g} && (X,p) \\
		& (A,b) \ar{ur}[swap]{k}
	\end{tikzcd}
	$$
	In other words, expressing a kernel (seen as a parametrized family of measures) pointwise as a convex combination of other kernels amounts to expressing it (almost surely) as the composition of two kernels. 
	The diagram above expresses $f$ as a convex combination in terms of $k$ (i.e.~of the $k_a$), with coefficients or weighting given by $g$.
	
	\begin{definition}
		A \emph{decomposition} of a morphism $f:(Y,q)\to(X,p)$ is a factorization of $f$ in $\cat{PS(C)}$. In other words, it consists of an object $(A,b)$ and morphisms (i.e.~equivalence classes) $g:(Y,q)\to(A,b)$ and $k:(A,b)\to (X,p)$ such that $f\aseq_q k\circ g$.
		
		Given decompositions $(g,k)$ and $(g',k')$ of $f$, respectively through $(A,b)$ and $(A',b')$, we say that $(g',k')$ is a \emph{reduction} of the decomposition $(g,k)$ if there exists an a.s.~deterministic morphism $d:(A,b)\to(A',b')$ such that $k'\circ d\aseq k$ and $d\circ g\aseq g'$, as in the following commutative diagram:
		\[\begin{tikzcd}[sep=tiny]
			&&& {(A', b')} \\
			{(Y, q)} &&&&& {(X, p)} \\
			&& {(A, b)}
			\arrow["{g'}", from=2-1, to=1-4]
			\arrow["{k'}", from=1-4, to=2-6]
			\arrow["g"', from=2-1, to=3-3]
			\arrow["k"', from=3-3, to=2-6]
			\arrow["d", dotted, from=3-3, to=1-4]
		\end{tikzcd}\]
	\end{definition}

	Let us now define what we mean by ergodicity. (See \cite[Section 3.3]{moss2022ergodic} for additional context, once again keeping in mind that the present setting is slightly different.)
	Recall that classically, an ergodic measure is an invariant measure which evaluates to $0$ or $1$ on all (almost surely) invariant sets.
	
	\begin{definition}\label{deferg}
		Let $\cat{C}$ be a causal Markov category.
		Let $D$ be a dynamical system $M\to\cat{PS(C)}$ acting on the object $(X,p)$, with invariant object $r:(X,p)\to(X_\inv,p_\inv)$.
		The state $p$ is called \emph{ergodic} for $D$ if any of the following equivalent conditions hold:
		\begin{itemize}
			\item The state $p_\inv=r p$ on the resulting invariant object (with colimiting cocone $r$) is deterministic;
			\item For every $p$-a.s.~deterministic, $p$-a.s.~invariant $f:(X,p)\to (Y,q)$, the state $q$ is deterministic.  
		\end{itemize}
	\end{definition}

	(To see that the two conditions are equivalent, take $f=r$.)
	For $\cat{C}=\cat{Stoch}$, this corresponds to the usual definition in terms of a.s.\ invariant sets.
	
	Let us now define what we mean by ergodicity in the context of a convex decomposition: roughly, it's a parametrized version of ergodic states, or a mixture such as \eqref{convcomb} where almost all the values of the integrand are ergodic measures in a certain sense.
	First of all, let's define precisely in which sense we mean that all the values in the integrand are almost surely ergodic.
	\begin{definition}
		Let $D$ be a dynamical system $M\to\cat{PS(C)}$ acting on the object $(X,p)$, with invariant object $r:(X,p)\to(X_\inv,p_\inv)$.
		Let $k:(A,q)\to(X,p)$ be a morphism. We say that $k$ is \emph{$q$-almost surely ergodic} if and only if 
		\begin{itemize}
			\item $rk:A\to X_\inv$ is $q$-almost surely deterministic;
			\item $k$ is $q$-almost surely (left-)invariant.
		\end{itemize}
	\end{definition}
	
	(Note that for the case of $\cat{C}=\cat{BorelStoch}$, we are taking $k$ to be almost surely invariant for every $m$ separately. Under some conditions, such as if $M$ is countable, this is equivalent to say that $k$ is almost surely invariant jointly for all $m$.)
	
	Let us now put the two ideas together, defining what we mean by ``almost sure ergodic decomposition''.
	
	\begin{definition}
		Let $D$ be a dynamical system $M\to\cat{PS(C)}$ acting on the object $(X,p)$, with invariant object $r:(X,p)\to(X_\inv,p_\inv)$.
		Let $g:(Y, q)\to (X, p)$. An \emph{almost sure ergodic decomposition} of $g$ is a decomposition $g\aseq k\circ h$, where $k$ is almost surely ergodic. 
	\end{definition}
	
	We have that
	\begin{itemize}
		\item If $g:(Y, q)\to (X, p)$ admits an a.s.\ ergodic decomposition, then it is necessarily a.s.\ left-invariant, since $k$ is (and left-invariant morphisms are closed under precomposition);
		\item If $k:(A,q)\to(X,p)$ is almost surely ergodic, and $d:(B,s)\to(A,q)$ is almost surely deterministic, then the precomposition $k\circ d$ is almost surely ergodic too.
		\item Therefore, if $(h',k')$ is an a.s.\ ergodic decomposition of $g$ that reduces a decomposition $(h,k)$, then $(h,k)$ is an a.s.\ ergodic decomposition too (since $k=k'\circ d$ is a.s.~ergodic too).
	\end{itemize}

	We are now ready for the main statement.
	
	\begin{theorem}\label{erg_dec}
		Let $\cat{C}$ be a Markov category with conditionals, and suppose that all invariant objects of $\cat{PS(C)}$ exist. (For example, take $\cat{C}=\cat{BorelStoch}$.)
		Let $D$ be a dynamical system $M\to\cat{PS(C)}$ acting on the object $(X,p)$, with invariant object $r:(X,p)\to(X_\inv,p_\inv)$.
		Then 
		\begin{itemize}
			\item The morphism $r^\dagger:(X_\inv,p_\inv)\to(X,p)$ is almost surely ergodic;
			\item Every left-invariant morphism $g:(Y,q)\to (X,p)$ can be a.s.\ ergodically decomposed as $r^\dag\circ \tilde{g}$ for a unique $\tilde{g}:(Y,q)\to(X_\inv,p_\inv)$ (up to $q$-a.s.~equality);
			\item For every left-invariant morphism $g:(Y, q)\to (X, p)$, any (other) a.s.\ ergodic decomposition of $g$ can be uniquely reduced to the decomposition $(\tilde{g}, r^\dag)$.
		\end{itemize}
	\end{theorem}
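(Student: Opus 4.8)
The plan is to prove the three items in order, in each case reducing the claim to the limit universal property of \Cref{xinvlimit} together with the identity $r\,r^\dag\aseq\id_{X_\inv}$, which holds by \Cref{dag id} since $r$ is $p$-a.s.\ deterministic. A formula I would extract at the outset and reuse throughout is $\tilde{g}\aseq r\circ g$: applying $r$ to the defining relation $g\aseq r^\dag\tilde{g}$ and using $r\,r^\dag\aseq\id$ gives $r\,g\aseq r\,r^\dag\tilde{g}\aseq\tilde{g}$.

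For the first item I would verify the two defining conditions of almost-sure ergodicity for $k=r^\dag$. Left-invariance, $r^\dag\aseq_{p_\inv}m\,r^\dag$ for all $m\in M$, is precisely \Cref{rdaginv}. The determinism condition asks that $r\circ r^\dag$ be $p_\inv$-a.s.\ deterministic, and this is immediate from $r\,r^\dag\aseq\id_{X_\inv}$ (a morphism a.s.\ equal to a deterministic one is deterministic). The second item is then a direct consequence of \Cref{xinvlimit}: since $g$ is left-invariant, the limit universal property (with limiting cone $r^\dag$) yields a unique $\tilde{g}\colon(Y,q)\to(X_\inv,p_\inv)$ with $g\aseq r^\dag\circ\tilde{g}$, and this factorization is an \emph{ergodic} decomposition because $r^\dag$ was just shown to be almost surely ergodic.

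The third item is where the real content lies. I would begin with an arbitrary ergodic decomposition $g\aseq k\circ h$ through some $(A,b)$, where $k\colon(A,b)\to(X,p)$ is almost surely ergodic and $h\colon(Y,q)\to(A,b)$. Since $k$ is in particular left-invariant, \Cref{xinvlimit} supplies a unique $d\coloneqq\tilde{k}\colon(A,b)\to(X_\inv,p_\inv)$ with $k\aseq r^\dag\circ d$; this is the first reduction equation. The crucial point is that $d$ is a.s.\ deterministic: applying $r$ to $k\aseq r^\dag d$ and using $r\,r^\dag\aseq\id$ gives $d\aseq r\circ k$, and $r\,k$ is $b$-a.s.\ deterministic exactly by the ergodicity of $k$. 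The second reduction equation then follows by the chain $d\circ h\aseq(r\,k)\,h\aseq r\,(k\,h)\aseq r\,g\aseq\tilde{g}$, using $g\aseq k\,h$ and the formula $\tilde{g}\aseq r\,g$. For uniqueness, any $d'$ satisfying $r^\dag\circ d'\aseq k$ must equal $d$, since applying $r$ forces $d'\aseq r\,k\aseq d$.

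In short, once \Cref{xinvlimit} and \Cref{rdaginv} are in hand, the theorem is essentially a repackaging of the limit universal property, and I do not expect a genuine obstacle. The one step that is more than bookkeeping is recognizing that the ergodicity hypothesis on $k$---namely that $r\,k$ is a.s.\ deterministic---is precisely what forces the mediating morphism $d\aseq r\,k$ to be deterministic, which is exactly the determinism demanded of a reduction. Measure-preservation of $d$ is automatic, as $d$ is by construction a morphism of $\cat{PS(C)}$ (equivalently $d\,b\aseq r\,k\,b\aseq r\,p\aseq p_\inv$), so I treat it as routine.
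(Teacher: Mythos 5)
Your proof is correct and follows essentially the same route as the paper's: item one via \Cref{rdaginv} together with $r r^\dag \aseq \id_{X_\inv}$, item two via the limit universal property of \Cref{xinvlimit}, and item three by factoring $k$ through $(X_\inv,p_\inv)$ and using ergodicity of $k$ to conclude that the mediating morphism (your $d$, the paper's $\tilde{k}$, a.s.\ equal to $rk$) is a.s.\ deterministic. The only differences are cosmetic: you substitute the explicit formulas $\tilde{g}\aseq r\circ g$ and $d\aseq r\circ k$ where the paper invokes the uniqueness clause of the universal property, and you additionally spell out the uniqueness of the reducing morphism $d$, which the paper's proof leaves implicit.
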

	
	Before the proof, let us interpret this for $\cat{C}=\cat{BorelStoch}$, taking a standard Borel representative for $X_\inv$. 
	We get a (generalized, but almost sure) version of the traditional ergodic decomposition theorem:
	
	\begin{corollary}
		Let $D$ be a dynamical system acting via Markov kernels on a standard Borel space $X$, preserving the measure $p$. 
		Then 
		\begin{itemize}
			\item The kernel $r^\dag:X_\inv\to X$, defining for each $x\in X$ (technically, $X_\inv$) the measure $r^\dag_x$, satisfies the following properties:
			\begin{itemize}
				\item For almost all $x$, and for all invariant sets $B$, $r^\dag_x(B)$ is either $0$ or $1$;
				\item For all $m\in M$, for almost all $x\in X$, the measure $r^\dag_x$ on $X$ is $m$-invariant.
			\end{itemize}
			\item Given a left-invariant kernel $g:(Y,q)\to(X,p)$ (which we can view as a parametrized family of invariant measures, in the sense that for every $m\in M$, for $q$-almost all $y\in Y$, the measure $g_y$ on $X$ is $m$-invariant),
			for $q$-almost all $y\in Y$ the measure $g_y$ can be expressed as a following mixture of the $r^\dag_x$: 
			$$
			g_y \aseq_q \int_{X_\inv} r^\dag_x \,\tilde{g}(dx|y)
			$$
			for an a.s.~unique kernel $\tilde{g}:(Y,q)\to(X_\inv,p)$, i.e.~in a unique way as a convex decomposition.
			\item For every (other) a.s.\ ergodic decomposition of $g$ given by $h:(Y, q) \to (A, b)$ and $k:(A, b) \to (X, p)$, with $k$ a.s.~ergodic, there is a measurable function $f:A\to X_\inv$ such that for $b$-almost all $a\in A$, $k_a=r^\dag_{f(a)}$.
		\end{itemize}
	\end{corollary}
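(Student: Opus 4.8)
The plan is to deduce this corollary from \Cref{erg_dec} by a straightforward dictionary translation, specializing to $\cat{C}=\cat{BorelStoch}$ and fixing a standard Borel representative of $X_\inv$ (legitimate by \Cref{xinvborel}). Throughout I would use two translation rules: composition of kernels is pointwise mixing, $(k\circ g)(B|y)=\int k(B|x)\,g(dx|y)$; and an almost surely deterministic morphism into a standard Borel space is almost surely a Dirac kernel, i.e.~is induced (a.s.) by a measurable function. With these in hand, each of the three bullets is the concrete reading of the corresponding clause of \Cref{erg_dec}.

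For the first bullet I would unpack the assertion of \Cref{erg_dec} that $r^\dag$ is almost surely ergodic, which by definition means that $r\circ r^\dag$ is $p_\inv$-a.s.\ deterministic and that $r^\dag$ is $p_\inv$-a.s.\ left-invariant. For the $0/1$ property I would use that by \Cref{dag id} (applied to the a.s.\ deterministic cocone $r$) one has $r\circ r^\dag\aseq\id_{X_\inv}$; evaluating on an invariant set $B$ and using $r(B|x')=1_B(x')$ turns this into $r^\dag_x(B)=(r\circ r^\dag)(B|x)=1_B(x)\in\{0,1\}$ for $p_\inv$-almost all $x$. For the $m$-invariance I would invoke \Cref{rdaginv}, which gives $m\circ r^\dag\aseq_{p_\inv}r^\dag$ for each $m$; writing the composition out, $\int_X m(B|x')\,r^\dag_x(dx')=r^\dag_x(B)$ for almost all $x$, which is exactly the $m$-invariance of the measure $r^\dag_x$.

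The second and third bullets I would obtain by reading off the remaining clauses of \Cref{erg_dec}. The factorization $g\aseq r^\dag\circ\tilde g$ with $\tilde g$ a.s.\ unique becomes, upon expanding the composition, the identity $g_y=\int_{X_\inv}r^\dag_x\,\tilde g(dx|y)$ for $q$-almost all $y$, with uniqueness inherited verbatim. For the third bullet, given any other ergodic decomposition $(h,k)$ through $(A,b)$, \Cref{erg_dec} supplies an a.s.\ deterministic reduction $d:(A,b)\to(X_\inv,p_\inv)$ with $r^\dag\circ d\aseq k$; translating $d$ into an a.s.\ measurable function $f:A\to X_\inv$ (so that $d(-|a)=\delta_{f(a)}$ for $b$-almost all $a$) and substituting into $r^\dag\circ d\aseq k$ yields $k_a=r^\dag_{f(a)}$ for $b$-almost all $a$.

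The one genuinely delicate step I anticipate is the null-set bookkeeping in the first bullet: the clause ``for all invariant sets $B$'' demands a single exceptional set valid simultaneously for every $B$, rather than one per $B$. This is precisely where I would invoke that $X_\inv$ can be taken standard Borel (\Cref{xinvborel}), so that the a.s.\ equality $r\circ r^\dag\aseq\id_{X_\inv}$ holds on one full-measure set independent of $B$. Everywhere else the exceptional sets are either attached to a single fixed $m$ (as the pointwise quantifier ``for all $m$, for almost all $x$'' permits) or absorbed into the ambient almost-sure equalities, so no countability hypothesis on $M$ is required.
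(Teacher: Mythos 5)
Your overall route---reading the corollary off from \Cref{erg_dec} by specializing to $\cat{BorelStoch}$, expanding kernel composition as a mixture, and converting a.s.\ deterministic morphisms into measurable functions---is precisely what the paper does: the corollary is stated there without a separate proof, as the measure-theoretic interpretation of \Cref{erg_dec}, with all the real content living in the proof of that theorem (via $r r^\dag \aseq \id_{X_\inv}$, \Cref{rdaginv}, and \Cref{xinvlimit}). Your unpacking of the second and third bullets, and of the $m$-invariance clause (with the exceptional set allowed to depend on $m$, as the statement's quantifier order permits), is correct and matches the paper.

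The gap is in your proposed fix for the first bullet. Passing to a standard Borel representative of $X_\inv$ (via \Cref{xinvborel}) does give a single full-measure set on which $r\circ r^\dag$ agrees with the identity \emph{for all measurable subsets of the representative}; but those sets correspond only to a countably generated sub-$\sigma$-algebra which generates the a.s.\ invariant $\sigma$-algebra $\Sigma_\inv$ modulo null sets. It does not yield the uniform claim for all $B\in\Sigma_\inv$, and in fact that claim is false: every $p$-null set is automatically a.s.\ invariant, so $\Sigma_\inv$ is not countably generated, and no single exceptional set can work. Concretely, take $X=[0,1]^2$ with Lebesgue measure $p$ and $T(x,y)=(x+\alpha \bmod 1,\, y)$ with $\alpha$ irrational; then any version of $r^\dag$ satisfies $r^\dag_{(x,y)}=\lambda\otimes\delta_y$ for $p$-a.e.\ $(x,y)$ (with $\lambda$ Lebesgue measure on $[0,1]$), and for any candidate exceptional null set $N$ and any $(x_0,y_0)\notin N$, the set $B=[0,\tfrac12]\times\{y_0\}$ is $p$-null, hence lies in $\Sigma_\inv$, yet $r^\dag_{(x_0,y_0)}(B)=\tfrac12$. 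So the first bullet can only be read either with the exceptional set depending on $B$---which your own computation $r^\dag_x(B)=(r\circ r^\dag)(B|x)=1_B(x)$ already delivers, with no appeal to \Cref{xinvborel}---or with ``invariant sets'' interpreted through the standard Borel representative, which is presumably the paper's intent given its preface ``taking a standard Borel representative for $X_\inv$''. Either way, the uniformity you claim is not obtainable for the literal $\Sigma_\inv$, and that step of your argument should be replaced by an explicit choice of one of these two readings.
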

	
	The proof of the theorem follows almost as a corollary from the results of the previous section. 
	
	\begin{proof}[Proof of \Cref{erg_dec}]
		First of all, let us prove that $r^\dag$ is $p_\inv$-almost surely ergodic. 
		\begin{itemize}
			\item $rr^\dag$ is almost surely deterministic, since it is almost surely equal to the identity;
			\item $r^\dag$ is almost surely invariant by \Cref{rdaginv}.
		\end{itemize}
		
		Now let $g:(A,q)\to (X,p)$ be a left-invariant morphism.
		By \Cref{xinvlimit}, and by the universal property of limits there exists a unique $\tilde{g}:(Y,q)\to(X_\inv,p)$ such that $g\aseq r^\dag\circ\tilde{g}$. 
		
		Now suppose we have another a.s.\ decomposition of $g$ given by $h:(Y, q) \to (A, b)$, $k:(A, b) \to (X, p)$ of $g$, with $k$ a.s.~ergodic. Then, because $k$ is left-invariant, it factors through $(X_\inv, p_\inv)$ and some $\tilde{k}$ such that $k \aseq r^\dag \tilde{k}$, as in the following diagram.
		\[\begin{tikzcd}[sep=tiny]
			&&& {(X_\inv, p_\inv)} \\
			{(Y, q)} &&&&& {(X, p)} \\
			&& {(A, b)}
			\arrow["{\tilde{g}}", from=2-1, to=1-4]
			\arrow["{r^\dag}", from=1-4, to=2-6]
			\arrow["h"', from=2-1, to=3-3]
			\arrow["k"', from=3-3, to=2-6]
			\arrow["{\tilde{k}}"', from=3-3, to=1-4, near end]
		\end{tikzcd}\]
		Since $k$ is ergodic, we have that $rk$ is $q$-a.s.~deterministic. But $rk \aseq rr^\dag \tilde{k} \aseq \tilde{k}$, and so $\tilde{k}$ is almost surely deterministic. Also, since $g \aseq k \circ h \aseq r^\dag \circ (\tilde{k} \circ h)$ and by uniqueness of $\tilde{g}$, we also have that $\tilde{g} \aseq \tilde{k} \circ h$. Therefore the diagram above commutes, and since $\tilde{k}$ is $b$-a.s.~deterministic, the decomposition $(\tilde{g}, r^\dag)$ is a reduction of $(h, k)$.
	\end{proof}
	
	In some sense, $(X_\inv,p_\inv)$ plays (up to isomorphism of $\cat{PS(C)}$) the role of \emph{indexing the ergodic measures}.
	It can be thought of, especially when $M$ is countable, as the space of ergodic measures themselves.
	One has to keep in mind, however, that everything is defined only up to measure zero, and so $(X_\inv,p_\inv)$ does not keep track of each single ergodic measure in general. One can think of $(X_\inv,p_\inv)$ as of keeping track of the ``bulk'' of those ergodic measures that, together, form the invariant measure $p$.

	\subsection{Transitions to equilibrium}\label{equilibrium}
	
	We will now look in more detail at the map $e_D$, and show its significance in terms of equilibrium. 
	(As we remarked, the first mention of using idempotents to describe notions of stochastic equilibrium categorically can be found in \cite{fritz2021topos}.)
	Very roughly, we can interpret $e_D$ as ``averaging things over orbits'', in a way that we will make more precise in a moment. 
	
	We will work in a Markov category with conditionals $\cat{C}$ such that $\cat{PS(C)}$ has all invariant objects (and so, in particular, all idempotents of $\cat{PS(C)}$ split).
	Thanks to \Cref{xinvborel}, an example of such a $\cat{C}$ is the category $\cat{BorelStoch}$.
	
	Throughout this section, let $D:M\to\cat{PS(C)}$ be a dynamical system acting on the object $(X,p)$.
	We will exhibit how the idempotent $e_D:(X,p)\to(X,p)$ associated to $D$ has the interpretation of (almost surely) representing the ``long-time behavior'' of the system, its transition to equilibrium. 
	
	\begin{example}\label{discchain}
		Consider a discrete Markov chain given by the following stochastic matrix on $X=\{a,b,c,d,e\}$.
		\begin{equation*}
			m=\left( \begin{matrix}
				1/2 & 0 & 0 & 0 & 0 \\
				1/2 & 1 & 0 & 0 & 0 \\
				0 & 0 & 1 & 0 & 0 \\
				0 & 0 & 0 & 1/2 & 1/3 \\
				0 & 0 & 0 & 1/2 & 2/3
			\end{matrix} \right)
			\qquad\qquad
			\begin{tikzpicture}[baseline={(current bounding box.center)},
				transition/.style={midway, inner sep=1mm, font=\scriptsize, color=probcolor}]
				
				\foreach \i/\j in {1/a,2/b,3/c,4/d,5/e}
				\node[circle, draw=black, inner sep=0.5mm] (\i) at (252-72*\i:1.5) {$\j\strut$};
				
				\draw[ar,probcolor] (1) to node[transition, above left] {1/2} (2);
				\draw[ar,probcolor] (1) to [out=210,in=150, looseness=5] node[transition, left] {1/2} (1);
				\draw[ar,probcolor] (2) to [out=138,in=78, looseness=5] node[transition, above left] {1} (2);
				\draw[ar,probcolor] (3) to [out=66,in=6, looseness=5] node[transition, above right] {1} (3);
				\draw[ar,probcolor] (4) to [out=-6,in=-66, looseness=5] node[transition, below right] {1/2} (4);
				\draw[ar,probcolor] (4) to [out=-180,in=36] node[transition, above left, pos=0.4] {1/2} (5);
				\draw[ar,probcolor] (5) to [out=0,in=-144] node[transition, below right, near start] {1/3} (4);
				\draw[ar,probcolor] (5) to [out=-78,in=-138, looseness=5] node[transition, below left] {2/3} (5);
				
				\node[inner xsep=1cm, inner ysep=0.8cm, draw=black,fit=(1) (2) (3) (4) (5)] {} ;
			\end{tikzpicture}
		\end{equation*}
		Consider also the following invariant measure $p$:
		$$
		p(a)=0 , \quad p(b)=0 , \quad p(c) = \dfrac{1}{3} , \quad p(d) = \dfrac{2}{3}\cdot \dfrac{2}{5}= \dfrac{4}{15} , \quad p(e) = \dfrac{2}{3}\cdot \dfrac{3}{5}=  \dfrac{6}{15} .
		$$
		The transition matrix $m$ has three recurrent classes, $\{b\}$, $\{c\}$, and $\{d,e\}$, corresponding to three ergodic measures supported on them. 
		The $p$-a.s.~invariant $\sigma$-algebra, up to isomorphism of $\cat{PS(FinStoch)}$, is the one generated by the partition $\{\{b\},\{c\},\{d,e\}\}$ (since $a$ is transient), and so we can take as $X_\inv$, equivalently, a three-element set $\{x,y,z\}$ with the discrete $\sigma$-algebra, and measure
		\begin{itemize}
			\item $p(x)=0$ (corresponding to the class $\{b\}$);
			\item $p(y)=1/3$ (corresponding to the class $\{c\}$);
			\item $p(z)=2/3$ (corresponding to the class $\{d,e\}$).
		\end{itemize}
		Note that, up to isomorphism of $\cat{PS(FinStoch)}$, we could have even left $x$ out entirely, since it has measure zero.  
		(In this case it is easy to keep track of the ``invisible'' recurrent class $\{b\}$ even if it has measure zero, but this does not generalize outside the discrete case, since in general there may not be an invariant measure which gives nonzero measure to \emph{all} ergodic ones.)
		
		The map $r:X\to X_\inv$, since it is almost surely deterministic, can be thought of as a function. It maps each element of positive measure to the element of $X_\inv$ corresponding to its recursive class. For transient elements, and for other elements of measure zero, $r$ can be arbitrary:
		\begin{center}
			\begin{tikzpicture}[baseline={(current bounding box.center)},scale=0.8,
				transition/.style={midway, inner sep=1mm, font=\scriptsize, color=probcolor}]
				
				\foreach \i/\j in {1/a,2/b,3/c,4/d,5/e}
				\node[circle, draw=black, inner sep=0.5mm] (\i) at (252-72*\i:1.5) {$\j\strut$};
				
				\draw[ar,probcolor!50] (1) to (2);
				\draw[ar,probcolor!50] (1) to [out=210,in=150, looseness=5] (1);
				\draw[ar,probcolor!50] (2) to [out=138,in=78, looseness=5] (2);
				\draw[ar,probcolor!50] (3) to [out=66,in=6, looseness=5] (3);
				\draw[ar,probcolor!50] (4) to [out=-6,in=-66, looseness=5] (4);
				\draw[ar,probcolor!50] (4) to [out=-180,in=36] (5);
				\draw[ar,probcolor!50] (5) to [out=0,in=-144] (4);
				\draw[ar,probcolor!50] (5) to [out=-78,in=-138, looseness=5] (5);
				
				\node[inner xsep=0.8cm, inner ysep=0.8cm, draw=black,fit=(1) (2) (3) (4) (5), label=left:$X$] {} ;
				
				\node[bullet, label=right:$x$] (x) at (6,1.5) {} ;
				\node[bullet, label=right:$y$] (y) at (6,0) {} ;
				\node[bullet, label=right:$z$] (z) at (6,-1.5) {} ;
				\node[inner xsep=0.8cm, inner ysep=0.8cm, draw=black,fit=(x) (y) (z), label=right:$X_\inv$] {} ;
				
				\draw[ar,dotted] (1) to[bend left=25] (x) ;
				\draw[ar,dotted] (2) to[bend left=20] (x) ;
				\draw[ar,dotted] (3) to[bend right=10] (y) ;
				\draw[ar,dotted] (4) to[bend right=10] (z) ;
				\draw[ar,dotted] (5) to[bend right=15] (z) ;
			\end{tikzpicture}
		\end{center}
		The Bayesian inverse $r^\dag:X_\inv\to X$, instead, is in general not deterministic. It assigns each element of $X_\inv$ (i.e., up to isomorphism, each recursive class) to the corresponding elements in the class, distributed according to the ergodic measure associated to the class. (Again, technically it has to do so only for those classes that are assigned nonzero measure by $p$, and outside of that, it can be arbitrary.) Here is a representation:
		\begin{center}
			\begin{tikzpicture}[baseline={(current bounding box.center)},scale=0.8,
				transition/.style={midway, inner sep=1mm, font=\scriptsize, color=probcolor}]
				
				\foreach \i/\j in {1/a,2/b,3/c,4/d,5/e}
				\node[circle, draw=black, inner sep=0.5mm] (\i) at (252-72*\i:1.5) {$\j\strut$};
				
				\draw[ar,probcolor!50] (1) to (2);
				\draw[ar,probcolor!50] (1) to [out=210,in=150, looseness=5] (1);
				\draw[ar,probcolor!50] (2) to [out=138,in=78, looseness=5] (2);
				\draw[ar,probcolor!50] (3) to [out=66,in=6, looseness=5] (3);
				\draw[ar,probcolor!50] (4) to [out=-6,in=-66, looseness=5] (4);
				\draw[ar,probcolor!50] (4) to [out=-180,in=36] (5);
				\draw[ar,probcolor!50] (5) to [out=0,in=-144] (4);
				\draw[ar,probcolor!50] (5) to [out=-78,in=-138, looseness=5] (5);
				
				\node[inner xsep=0.8cm, inner ysep=0.8cm, draw=black,fit=(1) (2) (3) (4) (5), label=right:$X$] {} ;
				
				\node[bullet, label=left:$x$] (x) at (-7,1.5) {} ;
				\node[bullet, label=left:$y$] (y) at (-7,0) {} ;
				\node[bullet, label=left:$z$] (z) at (-7,-1.5) {} ;
				\node[inner xsep=0.8cm, inner ysep=0.8cm, draw=black,fit=(x) (y) (z), label=left:$X_\inv$] {} ;
				
				\draw[ar,probcolor] (x) to[bend left=10] node[transition, above] {1} (2) ;
				\draw[ar,probcolor] (y) to[bend left=5] node[transition, above,pos=0.4] {1} (3) ;
				\draw[ar,probcolor] (z) to[bend left=10] node[transition, above, pos=0.4] {2/5} (4) ;
				\draw[ar,probcolor] (z) to[bend right=10] node[transition, above] {3/5} (5) ;
			\end{tikzpicture}
		\end{center}
		
		The idempotent $e_D=r^\dag r:X\to X$ is given (again, up to a set of measure zero) by the following idempotent Markov chain:
		\begin{equation*}
			e_D=\left( \begin{matrix}
				0 & 0 & 0 & 0 & 0 \\
				1 & 1 & 0 & 0 & 0 \\
				0 & 0 & 1 & 0 & 0 \\
				0 & 0 & 0 & 2/5 & 3/5 \\
				0 & 0 & 0 & 2/5 & 3/5
			\end{matrix} \right)
			\qquad\qquad
			\begin{tikzpicture}[baseline={(current bounding box.center)},
				transition/.style={midway, inner sep=1mm, font=\scriptsize, color=probcolor}]
				
				\foreach \i/\j in {1/a,2/b,3/c,4/d,5/e}
				\node[circle, draw=black, inner sep=0.5mm] (\i) at (252-72*\i:1.5) {$\j\strut$};
				
				\draw[ar,probcolor] (1) to node[transition, above left] {1} (2);
				\draw[ar,probcolor] (2) to [out=138,in=78, looseness=5] node[transition, above left] {1} (2);
				\draw[ar,probcolor] (3) to [out=66,in=6, looseness=5] node[transition, above right] {1} (3);
				\draw[ar,probcolor] (4) to [out=-6,in=-66, looseness=5] node[transition, below right] {2/5} (4);
				\draw[ar,probcolor] (4) to [out=-180,in=36] node[transition, above left, pos=0.4] {3/5} (5);
				\draw[ar,probcolor] (5) to [out=0,in=-144] node[transition, below right, near start] {2/5} (4);
				\draw[ar,probcolor] (5) to [out=-78,in=-138, looseness=5] node[transition, below left] {3/5} (5);
				
				\node[inner xsep=1cm, inner ysep=0.8cm, draw=black,fit=(1) (2) (3) (4) (5)] {} ;
			\end{tikzpicture}
		\end{equation*}
		This idempotent matrix can be seen as the ``long-time dynamics'', and satisfies the property that $e_Dm=me_D=e_D$, which we can prove in general in the next proposition.
	\end{example}
	
	\begin{proposition}\label{edinv}
		For every $m\in M$, denote as usual the induced morphism $(X,p)\to (X,p)$ again by $m$.
		Then we have that 
		\begin{equation*}
			e_D m \aseq_p m e_D \aseq_p e_D .
		\end{equation*}
		In other words, $e_D$ is both left- and right-invariant.
	\end{proposition}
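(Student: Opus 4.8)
The plan is to reduce both almost sure equalities to the invariance properties of the colimiting cocone $r$ and of its Bayesian inverse $r^\dag$ that have already been established, and then simply to compose these identities inside $\cat{PS(C)}$, where almost sure equalities compose by construction of the category.

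First I would recall from \Cref{defeD} that $e_D = r^\dag \circ r$, with $r:(X,p)\to(X_\inv,p_\inv)$ the colimiting cocone and $r^\dag$ its Bayesian inverse relative to $p$. Two facts are then available at no cost. On the one hand, $r$ is a cocone for $D$, which is precisely the statement that $r$ is right-invariant, i.e.\ $r\circ m \aseq_p r$ for every $m\in M$. On the other hand, \Cref{rdaginv} tells us that $r^\dag$ is left-invariant, i.e.\ $m\circ r^\dag \aseq_{p_\inv} r^\dag$ for every $m$. With these in hand, the right-invariance of $e_D$ follows by post-composing the first identity with $r^\dag$: associativity gives $e_D\, m = r^\dag\,(r\,m) \aseq_p r^\dag\, r = e_D$. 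Dually, the left-invariance follows by pre-composing the second identity with $r$: since $m\, r^\dag \aseq_{p_\inv} r^\dag$ and $r$ pushes $p$ forward to $p_\inv$, we obtain $m\, e_D = (m\, r^\dag)\, r \aseq_p r^\dag\, r = e_D$. Chaining the two yields $e_D m \aseq_p m e_D \aseq_p e_D$, as required.

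The only delicate point — and the closest thing to an obstacle — is the bookkeeping of measures under composition: the first identity lives over $p$ while the second lives over $p_\inv$, so one must know that post-composition by $r^\dag$ preserves $p$-almost sure equality, and that pre-composition by the measure-preserving map $r$ transports $p_\inv$-almost sure equality back to $p$-almost sure equality. Both are automatic once one works inside $\cat{PS(C)}$, where the relevant identities are genuine equalities of morphisms and composition is well-defined; at the level of kernels these are exactly the compatibility statements guaranteed by $r\, p = p_\inv$. So the proposition is essentially a direct corollary of \Cref{rdaginv} together with the cocone property of $r$.
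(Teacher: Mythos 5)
Your proposal is correct and follows essentially the same route as the paper's proof: both decompose $e_D = r^\dag r$, use the cocone property $rm \aseq_p r$ for right-invariance, and invoke \Cref{rdaginv} ($m r^\dag \aseq_{p_\inv} r^\dag$) for left-invariance, with the measure bookkeeping absorbed by well-definedness of composition in $\cat{PS(C)}$.
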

	\begin{proof}
		Recalling that $e_D = r^\dag r$, and that $r$ is right-invariant, we have
		\begin{equation*}
			e_D m = r^\dag r m \aseq r^\dag r = e_D .
		\end{equation*}
		Similarly, using \Cref{rdaginv},
		\begin{equation*}
			m e_D = m r^\dag r \aseq r^\dag r = e_D .
		\end{equation*}
	\end{proof}

	\begin{proposition}\label{invfrome}
		Let $f:(X,p)\to(Y,q)$ be any morphism. The morphism $f$ is right-invariant (for every $m$) if and only if it is right-invariant for $e_D$, i.e.~if $f\circ e_D\aseq_p f$.
		
		Similarly, let $g:(A,q)\to (X,p)$ be any morphism. The morphism $g$ is left-invariant (for every $m$) if and only if it is left-invariant for $e_D$, i.e.~if $e_D\circ g\aseq_q g$.
	\end{proposition}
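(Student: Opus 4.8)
The plan is to reduce both biconditionals to two identities already available for $e_D$: that $e_D$ is simultaneously left- and right-invariant, $e_D m \aseq_p m e_D \aseq_p e_D$ (\Cref{edinv}), and that $r r^\dag \aseq_{p_\inv} \id_{X_\inv}$, which holds by \Cref{dag id} since $r$ is $p$-a.s.\ deterministic. Throughout I will use freely that almost-sure equality is a congruence: it is preserved under post-composition, and under pre-composition with a measure-preserving morphism (so that, for instance, from $\alpha \aseq_{p_\inv} \beta$ and a measure-preserving $\gamma$ with $\gamma q = p_\inv$ one obtains $\alpha\gamma \aseq_q \beta\gamma$). Keeping track of which state each almost-sure equality is taken with respect to is the only point that requires genuine care.

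For the first biconditional, the backward direction is immediate from \Cref{edinv}: assuming $f e_D \aseq_p f$, for any $m$ we post-compose $e_D m \aseq_p e_D$ with $f$ to get $f e_D m \aseq_p f e_D \aseq_p f$, while pre-composing the hypothesis $f e_D \aseq_p f$ with $m$ gives $f e_D m \aseq_p f m$; combining these yields $f m \aseq_p f$. For the forward direction, suppose $f$ is right-invariant for every $m$. Then $f$ is a cocone for $D$, so by the colimit universal property of $(X_\inv, p_\inv)$ (\Cref{xinvexists}) it factors as $f \aseq_p \tilde f r$. Using $r r^\dag \aseq_{p_\inv} \id$ one first computes $f r^\dag \aseq_{p_\inv} \tilde f r r^\dag \aseq_{p_\inv} \tilde f$, and then, pre-composing with $r$, concludes $f e_D = f r^\dag r \aseq_p \tilde f r \aseq_p f$.

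The second biconditional is dual. Its backward direction again follows directly from \Cref{edinv}: assuming $e_D g \aseq_q g$, for any $m$ we pre-compose $m e_D \aseq_p e_D$ with the measure-preserving $g$ to obtain $m e_D g \aseq_q e_D g \aseq_q g$, while post-composing the hypothesis with $m$ gives $m e_D g \aseq_q m g$, so $m g \aseq_q g$. For the forward direction, suppose $g$ is left-invariant. By \Cref{xinvlimit}, $(X_\inv, p_\inv)$ is the limit of $D$ with limiting cone $r^\dag$, so $g$ factors as $g \aseq_q r^\dag \tilde g$ for a unique measure-preserving $\tilde g \colon (A,q) \to (X_\inv, p_\inv)$. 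Then, using $r r^\dag \aseq \id$ and pre-composition by $\tilde g$, one gets $e_D g = r^\dag r r^\dag \tilde g \aseq_q r^\dag \tilde g \aseq_q g$. Since the factorizations through $r$ and $r^\dag$ are supplied by the already-proven universal properties and \Cref{edinv} does all the work in the reverse directions, there is no serious obstacle; the only thing to watch is the bookkeeping of the reference states in each almost-sure equality.
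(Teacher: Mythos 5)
Your proof is correct and follows essentially the same route as the paper's: the backward directions come from \Cref{edinv} exactly as in the paper, and the forward directions use the factorization $f \aseq \tilde f r$ (resp.\ $g \aseq r^\dag \tilde g$) from the colimit property of \Cref{xinvexists} (resp.\ the limit property of \Cref{xinvlimit}) together with $r r^\dag \aseq \id_{X_\inv}$, which is precisely the paper's computation $f e_D \aseq \tilde f r r^\dag r \aseq \tilde f r \aseq f$ and the intended meaning of its ``analogous'' left-invariant case. Your extra bookkeeping of reference states and of pre-/post-composition stability of almost-sure equality is a more explicit rendering of the same argument, not a different one.
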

	\begin{proof}
		Suppose $f$ is right-invariant for every $m$. Then by the universal property of $(X_\inv,p_\inv)$ as a colimit of $D$,
		$$
		fe_D \aseq \tilde{f} r e_D = \tilde{f} r r^\dag r \aseq \tilde{f} r = f .
		$$
		Conversely, suppose that $f\circ e_D\aseq f$. Then by \Cref{edinv}, for every $m\in M$, 
		$$
		f m \aseq f e_D m \aseq f e_D \aseq f .
		$$
		The proof for left-invariance is analogous.
	\end{proof}
	
	\begin{corollary}
		Let $f:(X,p)\to(Y,q)$ be any morphism. The morphism $f\circ e_D:(X,p)\to(Y,q)$ is right-invariant. Moreover, every right-invariant morphism arises in this way.
		
		Similarly, given any $g:(A,q)\to (X,p)$, $e_D\circ g:(A,q)\to (X,p)$ is left-invariant. Moreover, every left-invariant morphism arises in this way.
	\end{corollary}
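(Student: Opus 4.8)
The plan is to deduce both halves of the corollary directly from \Cref{invfrome}, using only the idempotency of $e_D$ (recall $e_D = r^\dag r$, and since $r r^\dag \aseq \id_{X_\inv}$ we have $e_D \circ e_D \aseq_p e_D$). The crucial input is that \Cref{invfrome} already supplies a fixed-point criterion: a morphism $f$ out of $(X,p)$ is right-invariant if and only if $f \circ e_D \aseq_p f$, and dually a morphism $g$ into $(X,p)$ is left-invariant if and only if $e_D \circ g \aseq_q g$. So the entire proof reduces to checking these two identities, and the statement becomes the assertion that post-composition (resp.~pre-composition) with $e_D$ is a projection onto the right-invariant (resp.~left-invariant) morphisms.

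First I would treat the right-invariant case. To show that $f \circ e_D$ is right-invariant, by \Cref{invfrome} it suffices to verify $(f \circ e_D) \circ e_D \aseq_p f \circ e_D$; this is immediate from associativity of composition in $\cat{PS(C)}$ together with $e_D \circ e_D \aseq_p e_D$. For the ``moreover'', if $f$ is already right-invariant, then \Cref{invfrome} gives $f \circ e_D \aseq_p f$, which exhibits $f$ (up to a.s.\ equality) in the form $(-)\circ e_D$, the witness being $f$ itself. Hence the right-invariant morphisms are exactly those of the form $f \circ e_D$.

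The left-invariant case is entirely symmetric. To show $e_D \circ g$ is left-invariant, by \Cref{invfrome} it suffices to check $e_D \circ (e_D \circ g) \aseq_q e_D \circ g$, which again follows from $e_D \circ e_D \aseq_p e_D$; and if $g$ is already left-invariant, then $e_D \circ g \aseq_q g$ by \Cref{invfrome}, so $g$ arises as $e_D \circ (-)$ with witness $g$.

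I expect no genuine obstacle here: all the substantive content was packaged into \Cref{edinv} and \Cref{invfrome}, and this corollary merely repackages it as an image characterization. The only point requiring a little care is that every equality is almost-sure rather than strict, but since composition in $\cat{PS(C)}$ is well-defined on a.s.\ equivalence classes, this causes no difficulty in the two-line verifications above.
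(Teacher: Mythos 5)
Your proof is correct and matches the paper's intent: the corollary is stated without proof precisely because it is the immediate consequence of \Cref{invfrome} (plus the idempotency $e_D \circ e_D \aseq_p e_D$, already established when $e_D$ was shown to be split by $(X_\inv, r^\dag, r)$) that you spell out. The only cosmetic remark is that the first halves could be obtained even more directly from \Cref{edinv} ($e_D \circ m \aseq m \circ e_D \aseq e_D$ gives $(f \circ e_D)\circ m \aseq f \circ e_D$ for each $m$ without invoking the fixed-point criterion), but your route through \Cref{invfrome} is equally valid and equally short.
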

	
	A possible interpretation of \Cref{invfrome} is the following:
	\begin{itemize}
		\item Given any morphism $f:X\to Y$, deterministic or not, precomposing with $e_D$ ``mixes'' or ``averages'' $f$ stochastically to make it (probabilistically) invariant. (Componentwise, this means harmonic.)
		\item Similarly, given any morphism $g:A\to X$, which we can see as giving families of elements of $X$ or measures on $X$ ``making up'' $p$ (in the sense of convex decompositions), postcomposing with $e_D$ ``mixes'' or ``averages'' the measures until they are invariant.
	\end{itemize}
	
	In other words, composing with $e_D$ on either side has a meaning of ``mixing until equilibrium''. This has to do with \emph{conditional expectations}, which can also be described categorically, and will be addressed in full generality in future work. 
	This is also related to the \emph{ergodic theorem}, which is about \emph{tending} to equilibrium, and which we also relegate to future work.
	
	Again following the idea of ``long-time behavior'', we also have the following equivalent characterization of ergodicity.  It intuitively says that a measure is ergodic if and only if each event, after a transition to equilibrium, is independent from any other event. 
	
	\begin{proposition}
		An invariant state $p$ for the system $D$ is ergodic (according to \Cref{deferg}) if and only if the following equation holds.
		\begin{equation}\label{alt-erg}
			\tikzfig{alt-erg}
		\end{equation}
	\end{proposition}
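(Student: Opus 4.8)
The plan is to work inside $\cat{C}$ (where the copy map of diagram \eqref{alt-erg} lives), treating $p$, $r$, $r^\dag$ and $e_D$ as representatives up to $p$-a.s.\ equality, and to reduce both implications to the single criterion of \Cref{deferg}: $p$ is ergodic if and only if $p_\inv = r\,p$ is a deterministic state. Everything will be driven by the description $e_D = r^\dag\circ r$ (\Cref{defeD}) together with the two structural facts about the colimiting cocone, namely that $r$ is $p$-a.s.\ deterministic and that $r\circ r^\dag \aseq \id_{X_\inv}$ (by \Cref{dag id}). I read the diagram \eqref{alt-erg} as the independence statement that copying $p$ and pushing one branch through $e_D$ yields the product state, i.e.\ $(\id_X \otimes e_D)\circ \cop_X \circ p \aseq p\otimes p$ on the right-hand side; the strategy below is insensitive to the precise placement of the $e_D$ boxes.

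For the forward direction, I would assume $p$ ergodic, so $p_\inv$ is deterministic, and first show that this forces $e_D$ to collapse to the constant channel $p\circ\del_X$ (delete the input, re-prepare $p$). Indeed, a $p$-a.s.\ deterministic morphism $r$ whose pushforward $r\,p = p_\inv$ is a Dirac state must itself be $p$-a.s.\ constant, so $r \aseq_p p_\inv\circ\del_X$; composing on the left with $r^\dag$ and using $r^\dag\circ p_\inv = r^\dag\circ r\circ p = e_D\circ p = p$ gives $e_D \aseq_p p\circ\del_X$. Once $e_D$ is a constant channel its output is independent of its input, so substituting it into \eqref{alt-erg} makes the two branches independent, each with marginal $p$, which is exactly the product $p\otimes p$.

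For the converse I would start from \eqref{alt-erg} and apply $r$ to the output wire(s). Using $r\circ e_D = r\circ r^\dag\circ r \aseq r$ together with the determinism of $r$, which supplies $(r\otimes r)\circ\cop_X \aseq_p \cop_{X_\inv}\circ r$, the left-hand side collapses to $\cop_{X_\inv}\circ r\circ p = \cop_{X_\inv}\,p_\inv$, while the right-hand side $p\otimes p$ becomes $p_\inv\otimes p_\inv$. The resulting equality $\cop_{X_\inv}\,p_\inv = p_\inv\otimes p_\inv$ is precisely the assertion that the state $p_\inv$ is deterministic, so $p$ is ergodic by \Cref{deferg}.

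The main obstacle is the forward direction, specifically turning ``$p_\inv$ deterministic'' into ``$e_D$ is $p$-a.s.\ constant''. This is the one step that is not pure diagram rewriting: it relies on the fact that a joint state with a deterministic marginal equals the product of its marginals (equivalently, that a deterministic morphism with Dirac pushforward is a.s.\ constant), which holds in any Markov category with conditionals by disintegrating along the deterministic marginal $\psi_Y$ and invoking $\cop_Y\circ\psi_Y = \psi_Y\otimes\psi_Y$. The converse, by contrast, is a short computation powered entirely by $r\circ r^\dag\aseq\id_{X_\inv}$ and the determinism of $r$.
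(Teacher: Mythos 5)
Your proof is correct and takes essentially the same route as the paper's. The converse direction (applying $r$ to both wires and using determinism of $r$ together with $r\circ r^\dag \aseq \id_{X_\inv}$) is the paper's argument verbatim, and your forward direction---deriving $e_D \aseq_p p \circ \del_X$ from the fact that a joint state with a deterministic marginal is the product of its marginals---is exactly the paper's appeal to (relative) positivity of $\cat{C}$ applied to the deterministic composite $r \circ p$, merely repackaged through the intermediate observation that $e_D$ collapses to the constant channel.
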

	In $\cat{BorelStoch}$ this says that for every measurable $A$ and $B$, 
	$$
	\int_A e_D(B|x)\,p(dx) = p(A)\,p(B) .
	$$
	(Compare for example with \cite[Theorem~3, Item 6]{tao}.)
	\begin{proof}
		First of all, suppose that $p$ is ergodic, meaning that $r\circ p$ is deterministic. Then by positivity, 
		\ctikzfig{alt-erg1}
		
		Conversely, suppose \eqref{alt-erg} holds. Then by determinism of $r$,
		\ctikzfig{alt-erg2}
		which shows that $r\circ p$ is deterministic.
	\end{proof}
	
	Before we leave this section, let us also notice that the morphism $e_D$ is self-adjoint, in the sense that it is its own Bayesian inverse. 
	Indeed,
	$$
	e_D^\dag = (r^\dag r)^\dag = r^\dag (r^\dag)^\dag = r^\dag r = e_D .
	$$
	This can be interpreted as the fact that, as a Markov chain, it is \emph{reversible}, or that it satisfied \emph{detailed balance}. In the discrete case, this condition reads as follows,
	$$
	p(x)\,e_D(y|x) = p(y) \, e_D(y|x)
	$$
	and is an even stronger notion of equilibrium than left- and right-invariance. In the terminology of \cite[Proposition~4.1.10]{supports}, this is related to the idea of \emph{balanced} idempotents (see the reference for the details). 
	From the point of view of dagger categories, we are saying that $e_D$ (hence, all idempotents of $\cat{PS(Borel)}$) are \emph{dagger idempotents} (which follows from the fact that they are dagger-split), see \cite{daggeridempotents}.

	\section{Further examples}\label{examples}
	
	Here we give concrete instances of the results of the previous section in the case of $\cat{C}=\cat{BorelStoch}$, for different choices of spaces and dynamical systems, connecting to classical ideas of probability theory.
	
	\subsection{Finite permutations}\label{finiteperm}
	
	Let $X$ be a finite set. For a finite $n\in \N$, denote by $X^n$ the $n$-fold tensor product of $X$ with itself,
	$$
	X^n \coloneqq \underbrace{X\otimes \dots \otimes X}_{n \mbox{\scriptsize\ times}} .
	$$
	The symmetric group $S_n$ acts on $X^n$ by permuting the components (i.e.~by application of the braiding of the monoidal category). Concretely, given $\sigma\in S_n$, 
	$$
	(x_1,\dots,x_n) \mapsto (x_{\sigma^{-1}(1)},\dots,x_{\sigma^{-1}(n)}) .
	$$
	
	The invariant $\sigma$-algebra is exactly the one generated by the orbits of this action. 
	For example, if $X=\{0,1\}$, and $n=3$, the set $\{0,1\}^3$ can be partitioned as follows
	\begin{center}
		\begin{tikzpicture}[baseline,
			partition/.style={ellipse,fill=white, fill opacity=0.7, draw=black, minimum width=0.6cm, minimum height=1.2cm, rotate fit=45},
			x={(0:3cm)},y={(90:3cm)},z={(45:2.4cm)}]
			\node[bullet] (000) at (0,0,0) {};
			\node[bullet] (001) at (0,0,1) {};
			\node[bullet] (010) at (0,1,0) {}; 
			\node[bullet] (011) at (0,1,1) {};
			\node[bullet] (100) at (1,0,0) {};
			\node[bullet] (101) at (1,0,1) {}; 
			\node[bullet] (110) at (1,1,0) {}; 
			\node[bullet] (111) at (1,1,1) {}; 
			
			\node[partition,fit=(000),label=below:$A_0$] {} ;
			\draw[side] (000) -- (001) ;
			\draw[side] (000) -- (010) ;
			\draw[side] (000) -- (100) ;
			
			\node[partition,fit=(010) (001) (100),label=below:$A_1$] {} ;
			\draw[side] (010) -- (011) ;
			\draw[side] (100) -- (101) ;
			\draw[side] (001) -- (011) ;
			\draw[side] (100) -- (110) ;
			\draw[side] (001) -- (101) ;
			\draw[side] (010) -- (110) ;
			
			\node[partition,fit=(011) (110) (101),label=below:$A_2$] {} ;
			\draw[side] (110) -- (111) ;
			\draw[side] (101) -- (111) ;
			\draw[side] (011) -- (111) ;
			
			\node[partition,fit=(111),label=below:$A_3$] {} ;
			
			\node[bullet,label=225:$(000)$] () at (0,0,0) {};
			\node[bullet,label=180:$(001)$] () at (0,0,1) {};
			\node[bullet,label=180:$(010)$] () at (0,1,0) {};
			\node[bullet,label=45:$(011)$] () at (0,1,1) {};
			\node[bullet,label=225:$(100)$] () at (1,0,0) {};
			\node[bullet,label=0:$(101)$] () at (1,0,1) {};
			\node[bullet,label=0:$(110)$] () at (1,1,0) {};
			\node[bullet,label=45:$(111)$] () at (1,1,1) {};
		\end{tikzpicture}
	\end{center}
	where 
	\begin{itemize}
		\item $A_0$ is the set of sequences containing all zeros;
		\item $A_1$ is the set of sequences containing a single $1$;
		\item $A_2$ is the set of sequences containing exactly two occurrences of $1$;
		\item $A_3$ is the set of sequences containing all ones.
	\end{itemize}
	
	A measure $p$ on $X^n$ is invariant if and only if each value $p(x_1,\dots,x_n)$ is constant within each orbit, i.e.~if it does not depend on the order of the $x_i$. Equivalently, a measure is invariant if and only if it only depends on the number of occurrences of the different $x_i$, regardless of when they occur.
	
	There are four ergodic measures, which here are supported on single orbits and constant within each orbit, and so they are in bijection with the orbits:
	$$
	\begin{tikzpicture}[baseline,
		partition/.style={ellipse,fill=white, fill opacity=0.7, draw=black, minimum width=0.4cm, minimum height=0.8cm, rotate fit=45},
		x={(0:1.25cm)},y={(90:1.25cm)},z={(45:0.75cm)}]
		\node[bullet] (000) at (0,0,0) {};
		\node[bullet] (001) at (0,0,1) {};
		\node[bullet] (010) at (0,1,0) {}; 
		\node[bullet] (011) at (0,1,1) {};
		\node[bullet] (100) at (1,0,0) {};
		\node[bullet] (101) at (1,0,1) {}; 
		\node[bullet] (110) at (1,1,0) {}; 
		\node[bullet] (111) at (1,1,1) {}; 
		
		\node[partition,fit=(000)] {} ;
		\draw[side] (000) -- (001) ;
		\draw[side] (000) -- (010) ;
		\draw[side] (000) -- (100) ;
		
		\node[partition,fit=(010) (001) (100)] {} ;
		\draw[side] (010) -- (011) ;
		\draw[side] (100) -- (101) ;
		\draw[side] (001) -- (011) ;
		\draw[side] (100) -- (110) ;
		\draw[side] (001) -- (101) ;
		\draw[side] (010) -- (110) ;
		
		\node[partition,fit=(011) (110) (101)] {} ;
		\draw[side] (110) -- (111) ;
		\draw[side] (101) -- (111) ;
		\draw[side] (011) -- (111) ;
		
		\node[partition,fit=(111)] {} ;
		
		\node[bullet] (000) at (0,0,0) {};
		
		\draw[probbar] (000) -- ++(0.1,0,0) -- ++(0,0.5,0) -- ++(-0.1,0,0) -- (000) ; 
	\end{tikzpicture}
	\begin{tikzpicture}[baseline,
		partition/.style={ellipse,fill=white, fill opacity=0.7, draw=black, minimum width=0.4cm, minimum height=0.8cm, rotate fit=45},
		x={(0:1.25cm)},y={(90:1.25cm)},z={(45:0.75cm)}]
		\node[bullet] (000) at (0,0,0) {};
		\node[bullet] (001) at (0,0,1) {};
		\node[bullet] (010) at (0,1,0) {}; 
		\node[bullet] (011) at (0,1,1) {};
		\node[bullet] (100) at (1,0,0) {};
		\node[bullet] (101) at (1,0,1) {}; 
		\node[bullet] (110) at (1,1,0) {}; 
		\node[bullet] (111) at (1,1,1) {}; 
		
		\node[partition,fit=(000)] {} ;
		\draw[side] (000) -- (001) ;
		\draw[side] (000) -- (010) ;
		\draw[side] (000) -- (100) ;
		
		\node[partition,fit=(010) (001) (100)] {} ;
		\draw[side] (010) -- (011) ;
		\draw[side] (100) -- (101) ;
		\draw[side] (001) -- (011) ;
		\draw[side] (100) -- (110) ;
		\draw[side] (001) -- (101) ;
		\draw[side] (010) -- (110) ;
		
		\node[partition,fit=(011) (110) (101)] {} ;
		\draw[side] (110) -- (111) ;
		\draw[side] (101) -- (111) ;
		\draw[side] (011) -- (111) ;
		
		\node[partition,fit=(111)] {} ;
		
		\node[bullet] (001) at (0,0,1) {};
		\node[bullet] (010) at (0,1,0) {}; 
		\node[bullet] (100) at (1,0,0) {};
		
		\draw[probbar] (001) -- ++(0.1,0,0) -- ++(0,0.5/3,0) -- ++(-0.1,0,0) -- (001) ; 
		\draw[probbar] (010) -- ++(0.1,0,0) -- ++(0,0.5/3,0) -- ++(-0.1,0,0) -- (010) ; 
		\draw[probbar] (100) -- ++(0.1,0,0) -- ++(0,0.5/3,0) -- ++(-0.1,0,0) -- (100) ; 
	\end{tikzpicture}
	\begin{tikzpicture}[baseline,
		partition/.style={ellipse,fill=white, fill opacity=0.7, draw=black, minimum width=0.4cm, minimum height=0.8cm, rotate fit=45},
		x={(0:1.25cm)},y={(90:1.25cm)},z={(45:0.75cm)}]
		\node[bullet] (000) at (0,0,0) {};
		\node[bullet] (001) at (0,0,1) {};
		\node[bullet] (010) at (0,1,0) {}; 
		\node[bullet] (011) at (0,1,1) {};
		\node[bullet] (100) at (1,0,0) {};
		\node[bullet] (101) at (1,0,1) {}; 
		\node[bullet] (110) at (1,1,0) {}; 
		\node[bullet] (111) at (1,1,1) {}; 
		
		\node[partition,fit=(000)] {} ;
		\draw[side] (000) -- (001) ;
		\draw[side] (000) -- (010) ;
		\draw[side] (000) -- (100) ;
		
		\node[partition,fit=(010) (001) (100)] {} ;
		\draw[side] (010) -- (011) ;
		\draw[side] (100) -- (101) ;
		\draw[side] (001) -- (011) ;
		\draw[side] (100) -- (110) ;
		\draw[side] (001) -- (101) ;
		\draw[side] (010) -- (110) ;
		
		\node[partition,fit=(011) (110) (101)] {} ;
		\draw[side] (110) -- (111) ;
		\draw[side] (101) -- (111) ;
		\draw[side] (011) -- (111) ;
		
		\node[partition,fit=(111)] {} ;
		
		\node[bullet] (011) at (0,1,1) {};
		\node[bullet] (101) at (1,0,1) {}; 
		\node[bullet] (110) at (1,1,0) {}; 
		
		\draw[probbar] (011) -- ++(0.1,0,0) -- ++(0,0.5/3,0) -- ++(-0.1,0,0) -- (011) ; 
		\draw[probbar] (101) -- ++(0.1,0,0) -- ++(0,0.5/3,0) -- ++(-0.1,0,0) -- (101) ; 
		\draw[probbar] (110) -- ++(0.1,0,0) -- ++(0,0.5/3,0) -- ++(-0.1,0,0) -- (110) ; 
	\end{tikzpicture}
	\begin{tikzpicture}[baseline,
		partition/.style={ellipse,fill=white, fill opacity=0.7, draw=black, minimum width=0.4cm, minimum height=0.8cm, rotate fit=45},
		x={(0:1.25cm)},y={(90:1.25cm)},z={(45:0.75cm)}]
		\node[bullet] (000) at (0,0,0) {};
		\node[bullet] (001) at (0,0,1) {};
		\node[bullet] (010) at (0,1,0) {}; 
		\node[bullet] (011) at (0,1,1) {};
		\node[bullet] (100) at (1,0,0) {};
		\node[bullet] (101) at (1,0,1) {}; 
		\node[bullet] (110) at (1,1,0) {}; 
		\node[bullet] (111) at (1,1,1) {}; 
		
		\node[partition,fit=(000)] {} ;
		\draw[side] (000) -- (001) ;
		\draw[side] (000) -- (010) ;
		\draw[side] (000) -- (100) ;
		
		\node[partition,fit=(010) (001) (100)] {} ;
		\draw[side] (010) -- (011) ;
		\draw[side] (100) -- (101) ;
		\draw[side] (001) -- (011) ;
		\draw[side] (100) -- (110) ;
		\draw[side] (001) -- (101) ;
		\draw[side] (010) -- (110) ;
		
		\node[partition,fit=(011) (110) (101)] {} ;
		\draw[side] (110) -- (111) ;
		\draw[side] (101) -- (111) ;
		\draw[side] (011) -- (111) ;
		
		\node[partition,fit=(111)] {} ;
		
		\node[bullet] (111) at (1,1,1) {}; 
		
		\draw[probbar] (111) -- ++(0.1,0,0) -- ++(0,0.5,0) -- ++(-0.1,0,0) -- (111) ; 
	\end{tikzpicture}
	$$
	Every invariant measure is a unique convex combination of these ergodic measures.
	
	Let now $p$ be an invariant measure. A function $f:X^n\to Y$ is $p$-almost surely invariant if and only if it is constant within each orbit of nonzero measure. Outside the support of $p$, $f$ can be arbitrary.
	Allowing for measure zero points, we know that $(X^n_\inv,p_\inv)$, i.e.~$X^n$ equipped with the invariant $\sigma$-algebra, is isomorphic in $\cat{PS(Borel)}$ to a finite set, which one can take of as being the set of orbits. For the case above of $\{0,1\}^3$ above, we can take the four-element set $\{0,1,2,3\}$, in bijection with the cells $A_i$ of the partition (if the measure $p$ is not supported on all the cells, one of the elements of $\{0,1,2,3\}$ will have measure zero).
	More generally, the set $\{0,\dots,k-1\}^n$ will have as many orbits as the points in the discrete $(k-1)$-dimensional simplex
	\[
	\{ (s_1,\dots,s_k) \in\N^k : s_1+\dots+s_k = n \} ,
	\]
	which is $\binom{n+k-1}{k}$.
	This way for $k=2$ we get a $1$-dimensional simplex, and the cells $A_0,A_1,A_2,A_3$ as above can be seen as four points along a segment.
	For another example $k=3$ and $n=3$ give us the following discrete triangle,
	$$
	\begin{tikzcd}[column sep=0.2em, row sep=0.5em, every arrow/.append style={dash,dotted}]
		&&& (222) \ar{dl} \ar{dr} \\
		&& (022) \ar{rr} \ar{dl} \ar{dr} && (122) \ar{dl} \ar{dr} \\
		& (002) \ar{rr} \ar{dl} \ar{dr} && (012) \ar{rr} \ar{dl} \ar{dr} && (112) \ar{dl} \ar{dr} \\
		(000) \ar{rr} && (001) \ar{rr} && (011) \ar{rr} && (111)
	\end{tikzcd}
	$$
	where we have taken the representative $(x_1,x_2,x_3)$ in its orbit which first appears in lexicographic order.
	(This is related to multisets and multinomials, a categorical account on them has been given in \cite{jacobs2021multinomial}.)
	Note that, as we saw in \Cref{discchain}, the isomorphism of $\cat{PS(Stoch)}$ between ergodic measures and $X_\inv$ is not quite a bijection of sets (up to indistinguishability): the measure plays a crucial role. We could interpret this isomorphism as a correspondence between \emph{those ergodic measures that contribute to forming $p$, taken with their weights} and $(X_\inv,p_\inv)$. Those ergodic measures that do not contribute are taken with weight zero, and up to isomorphism of $\cat{PS(Stoch)}$, that is the same as not including them at all.
	While in the discrete case we can easily keep track of the measure zero orbits, we will see that in more general cases the role of the measure is more essential.
	
	The map $r:(X^n,p)\to(X^n_\inv,p_\inv)$ is the one that (almost surely) maps each point to its orbit. 
	Its Bayesian inverse, the map $r^\dag:(X^n_\inv,p_\inv)\to (X^n,p)$, is a stochastic map mapping each orbit, stochastically, \emph{to all the points of the orbit with equal probability}:
	\begin{center}
		\begin{tikzpicture}[baseline,
			partition/.style={ellipse,fill=white, fill opacity=0.7, draw=black, minimum width=0.6cm, minimum height=1.2cm, rotate fit=90},
			transition/.style={midway,  font=\scriptsize, color=probcolor},
			x={(45:2cm)},y={(135:2cm)},z={(90:1.2cm)}]
			\node[bullet] (000) at (0,0,0) {};
			\node[bullet] (001) at (0,0,1) {};
			\node[bullet] (010) at (0,1,0) {}; 
			\node[bullet] (011) at (0,1,1) {};
			\node[bullet] (100) at (1,0,0) {};
			\node[bullet] (101) at (1,0,1) {}; 
			\node[bullet] (110) at (1,1,0) {}; 
			\node[bullet] (111) at (1,1,1) {}; 
			
			\node[partition,fit=(000),label=below:$A_0$] {} ;
			\draw[side] (000) -- (001) ;
			\draw[side] (000) -- (010) ;
			\draw[side] (000) -- (100) ;
			
			\node[partition,fit=(010) (001) (100),label=below:$A_1$] {} ;
			\draw[side] (010) -- (011) ;
			\draw[side] (100) -- (101) ;
			\draw[side] (001) -- (011) ;
			\draw[side] (100) -- (110) ;
			\draw[side] (001) -- (101) ;
			\draw[side] (010) -- (110) ;
			
			\node[partition,fit=(011) (110) (101),label=below:$A_2$] {} ;
			\draw[side] (110) -- (111) ;
			\draw[side] (101) -- (111) ;
			\draw[side] (011) -- (111) ;
			
			\node[partition,fit=(111),label=below:$A_3$] {} ;
			
			\node[bullet] () at (0,0,0) {};
			\node[bullet] () at (0,0,1) {};
			\node[bullet] () at (0,1,0) {}; 
			\node[bullet] () at (0,1,1) {};
			\node[bullet] () at (1,0,0) {};
			\node[bullet] () at (1,0,1) {}; 
			\node[bullet] () at (1,1,0) {}; 
			\node[bullet] () at (1,1,1) {}; 
			
			\node[bullet,label={180:$0$}] (a0) at (-2,2,0) {};
			\node[bullet,label={180:$1$}] (a1) at (1/3-2,1/3+2,1/3) {};
			\node[bullet,label={180:$2$}] (a2) at (2/3-2,2/3+2,2/3) {};
			\node[bullet,label={180:$3$}] (a3) at (1-2,1+2,1) {};
			
			\draw[probbar] (000) -- ++(0.1,-0.1,0) -- ++(0,0,0.5) -- ++(-0.1,0.1,0) -- (000) ; 
			\draw[probbar] (001) -- ++(0.1,-0.1,0) -- ++(0,0,0.5/3) -- ++(-0.1,0.1,0) -- (001) ; 
			\draw[probbar] (010) -- ++(0.1,-0.1,0) -- ++(0,0,0.5/3) -- ++(-0.1,0.1,0) -- (010) ; 
			\draw[probbar] (011) -- ++(0.1,-0.1,0) -- ++(0,0,0.5/3) -- ++(-0.1,0.1,0) -- (011) ; 
			\draw[probbar] (100) -- ++(0.1,-0.1,0) -- ++(0,0,0.5/3) -- ++(-0.1,0.1,0) -- (100) ; 
			\draw[probbar] (101) -- ++(0.1,-0.1,0) -- ++(0,0,0.5/3) -- ++(-0.1,0.1,0) -- (101) ; 
			\draw[probbar] (110) -- ++(0.1,-0.1,0) -- ++(0,0,0.5/3) -- ++(-0.1,0.1,0) -- (110) ; 
			\draw[probbar] (111) -- ++(0.1,-0.1,0) -- ++(0,0,0.5) -- ++(-0.1,0.1,0) -- (111) ; 
			
			\draw[ar,draw=probcolor] (a0) to[bend right=18] node[above,transition]{$1$} (000) ;
			\draw[ar,draw=probcolor] (a1) to[bend right=12] node[inner sep=1pt,pos=0.55,below,transition]{$1/3$} (001) ;
			\draw[ar,draw=probcolor] (a1) to[bend right=6] node[inner sep=1pt,above,transition]{$1/3$} (010) ;
			\draw[ar,draw=probcolor] (a1) to[bend right=24] node[inner sep=1pt,below,transition]{$1/3$} (100) ;
			\draw[ar,draw=probcolor] (a2) to[bend left=6] node[inner sep=1pt,below,transition]{$1/3$} (011) ;
			\draw[ar,draw=probcolor] (a2) to[bend left=24] node[inner sep=1pt,above,transition]{$1/3$} (101) ;
			\draw[ar,draw=probcolor] (a2) to[bend left=12] node[inner sep=1pt,pos=0.55,above,transition]{$1/3$} (110) ;
			\draw[ar,draw=probcolor] (a3) to[bend left=18] node[above,transition]{$r^\dag(111|3)=1$} (111) ;
		\end{tikzpicture}
	\end{center}
	Therefore, the idempotent $e_D=r^\dag r:(X^n,p)\to (X^n,p)$ takes each point and replaces it with a random point of the same orbit, where all points are taken with equal probability. In other words, it \emph{averages the probability within the orbit}.
	More generally, given any morphism $g:(A,b)\to(X,p)$, we have that $e_Dg:(A,b)\to(X,p)$ is an invariant state, which we can interpret as the fact that
	every component of the measure $p$ (parametrized by $A$) is replaced by an average over its orbit.
	
	Similarly, given a kernel $f:(X,p)\to(Y,q)$, the kernel $f e_D:(X,p)\to(Y,q)$ is necessarily invariant. For $Y=\{0,1\}$, this kernel is equivalently a function $X\to[0,1]$, and after precomposing with $e_D$, the resulting function is harmonic, i.e.~constant within the orbits. 
	So once again, $e_D$ can be seen as ``averaging the levels'' within an orbit.

	\subsection{De Finetti's theorem and the Hewitt-Savage law}\label{definetti}
	
	This example in some sense, will be the infinite analogue of the previous example. 
	Consider a standard Borel space $X$, and form the countably infinite product $X^\N$. (Categorically, this is a \emph{Kolmogorov product}, see \cite{fritzrischel2019zeroone} for the definitions and the notation we use.)
	Consider the group $S_\infty$ of all possible finite permutations of $\N$, which acts on $X^\N$ by permuting at each time finitely many components. This defines a countably indexed, deterministic dynamical system $D:S_\infty\to\cat{BorelStoch}$ acting on the object $X^\N$. 
	A measure $p$ on $X^\N$ which is invariant under all these permutations is called an \emph{exchangeable measure}.
	If we have such a measure, we also get a dynamical system in $\cat{PS(Borel)}$. 
	
	In \cite[Theorem~4.4]{fritz2021definetti}, and using the addition of \cite[Section~8]{moss2022probability}, the following category-theoretic version of de Finetti's theorem is stated and proven categorically:
	
	\begin{theorem}\label{definettithm}
		The limit in $\cat{BorelStoch}$ of $D:S_\infty\to\cat{BorelStoch}$ is given by the space of measures $PX$, with the following cone.
		$$
		\begin{tikzcd}
			PX \ar{r}{\cop_\N} & (PX)^\N \ar{r}{\samp^\N} & X^\N 
		\end{tikzcd}
		$$
	\end{theorem}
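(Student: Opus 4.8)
The plan is to recognize this as a reformulation of the classical de Finetti theorem and to discharge it by reducing the universal property of the limit to the (parametrized) classical statement. First I would check that $s \coloneqq \samp^\N\circ\cop_\N\colon PX\to X^\N$ really is a cone over $D$, i.e.\ that $\sigma\circ s = s$ for every finite permutation $\sigma\in S_\infty$. Since $\cop_\N$ sends a measure $\mu$ to the constant tuple $(\mu,\mu,\dots)$ and $\samp^\N$ samples independently in each coordinate, the composite sends $\mu$ to the i.i.d.\ measure $\mu^{\otimes\N}$, which is visibly exchangeable; diagrammatically this is just the symmetry of $\cop_\N$ under reindexing, together with the componentwise form of $\samp^\N$, so the permutation can be absorbed. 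This is the easy half.

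For the universal property, a cone with apex $W$ is a morphism $c\colon W\to X^\N$ of $\cat{BorelStoch}$ with $\sigma\circ c = c$ for all $\sigma$, i.e.\ a Markov kernel such that $c(-\mid w)$ is an exchangeable probability measure on $X^\N$ for every $w\in W$. (The special case $W=1$ recovers exactly one exchangeable measure on $X^\N$, which is where ordinary de Finetti lives.) Since $X$ is standard Borel, the classical de Finetti theorem gives, for each $w$, a \emph{unique} Borel probability measure $\nu_w$ on $PX$ with $c(-\mid w)=\int_{PX}\mu^{\otimes\N}\,\nu_w(d\mu)$. Setting $u(-\mid w)\coloneqq\nu_w$ produces a candidate factorization $u\colon W\to PX$ with $s\circ u = c$ holding pointwise, hence strictly as kernels. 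Uniqueness is then immediate from the injectivity of the de Finetti mixing map $\nu\mapsto\int_{PX}\mu^{\otimes\N}\,\nu(d\mu)$ from $P(PX)$ to $P(X^\N)$: any competing factorization $u'$ satisfies $\int\mu^{\otimes\N}\,u'(d\mu\mid w)=c(-\mid w)$ for every $w$, forcing $u'(-\mid w)=\nu_w=u(-\mid w)$ pointwise, and therefore $u'=u$ on the nose, as required for a limit in $\cat{BorelStoch}$.

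The main obstacle, and the precise reason the statement needs the addition of \cite{moss2022probability} rather than de Finetti alone, is to verify that the pointwise assignment $w\mapsto\nu_w$ is \emph{measurable}, so that $u$ is a genuine Markov kernel $W\to PX$ and not merely a $W$-indexed family of measures. I would handle this by appealing to the parametrized form of the de Finetti representation established there, which upgrades the pointwise existence-and-uniqueness to a version natural in the parameter, precisely packaging the classical analytic input (existence of the representing measure, using standard Borelness of $X$) and the uniqueness (injectivity of the mixing map) into the universal factorization through $PX$.
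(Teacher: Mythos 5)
Your argument is essentially correct, but it takes the opposite route from the paper, which does not prove \Cref{definettithm} in-text at all: the theorem is imported from \cite[Theorem~4.4]{fritz2021definetti} (with the addition of \cite[Section~8]{moss2022probability}), where it is established synthetically from Markov-categorical axioms (representability, conditionals, Kolmogorov products), and where the classical de Finetti theorem emerges as a \emph{consequence} of the limit property rather than serving as an ingredient. You run the reduction in the other direction: a cone $c\colon W\to X^\N$ in $\cat{BorelStoch}$ satisfies $\sigma\circ c=c$ strictly, so every $c(-\mid w)$ is exchangeable; the classical Hewitt--Savage/de Finetti theorem (valid since $X$ is standard Borel) gives a unique representing measure $\nu_w\in P(PX)$, and injectivity of the mixing map gives strict (pointwise, hence on-the-nose) uniqueness of the mediating kernel, which is exactly what a limit in $\cat{BorelStoch}$ requires. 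This is sound, and the step you single out---measurability of $w\mapsto\nu_w$---is indeed the only real issue; however, your attribution of that step to \cite{moss2022probability} is shaky, since that reference supplies a categorical completion of the synthetic proof, not a parametrized classical de Finetti theorem. The gap is easy to close without it: the de Finetti measure of an exchangeable law is the pushforward of that law under the Borel map $\mathrm{emp}\colon X^\N\to PX$ sending a sequence to its limiting empirical measure (defined arbitrarily on the Borel set where the weak limit fails to exist), so $\nu_w(B)=c(\mathrm{emp}^{-1}(B)\mid w)$ is measurable in $w$ simply because $c$ is a kernel; alternatively, the mixing map $i\colon P(PX)\to P(X^\N)$ is an injective Borel map between standard Borel spaces, hence by Lusin--Souslin a Borel isomorphism onto its Borel image, and you may define the mediating kernel as $i^{-1}\circ c$. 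As for what each approach buys: yours is shorter and perfectly adequate for this paper, which only ever uses the $\cat{BorelStoch}$ instance; the cited synthetic proof is heavier but applies to any Markov category with the requisite structure and yields the classical theorem as output rather than consuming it as input---so from the foundational standpoint of the works the paper cites, your argument would be circular, while from the standpoint of this paper it is a legitimate and more elementary alternative.
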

	
	While in the original source the theorem is proven purely in terms of categorical axioms, here we are mostly interested in its instantiation in $\cat{BorelStoch}$.
	Let us write the kernel above in measure-theoretic terms. For brevity, we will denote the set $A_1\times\dots\times A_n\times X\times X\times\dots$ simply by $A_1\times\dots\times A_n$. Now on such a set, and for $p\in PX$, the kernel above gives
	\begin{equation}\label{infprodmeas}
		(\samp^\N\circ\cop_\N) (A_1\times\dots\times A_n| p) = p(A_1)\cdots p(A_n) ,
	\end{equation}
	i.e.~this kernel is taken repeated, independent (identical) copies of $p$. 
	
	Let us now show that $PX$, with this cone, is also a limit in $\cat{PS(Borel)}$.
	
	\begin{theorem}\label{samelimit}
		Let $D:M\to\cat{BorelStoch}$ be a dynamical system acting on $X$, with $M$ countable, and let $\ell:L\to X$ be a limiting cone for $D$ in $\cat{BorelStoch}$.
		Let $p$ be an invariant probability measure on $X$. Then
		\begin{itemize}
			\item there exists a unique probability measure $\lambda$ on $L$ making the kernel $\ell$ measure-preserving;
			\item (the equivalence class of) $\ell:(L,\lambda)\to (X,p)$ is a limiting cone in $\cat{PS(Borel)}$ for the corresponding measure-preserving dynamical system.
		\end{itemize}
	\end{theorem}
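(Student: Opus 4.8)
The plan is to reduce everything to the universal property of the limit $\ell:L\to X$ in $\cat{BorelStoch}$, exploiting the fact that an invariant probability measure on $X$ is literally the same data as a cone over $D$ with apex the one-point space $1$.

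First I would prove the first bullet point. Since $p$ is invariant we have $mp=p$ for every $m\in M$, so $p:1\to X$ is exactly a cone over $D$ in $\cat{BorelStoch}$. By the universal property of the limit $L$ there is a unique morphism $\lambda:1\to L$ with $\ell\circ\lambda=p$; but a morphism $1\to L$ is precisely a probability measure on $L$, and $\ell\circ\lambda=p$ is precisely the statement that $\ell$ is measure-preserving. This gives existence and uniqueness of $\lambda$ at once. The very same argument applied to an \emph{arbitrary} invariant measure $\nu$ on $X$ yields the key fact I will use twice: for every invariant measure $\nu$ on $X$ there is a unique measure $\nu_L$ on $L$ with $\ell\circ\nu_L=\nu$.

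Next I would establish the cone and the existence half of the universal property in $\cat{PS(Borel)}$. Since $m\circ\ell=\ell$ holds strictly it holds $\lambda$-a.s., so $\ell:(L,\lambda)\to(X,p)$ is a cone in $\cat{PS(Borel)}$. For the factorization, let $g:(A,b)\to(X,p)$ be $b$-a.s.\ left-invariant, i.e.\ $m\circ g\aseq_b g$ for all $m$. Because $M$ is countable and $X$ is standard Borel, for each $m$ the a.s.\ equality holds on a full-measure set $B_m\subseteq A$ that may be chosen independently of the test set, and $B:=\bigcap_{m\in M}B_m$ still has full measure. I would then \emph{rectify} $g$ to a strict cone $g':A\to X$ by setting $g'(\cdot\,|a):=g(\cdot\,|a)$ for $a\in B$ and $g'(\cdot\,|a):=p$ for $a\notin B$; this is measurable, agrees with $g$ off the null set $A\setminus B$, and satisfies $m\circ g'=g'$ strictly (on $B$ because $g$ is strictly invariant there, off $B$ because $p$ is invariant). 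The universal property of $L$ in $\cat{BorelStoch}$ then gives a unique $h:A\to L$ with $\ell\circ h=g'$. Since $\ell\circ(h\circ b)=g'\circ b=g\circ b=p$, the uniqueness from the first paragraph forces $h\circ b=\lambda$, so $h$ is measure-preserving and defines $\tilde g:=h$ in $\cat{PS(Borel)}$ with $\ell\circ\tilde g=g'\aseq_b g$.

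Finally, uniqueness of the factorization is the step I expect to carry the real content, and it follows cleanly from the key fact. Suppose $\tilde g_1,\tilde g_2:(A,b)\to(L,\lambda)$ both satisfy $\ell\circ\tilde g_i\aseq_b g$. Because $m\circ\ell=\ell$ strictly, each kernel $\ell\circ\tilde g_i$ is a strict cone, so for \emph{every} $a$ the measure $\nu_a:=(\ell\circ\tilde g_i)(\cdot\,|a)$ is invariant on $X$; and since $\ell\circ\tilde g_1\aseq_b\ell\circ\tilde g_2$, these agree for $b$-almost every $a$. For such $a$, both $\tilde g_1(\cdot\,|a)$ and $\tilde g_2(\cdot\,|a)$ are measures on $L$ whose $\ell$-pushforward is the invariant measure $\nu_a$, so by the uniqueness clause of the key fact they coincide; hence $\tilde g_1\aseq_b\tilde g_2$. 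The main obstacle is exactly this passage from strict to almost-sure data: the existence half is handled by the countability of $M$, which lets one intersect the full-measure sets and rectify $g$ to an honest $\cat{BorelStoch}$ cone, while the uniqueness half rests entirely on the pointwise bijection between invariant measures on $X$ and their lifts to $L$ supplied by the first paragraph.
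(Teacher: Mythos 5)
Your proof is correct, and while it shares the paper's overall strategy---use countability of $M$ together with the standard Borel property to upgrade the almost-surely invariant cone to a strictly invariant one, then invoke the universal property of $L$ in $\cat{BorelStoch}$---both of its key technical steps are genuinely different from the paper's. For existence, the paper restricts $g$ to a full-measure subset on which invariance holds strictly, factors that restriction through $L$, and then transfers back along the mod-zero isomorphism of \Cref{isomodzero}, which requires choosing an inverse kernel $j$; you instead patch $g$ on the complementary null set, redefining it there to be the invariant measure $p$, which produces a strict cone on the whole original domain and so bypasses \Cref{isomodzero} and the inverse kernels entirely. For uniqueness, the paper restricts to a common full-measure set where both factorizations agree with $g$ strictly and then uses uniqueness of mediating morphisms out of that subspace; you instead argue fiberwise: since $m\circ\ell=\ell$ holds strictly, each fiber $(\ell\circ\tilde{g}_i)(\cdot\,|a)$ is an invariant measure on $X$, i.e.\ a cone with apex $1$, so the uniqueness clause of the universal property (your ``key fact'', the same instance that produces $\lambda$) pins down its lift to $L$, forcing $\tilde{g}_1(\cdot\,|a)=\tilde{g}_2(\cdot\,|a)$ for $b$-almost every $a$; this step needs no countability at all. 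The one hypothesis both of your steps silently rely on, and which you should state explicitly, is that almost-sure equality of kernels with standard Borel codomain holds on a single full-measure set independent of the test set (the fact the paper records just before \Cref{defPSB}); granted that, your argument is arguably cleaner than the paper's, keeping all constructions on the original domain and reducing uniqueness to the apex-$1$ case of the very universal property being exploited, at the cost of being more pointwise and less diagrammatic than the paper's treatment.
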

	\begin{proof}
		First of all, if $p$ is an invariant measure on $X$, then in particular it is a morphism $p:1\to X$ such that for all $m\in M$, $m\circ p=p$. By the universal property of $L$ in $\cat{BorelStoch}$, there is then a unique morphism (i.e.~measure) $\lambda:1\to L$ such that $\ell\circ\lambda=p$.
		
		Consider now a morphism $g:(Y,q)\to (X,p)$ which is almost surely left-invariant, i.e.~$mg\aseq g$ for all $m\in M$. 
		Since $Y$ is standard Borel and $M$ is countable, there exists a subset $A\subseteq Y$ of measure one (and also standard Borel), such that for all $m\in M$, for all $a\in A$, and for all measurable $B\subseteq X$, 
		$$
		\int_X m(B|x)\,g(dx|a) = g(B|a) .
		$$
		In other words, if $i:A\to Y$ denotes the kernel induced by the inclusion map, for all $m\in M$ the outer triangle in the diagram below commutes strictly, not just almost surely, and so it is a cone in $\cat{BorelStoch}$.
		$$
		\begin{tikzcd}[row sep=tiny]
			&& X \ar{dddd}{m} \\
			& Y \ar{ur}{g} \\
			A \ar{ur}{i} \ar{dr}[swap]{i} \ar[dashed]{r} & L \ar{uur}[swap]{\ell} \ar{ddr}{\ell} \\
			& Y \ar{dr}[swap]{g} \\
			&& X
		\end{tikzcd}
		$$    
		By the universal property of the limit in $\cat{BorelStoch}$, there exists a unique kernel $\tilde{g}:A\to L$ making the diagram above commute.
		Note that $\tilde{g}$ is measure-preserving as a kernel: since $g\circ i=\ell\circ\tilde{g}$, we have that $p=g\circ i\circ q'=\ell\circ\tilde{g}\circ q'$, and since $\lambda$ is the unique measure on $L$ such that $\ell\circ\lambda=p$, it must be that $\tilde{g}\circ q'=\lambda$, so $\tilde{g} : (A, q') \to (L, \lambda)$ is a kernel in $\cat{PS(Borel)}$.
		
		Now by \Cref{isomodzero}, the map $i$ induces an isomorphism $(A,q')\to(Y,q)$ of $\cat{PS(Borel)}$, where $q'$ denotes the restriction of $q$ to $A$. Take now a kernel $j:Y\to A$ which, in $\cat{PS(Borel)}$, is inverse to $i$, and since the following diagram commutes,
		$$
		\begin{tikzcd}
			&&& (X,p) \ar{dd}{m} \\
			(Y,q) \ar[bend left=10]{urrr}{g} \ar{r}{j} \ar[bend right=10]{drrr}[swap]{g} & (A,q') \ar[bend left=10]{urr}[swap]{g\circ i} \ar[bend right=10]{drr}{g\circ i} \ar{r}{\tilde{g}} & (L,\lambda) \ar{ur}[swap]{\ell} \ar{dr}{\ell} \\
			&&& (X,p)
		\end{tikzcd}
		$$
		we have a kernel, namely $\tilde{g}\circ j$, such that $\ell\circ(\tilde{g}\circ j)\aseq g$.
		
		For uniqueness, suppose that another kernel $h:Y\to L$ satisfies $\ell\circ h\aseq g$. 
		Then once again, there exists a subset $B\subseteq Y$ of measure one where $\ell\circ h$ and $g$ are equal. More explicitly, denote by $i':B\to Y$ the kernel induced by the inclusion map, and by $j':Y\to B$ a kernel giving the inverse of $i'$ in $\cat{PS(Borel)}$.
		Then in the following diagram of $\cat{BorelStoch}$,
		$$
		\begin{tikzcd}
			&&& X\ar{dd}{m} \\
			B \ar{r}{i'} & Y \ar[bend left=20]{urr}{g} \ar{r}[shift left]{\tilde{g}\circ j} \ar[shift right]{r}[swap]{h} \ar[bend right=20]{drr}[swap]{g} & L \ar{ur}[swap]{\ell} \ar{dr}{\ell} \\
			&&& X
		\end{tikzcd}
		$$
		we have that $\ell\circ (\tilde{g}\circ j)\circ i'=\ell\circ h\circ i'$. 
		By the universal property of $L$, by uniqueness it must then mean that $(\tilde{g}\circ j)\circ i'=h\circ i'$.
		But then
		$$
		h \aseq h\circ i'\circ j' \aseq (\tilde{g}\circ j)\circ i' \circ j' \aseq \tilde{g}\circ j .
		$$
		Therefore there exists a unique ($q$-a.s.)~kernel $Y\to L$ making the diagram above commute (almost surely), and so $(L,\lambda)$ is a limit in $\cat{PS(Borel)}$. 
	\end{proof}
	
	\begin{corollary}[Almost-sure version of de Finetti's theorem]\label{asdefinetti}
		Consider the system $D:S_\infty\to\cat{BorelStoch}$ acting on $X^\N$ by finite permutations. 
		Consider an exchangeable measure $p$ on $X^\N$, which induces a dynamical system $D'$ in $\cat{PS(Borel)}$.
		Then 
		\begin{itemize}
			\item There is a unique measure $\mu$ on $PX$ such that $\samp^\N\circ \cop_\N\circ \mu=p$;
			\item The limit of $D'$ in $\cat{PS(Borel)}$ of is given by $(PX,\mu)$, with the following cone.
			$$
			\begin{tikzcd}
				(PX,\mu) \ar{r}{\cop_\N} & (PX^\N,\cop_\N\circ\mu) \ar{r}{\samp^\N} & (X^\N,p)
			\end{tikzcd}
			$$
		\end{itemize}        
	\end{corollary}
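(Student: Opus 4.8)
The plan is to obtain this corollary as a direct instantiation of \Cref{samelimit}, applied to the de Finetti limit computed in \Cref{definettithm}. Concretely, I would take the monoid $M$ of \Cref{samelimit} to be the group $S_\infty$, the object on which it acts to be $X^\N$, the limiting cone $\ell:L\to X$ to be the cone $\samp^\N\circ\cop_\N:PX\to X^\N$ supplied by \Cref{definettithm}, and the invariant measure to be the exchangeable measure $p$. Everything then follows by matching notation, so the only real content is checking that the hypotheses of \Cref{samelimit} are met in this situation.

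There are three hypotheses to verify. First, $S_\infty$ must be countable: this holds because a finite permutation of $\N$ is determined by a finite amount of data (a finite subset of $\N$ together with a permutation of it), and there are only countably many such choices. Second, $p$ must be invariant for the action, which is precisely the definition of an exchangeable measure, namely invariance under all finite permutations. Third, we need a limiting cone for $D$ in $\cat{BorelStoch}$, and $PX$ (which is standard Borel, so that $(PX,\mu)$ is a legitimate object of $\cat{PS(Borel)}$) together with $\samp^\N\circ\cop_\N$ supplies exactly this by \Cref{definettithm}.

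With these hypotheses in place, \Cref{samelimit} yields a unique measure $\lambda$ on $PX$ making the cone measure-preserving; writing $\mu\coloneqq\lambda$, measure-preservation of $\ell=\samp^\N\circ\cop_\N$ is exactly the equation $\samp^\N\circ\cop_\N\circ\mu=p$, which establishes the first bullet. The second conclusion of \Cref{samelimit} states that $\ell:(PX,\mu)\to(X^\N,p)$ is a limiting cone in $\cat{PS(Borel)}$ for the measure-preserving dynamical system $D'$ induced by $p$, which is exactly the second bullet. Since the corollary is purely an application of the two cited theorems, I do not expect a genuine obstacle; the only points requiring a moment's care are the bookkeeping that exchangeability coincides with the invariance hypothesis and that the countability of $S_\infty$ is what legitimizes the passage from the strict cone to the almost-sure cone used inside the proof of \Cref{samelimit}.
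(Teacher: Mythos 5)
Your proposal is correct and matches the paper's own route exactly: the paper proves \Cref{samelimit} precisely so that \Cref{asdefinetti} follows by instantiating it with $M=S_\infty$ (countable), the strict limit cone $\samp^\N\circ\cop_\N:PX\to X^\N$ from \Cref{definettithm}, and the exchangeable (hence invariant) measure $p$. Your verification of the three hypotheses, including the countability of $S_\infty$, is the same bookkeeping the paper relies on.
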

	
	It follows that $(PX,\mu)$ is also the colimit of $D'$. In other words, $(X^\N_\inv,p_\inv)$ and $(PX,\mu)$ are isomorphic in $\cat{PS(Borel)}$.
	We have a commutative diagram of $\cat{PS(Borel)}$ as follows.
	\begin{equation*}
		\begin{tikzcd}[column sep=small]
			& (X^\N,p) \ar{dr}{r} \\
			(PX,\mu) \ar{ur}{\samp^\N\circ \cop_\N} \ar[leftrightarrow]{rr}[swap]{\cong} && (X_\inv,p_\inv) 
		\end{tikzcd}
	\end{equation*}
	Therefore we can take as colimit $(X_\inv,p_\inv)$ exactly the space $(PX,\mu)$.
	This way, the map $\samp^\N\circ \cop_\N:PX\to X^\N$ is the map $r^\dag$.
	A representative of the map $r$ was called $p_{|tail}^\sharp$ in \cite{fritz2021definetti}.
	
	Recall from the previous section that for permutations of finite sequences we can take $X_\inv$ to be a discrete simplex.
	The situation is similar here: for a finite set $X$, the set $PX$ can be seen as the whole simplex. 
	The \emph{law of large numbers} is related to the fact that for increasingly longer sequences, the discrete simplex becomes closer and closer to the simplex $PX$. (See also \cite{fritz2019kantorovich}.)
	A categorical treatment of the law of large numbers will be left for future work.
	
	In this setting we also get a version of the Hewitt-Savage zero-one law.
	A categorical version of such law was stated and proven in \cite[Theorem~5.4]{fritzrischel2019zeroone}.
	Here we can categorically prove the almost-sure~version, here stated for $\cat{C}=\cat{BorelStoch}$:
	
	\begin{theorem}\label{hs}
		Let $p$ be an exchangeable measure on $X^\N$.
		The following conditions are equivalent.
		\begin{enumerate}
			\item\label{conderg} The measure $p$ is ergodic (under permutations);
			\item\label{conddet} The corresponding measure $p_\inv$ on $X_\inv$ is deterministic;
			\item\label{condinv} For every $p$-a.s. deterministic, $p$-a.s. permutation-invariant morphism $s:X^\N\to Y$, the measure $s\circ p$ is deterministic.
			\item\label{conddelta} The corresponding measure $\mu$ on $PX$ is a Dirac delta $\delta_q$ at some $q\in PX$;
			\item\label{condind} The measure $p$ displays independence of all the $X$ in $X^\N$, i.e.~it is an infinite product measure (using Kolmogorov's extension theorem, all its finite marginals are product measures).
		\end{enumerate}
	\end{theorem}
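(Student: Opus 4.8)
The plan is to funnel everything through the de Finetti isomorphism of \Cref{asdefinetti}, which identifies the invariant object $(X^\N_\inv, p_\inv)$ with $(PX, \mu)$ in $\cat{PS(Borel)}$, together with the observation that, by \Cref{iso bayesian} and \Cref{dag id}, every isomorphism of $\cat{PS(Borel)}$ is almost surely deterministic. With this in hand the five conditions split into three groups linked by short arguments, proved in the cyclic order \ref{conderg}$\Leftrightarrow$\ref{conddet}$\Leftrightarrow$\ref{condinv}, then \ref{conddet}$\Leftrightarrow$\ref{conddelta}, then \ref{conddelta}$\Leftrightarrow$\ref{condind}.

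First, conditions \ref{conderg}, \ref{conddet}, and \ref{condinv} are equivalent essentially by definition: \ref{conddet} and \ref{condinv} are precisely the two defining clauses of ergodicity in \Cref{deferg} (taking $r : X^\N \to X^\N_\inv$ as the colimiting cocone, and reading $q = s \circ p$ in the second clause), so their equivalence, and hence with \ref{conderg}, is already recorded there and needs no further work.

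Next I would prove \ref{conddet} $\Leftrightarrow$ \ref{conddelta}. The key fact is that almost surely deterministic morphisms are closed under composition (\Cref{a-s det composition}), and that a deterministic state is exactly an a.s.\ deterministic morphism out of the unit. Since the isomorphism $(X^\N_\inv, p_\inv) \cong (PX, \mu)$ carries $p_\inv$ to $\mu$ and both it and its inverse are a.s.\ deterministic, composing on either side shows that $p_\inv$ is a.s.\ deterministic if and only if $\mu$ is. It then remains to note that an a.s.\ deterministic state on the standard Borel space $PX$ is a.s.\ equal to a Dirac delta $\delta_q$ at some $q \in PX$, which is precisely \ref{conddelta}.

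Finally, for \ref{conddelta} $\Leftrightarrow$ \ref{condind} I would compute directly with the de Finetti cone \eqref{infprodmeas}. If $\mu = \delta_q$ then $p = \samp^\N \circ \cop_\N \circ \delta_q$, and evaluating on cylinders via \eqref{infprodmeas} yields $p(A_1 \times \dots \times A_n) = q(A_1)\cdots q(A_n)$, so $p$ is the infinite product measure $q^{\otimes \N}$. Conversely, if $p = q^{\otimes \N}$, the same computation gives $\samp^\N \circ \cop_\N \circ \delta_q = p$, and the uniqueness clause of \Cref{asdefinetti} forces $\mu = \delta_q$. The main obstacle I anticipate is the bookkeeping in \ref{conddet} $\Leftrightarrow$ \ref{conddelta}: one has to confirm that the notion of ``deterministic'' transported across a $\cat{PS(Borel)}$-isomorphism agrees with the Markov-categorical one, i.e.\ that a.s.\ determinism is genuinely preserved by a.s.\ deterministic isomorphisms, and that on a standard Borel space the a.s.\ deterministic states are exactly the Dirac measures; the remaining implications are either definitional or routine cylinder-set calculations.
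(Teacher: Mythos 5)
Your proof follows the paper's own argument essentially step for step: (\ref{conderg})$\Leftrightarrow$(\ref{conddet})$\Leftrightarrow$(\ref{condinv}) is read off from \Cref{deferg}; (\ref{conddet})$\Leftrightarrow$(\ref{conddelta}) is transported across the isomorphism $(X^\N_\inv,p_\inv)\cong(PX,\mu)$ of \Cref{asdefinetti}, where your explicit justification via \Cref{iso bayesian} and \Cref{a-s det composition} is exactly what the paper's terser statement relies on; and (\ref{conddelta})$\Leftrightarrow$(\ref{condind}) is the same cylinder-set computation with \eqref{infprodmeas}. The one elision is in (\ref{condind})$\Rightarrow$(\ref{conddelta}): condition (\ref{condind}) only asserts that $p$ is a product measure, with a priori possibly different factors, so before writing $p=q^{\otimes\N}$ you must invoke exchangeability to conclude that all factors equal the common one-dimensional marginal $q$; the paper makes this step explicit. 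Your appeal to the uniqueness clause of \Cref{asdefinetti}, in place of the paper's left-inverse computation $\mu=(\samp^\N\circ\cop_\N)^\dag\circ\samp^\N\circ\cop_\N\circ\delta_q=\delta_q$, is a legitimate and slightly cleaner way to finish, since that uniqueness is precisely what the Bayesian-inverse argument establishes.
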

	
	\begin{proof}
		\begin{itemize}
			\item $\ref{conderg}\Leftrightarrow\ref{conddet}\Leftrightarrow\ref{condinv}$: This is exactly \Cref{deferg}.
			
			\item $\ref{conddet}\Leftrightarrow\ref{conddelta}$: Since $(PX,\mu)$ and $(X_\inv,p_\inv)$ are isomorphic in $\cat{PS(Borel)}$, the measure $\mu$ is deterministic if and only if the measure $p_\inv$ is. Now since $PX$ is standard Borel (not just up to isomorphism), $\mu$ is deterministic if and only if it is a Dirac delta. 
			
			\item $\ref{conddelta}\Rightarrow\ref{condind}$: Recall that the limit cone kernel $\samp^\N\circ \cop_\N:PX\to X^\N$ maps a measure $q$ to the infinite product measure \eqref{infprodmeas}. 
			If $\mu=\delta_q$,  
			\begin{align*}
				p(A_1\times\dots\times A_n) &= (\samp^\N\circ \cop_\N\circ\delta_q) (A_1\times\dots\times A_n) \\
				&= \int_{PX} (\samp^\N\circ \cop_\N) (A_1\times\dots\times A_n|q') \,\delta_q(dq') \\
				&= q(A_1)\cdots q(A_n) .
			\end{align*}
			
			\item $\ref{condind}\Rightarrow\ref{conddelta}:$ Since $p$ is exchangeable, it must then be the infinite product of the \emph{same} measure $q$, for $q\in PX$ which can be obtained as the first (or any other) marginal of $p$ on $X$. 
			In other words, necessarily $p$ is in the form $\samp^\N\circ \cop_\N\circ \delta_q$ for some $q\in PX$. 
			Since the universal cone map has a left-inverse (its Bayesian inverse), we must have that 
			$$
			\mu = (\samp^\N\circ \cop_\N)^\dag \circ \samp^\N\circ \cop_\N \circ \delta_q = \delta_q .
			$$
		\end{itemize}
	\end{proof}

	\subsection{Bernoulli shifts}\label{bernoulli}
	
	As in the previous section, consider a Kolmogorov product $X^\N$. 
	Instead of permutations, we consider \emph{shifts}. Denote by $\sigma:X^\N\to X^\N$ the map discarding the first coordinate $(x_0,x_1,x_2,\dots)\mapsto(x_1,x_2,x_3,\dots)$, induced by the endomorphism $n\mapsto n+1$ of $\N$. This map was called $X^s$ in \cite{fritz2021definetti}.
	
	Note that the map $\cop_\N:X\to X^\N$ is not only permutation-(left-)invariant, but shift-invariant as well.
	Because of this, every exchangeable measure $p$, in the sense of permutations, as in the previous section, is shift-invariant as well. Indeed, using \Cref{asdefinetti},
	\begin{align*}
		\sigma\circ p &= \sigma\circ\samp^\N\circ\cop_\N\circ\mu \\
		&= \samp^\N\circ\sigma\circ\cop_\N\circ\mu \\
		&= \samp^\N\circ\cop_\N\circ\mu \\
		&= p .
	\end{align*}
	
	In this section we will only consider exchangeable measures.
	We also use the following statement, which can be considered a categorical version of the fact that the permutation-invariant $\sigma$-algebra and the shift-invariant $\sigma$-algebra are isomorphic up to measure zero. 
	
	\begin{proposition}[{\cite[Proposition~4.5]{fritz2021definetti}}]
		Let $p$ be an exchangeable measure on $X^\N$.
		A morphism $f:(X,p)\to (Y,q)$ is a.s.~invariant under permutations if and only if it is a.s.~invariant under shifts.
	\end{proposition}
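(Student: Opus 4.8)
The plan is to prove the two implications separately, first translating a.s.\ invariance of a kernel into measurability of its entries. Recall that by \Cref{usingcauchy} (applicable since $\cat{Stoch}$ has the Cauchy--Schwarz property) together with \Cref{detinvthm}, a morphism $f:(X^\N,p)\to(Y,q)$ is $p$-a.s.\ invariant for a given dynamics if and only if, for each measurable $B\subseteq Y$, the function $f(B\mid-)$ is measurable up to $p$-null sets for the corresponding invariant $\sigma$-algebra. Writing $\mathcal{I}$ for the a.s.\ shift-invariant $\sigma$-algebra and $\mathcal{E}$ for the a.s.\ permutation-invariant one (both monoids being countable, so these $\sigma$-algebras are well behaved), the proposition is equivalent to the equality $\mathcal{I}=\mathcal{E}$ modulo $p$-null sets. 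One inclusion is elementary; the other I would obtain from de Finetti's theorem (\Cref{asdefinetti}) together with the dagger structure.

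For the implication ``shift-invariant $\Rightarrow$ permutation-invariant'' I would show $\mathcal{I}\subseteq\mathcal{E}$ up to null sets. If $A$ is a.s.\ shift-invariant, then since $p$ is exchangeable it is in particular shift-invariant ($\sigma\circ p=p$, as computed just before the statement), so a telescoping estimate gives $p(A\,\triangle\,\sigma^{-n}A)=0$ for every $n$; hence $A$ agrees up to a null set with a set $A_n\in\sigma(X_n,X_{n+1},\dots)$, i.e.\ $A$ lies in the tail $\sigma$-algebra modulo null sets. A finite permutation $\pi$ fixes all but finitely many coordinates, so choosing $n$ beyond its support gives $\pi^{-1}A_n=A_n$ strictly, and since $\pi$ preserves $p$ (exchangeability) we get $\pi^{-1}A=_p A_n=_p A$. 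Thus $A$ is a.s.\ permutation-invariant, so every a.s.\ shift-invariant $f$ is a.s.\ permutation-invariant.

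For the converse, ``permutation-invariant $\Rightarrow$ shift-invariant'', I would bypass the measure-theoretic Hewitt--Savage argument and reason categorically. By \Cref{asdefinetti} the permutation-colimit cocone $r:(X^\N,p)\to(PX,\mu)$ has Bayesian inverse $r^\dag\aseq\samp^\N\circ\cop_\N$, and this $r^\dag$ is shift-(left-)invariant: indeed $\sigma\circ\samp^\N=\samp^\N\circ\sigma_{PX}$ by naturality of the Kolmogorov product with respect to the shift, and $\sigma_{PX}\circ\cop_\N=\cop_\N$ because the copy map produces a constant (hence shift-fixed) sequence, so $\sigma\circ r^\dag=r^\dag$. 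Taking Bayesian inverses of $\sigma\circ r^\dag\aseq r^\dag$ and using that the dagger is an involutive contravariant functor yields $r\circ\sigma^\dag\aseq r$, i.e.\ $r$ is right-invariant for the time-reversed shift $D^\dag$. By \Cref{right inv preserves backwards} (whose hypotheses hold for $\cat{BorelStoch}$) this is equivalent to right-invariance for the forward shift, so $r\circ\sigma\aseq r$. Finally, any permutation-invariant $f$ factors as $f\aseq\tilde f\circ r$ through the colimit, whence $f\circ\sigma\aseq\tilde f\circ r\circ\sigma\aseq\tilde f\circ r\aseq f$, so $f$ is shift-invariant.

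The main obstacle is this second implication: the containment $\mathcal{E}\subseteq\mathcal{I}$ modulo null sets is the genuinely nontrivial, Hewitt--Savage-type content, and the key idea is that it comes essentially for free from de Finetti once one notices that $r$ inherits shift-invariance through its Bayesian inverse and then transports that invariance across the dagger via \Cref{right inv preserves backwards}. The remaining points to check are routine: that $\sigma$ preserves $p$ (so that the a.s.\ equalities above are between measure-preserving morphisms and Bayesian inverses are unique, by \Cref{dag id} and the uniqueness of Bayesian inverses), and the elementary telescoping estimate underlying the first implication.
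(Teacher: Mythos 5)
Your proof is correct, but there is no in-paper proof to compare it against: the paper imports this statement by citation from \cite[Proposition~4.5]{fritz2021definetti}, so you have supplied an actual argument where the text has only a reference, and the natural comparison is with that cited source. Your ``shift $\Rightarrow$ permutation'' direction is the classical measure-theoretic one (telescoping, plus the fact that $\sigma^{-n}A$ depends only on the coordinates $x_n,x_{n+1},\dots$ and is therefore \emph{strictly} fixed by any permutation supported in $\{0,\dots,n-1\}$); this is sound, but it can be done with no sets at all from the strict identity $\sigma^n\circ\pi=\sigma^n$ for such a permutation $\pi$: then $f\circ\pi \aseq f\circ\sigma^n\circ\pi = f\circ\sigma^n \aseq f$, using only well-definedness of composition in $\cat{PS(Borel)}$ and that $\pi$, $\sigma$ preserve $p$. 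That is essentially how the cited source argues, and it removes your detour through \Cref{usingcauchy} and \Cref{detinvthm}. Your ``permutation $\Rightarrow$ shift'' direction, the Hewitt--Savage-type content, is valid and, importantly, non-circular within this paper's development: \Cref{asdefinetti} rests only on \Cref{definettithm} and \Cref{samelimit}, and \Cref{right inv preserves backwards} only on the results of \Cref{limits}, none of which use the present proposition; moreover the identification $r^\dag\aseq\samp^\N\circ\cop_\N$ and the two naturality facts you invoke ($\sigma\circ\samp^\N=\samp^\N\circ\sigma_{PX}$ and $\sigma_{PX}\circ\cop_\N=\cop_\N$, with $\sigma_{PX}$ the shift on $(PX)^\N$) are exactly what the paper states and uses just before the proposition. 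Two points to make explicit in a write-up: $\sigma$ must be shown measure-preserving \emph{before} $\sigma^\dag$ can be formed (you note this), and the final chain $f\circ\sigma\aseq\tilde f\circ r\circ\sigma\aseq\tilde f\circ r\aseq f$ silently uses that a.s.\ equalities may be pre- and post-composed, i.e.\ well-definedness of composition on a.s.\ classes in $\cat{PS(Borel)}$.
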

	
	Explicitly, here is what this means in terms of $\sigma$-algebras:
	
	\begin{corollary}
		Let $p$ be an exchangeable measure on $X^\N$.
		Denote by $X^\N_\inv$ the set $X^\N$ equipped with the $p$-a.s.\ permutation-invariant $\sigma$-algebra, and by $X^\N_\sinv$ the set $X^\N$ equipped with the $p$-a.s.\ shift-invariant $\sigma$-algebra (and do the same for measures). Then $(X^\N_\inv,p_\inv)$ and $(X^\N_\sinv,p_\sinv)$ are isomorphic in $\cat{PS(Borel)}$.
	\end{corollary}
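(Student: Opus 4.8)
The plan is to exploit the fact that both $(X^\N_\inv,p_\inv)$ and $(X^\N_\tail,p_\tail)$ are invariant objects---hence, by \Cref{xinvexists}, colimits in $\cat{PS(Borel)}$---for two dynamical systems sharing the \emph{same} underlying object $(X^\N,p)$: the permutation action $D_{\mathrm{perm}}:S_\infty\to\cat{PS(Borel)}$ and the shift action $D_{\mathrm{shift}}:\N\to\cat{PS(Borel)}$. The key observation is that the cited \cite[Proposition~4.5]{fritz2021definetti} identifies the two classes of right-invariant morphisms out of $(X^\N,p)$: a morphism is $p$-a.s.\ permutation-invariant if and only if it is $p$-a.s.\ shift-invariant. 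Since an invariant object is characterized purely by its universal property with respect to right-invariant morphisms, two dynamical systems whose invariant-morphism classes coincide must have isomorphic invariant objects.

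Concretely, first I would name the colimiting cocones $r_\inv:(X^\N,p)\to(X^\N_\inv,p_\inv)$ and $r_\tail:(X^\N,p)\to(X^\N_\tail,p_\tail)$, noting that each is itself a right-invariant morphism for its own system, since the cocone condition $r\circ m\aseq r$ is built into \Cref{defxinv}. By the cited proposition, $r_\inv$, being permutation-invariant, is also shift-invariant; so by the universal property of $(X^\N_\tail,p_\tail)$ as the colimit of $D_{\mathrm{shift}}$ there is a unique $\phi:(X^\N_\tail,p_\tail)\to(X^\N_\inv,p_\inv)$ with $r_\inv\aseq\phi\circ r_\tail$. Symmetrically, $r_\tail$ is shift-invariant, hence permutation-invariant, so the universal property of $(X^\N_\inv,p_\inv)$ yields a unique $\psi:(X^\N_\inv,p_\inv)\to(X^\N_\tail,p_\tail)$ with $r_\tail\aseq\psi\circ r_\inv$.

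To conclude, I would show that $\phi$ and $\psi$ are mutually inverse by the standard uniqueness argument. Composing the two factorizations gives $\psi\circ\phi\circ r_\tail\aseq\psi\circ r_\inv\aseq r_\tail$; since $\id_{X^\N_\tail}$ also satisfies $\id\circ r_\tail\aseq r_\tail$, the uniqueness clause in the colimit's universal property forces $\psi\circ\phi\aseq\id_{X^\N_\tail}$. The symmetric computation gives $\phi\circ\psi\aseq\id_{X^\N_\inv}$, so $\phi$ is an isomorphism of $\cat{PS(Borel)}$ and the two objects are isomorphic.

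I do not expect a serious obstacle: the statement is a formal consequence of the universal property of colimits together with the already-established equivalence of the two invariance notions. The only points requiring care are (i) verifying that each colimiting cocone is a right-invariant morphism for the \emph{other} system, which is precisely where \cite[Proposition~4.5]{fritz2021definetti} enters, and (ii) keeping everything up to $p$-almost-sure equality, so that ``uniqueness'' is uniqueness in $\cat{PS(Borel)}$. Since both $S_\infty$ and $\N$ are countable, the per-generator and joint notions of invariance coincide, and the universal properties apply verbatim.
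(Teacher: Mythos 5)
Your proposal is correct and follows essentially the same route as the paper: the paper also takes the two colimiting cocones, uses \cite[Proposition~4.5]{fritz2021definetti} to see that each is right-invariant for the other dynamics, factors each through the other invariant object via the colimit universal property, and concludes that the two factorizations are mutually inverse by uniqueness. The only cosmetic difference is notation (the paper writes $r,s,\tilde r,\tilde s$ for your $r_\inv,r_\tail,\phi,\psi$) and that the paper additionally records that the comparison maps are a.s.\ deterministic, which is not needed for the isomorphism itself.
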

	\begin{proof}
		Denote by $r:(X^\N,p)\to (X^\N_\inv,p_\inv)$ the colimit cone for permutations, and by $s:(X^\N,p)\to (X^\N_\sinv,p_\sinv)$ the colimit cone for shifts. By the proposition above, $r$ is also a.s.\ shift-invariant, and so there exists a unique a.s.\ deterministic morphism $\tilde{r}:(X^\N_\sinv,p_\sinv)\to (X^\N_\inv,p_\inv)$ such that $r\aseq \tilde{r}\circ s$. Similarly, $s$ is also a.s.\ permutation-invariant, and so there exists a unique a.s.\ deterministic morphism $\tilde{s}:(X^\N_\inv,p_\inv)\to (X^\N_\sinv,p_\sinv)$ such that $s\aseq\tilde{s}\circ r$.
		By uniqueness, $\tilde{r}$ and $\tilde{s}$ are inverse to each other.
	\end{proof}
	
	\begin{corollary}
		The cone $(PX,\mu)$ as constructed in \Cref{asdefinetti}, together with $\samp^\N\circ\cop_\N:PX\to X^\N$, is the limit of $(X^\N,p)$ also under \emph{shifts}, not just permutations.
	\end{corollary}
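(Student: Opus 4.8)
The plan is to string together three facts we already have and then reconcile the cones. De Finetti (\Cref{asdefinetti}) identifies $(PX,\mu)$ with the permutation-invariant object $(X^\N_\inv,p_\inv)$; the preceding corollary identifies $(X^\N_\inv,p_\inv)$ with the tail object $(X^\N_\tail,p_\tail)$; and \Cref{xinvlimit} tells us that $(X^\N_\tail,p_\tail)$ is precisely the limit of the shift system. The only genuine work is to check that under these identifications the limiting cone is exactly $\samp^\N\circ\cop_\N$, and this reduces to the fact that every isomorphism of $\cat{PS(Borel)}$ is a dagger isomorphism.

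Concretely, I would first recall from \Cref{asdefinetti} and the discussion following it that $(PX,\mu)\cong(X^\N_\inv,p_\inv)$ and that, under this identification, the cone $\samp^\N\circ\cop_\N$ is the Bayesian inverse $r^\dag$ of the permutation colimit cocone $r:(X^\N,p)\to(X^\N_\inv,p_\inv)$. Writing $s:(X^\N,p)\to(X^\N_\tail,p_\tail)$ for the shift colimit cocone, \Cref{xinvlimit} then gives that $(X^\N_\tail,p_\tail)$ is the limit of the shift system with limiting cone $s^\dag$. That $\samp^\N\circ\cop_\N$ is a shift-cone at all, i.e.\ $\sigma\circ\samp^\N\circ\cop_\N\aseq\samp^\N\circ\cop_\N$, follows from $\sigma\circ\cop_\N=\cop_\N$ and naturality of $\samp^\N$, exactly as in the shift-invariance computation for $p$ at the start of this section.

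Next I would invoke the preceding corollary, which produces mutually inverse a.s.\ deterministic isomorphisms $\tilde{s}:(X^\N_\inv,p_\inv)\to(X^\N_\tail,p_\tail)$ and $\tilde{r}:(X^\N_\tail,p_\tail)\to(X^\N_\inv,p_\inv)$ satisfying $s\aseq\tilde{s}\circ r$ and $r\aseq\tilde{r}\circ s$. By \Cref{iso bayesian} these isomorphisms are dagger isomorphisms, so $\tilde{r}^\dag\aseq\tilde{s}$; taking Bayesian inverses of $r\aseq\tilde{r}\circ s$ therefore yields the cone-matching identity $r^\dag\aseq s^\dag\circ\tilde{s}$. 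This says that the permutation limit cone $\samp^\N\circ\cop_\N=r^\dag$ is nothing but the shift limit cone $s^\dag$ precomposed with the isomorphism $\tilde{s}$.

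Finally I would use the elementary fact that a limit is transported along any isomorphism of its apex: since $(X^\N_\tail,p_\tail)$ with cone $s^\dag$ is the shift-limit and $\tilde{s}$ is an isomorphism, the object $(X^\N_\inv,p_\inv)\cong(PX,\mu)$ equipped with the cone $s^\dag\circ\tilde{s}\aseq r^\dag=\samp^\N\circ\cop_\N$ is again a shift-limit, which is exactly the claim. I expect the main obstacle to be precisely the cone bookkeeping in the penultimate step: making sure the dagger of the transition isomorphism really is its inverse, so that the two a priori different limit cones genuinely coincide up to a.s.\ equality, rather than merely exhibiting an abstract isomorphism of limit apices.
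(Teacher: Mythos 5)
Your proof is correct and takes essentially the same route as the paper's: the paper's proof is the one-line chain $(PX,\mu)\cong(X^\N_\inv,p_\inv)\cong(X^\N_\tail,p_\tail)$ (from \Cref{asdefinetti} and the preceding corollary), concluding that $(PX,\mu)$ is an invariant object for shifts and hence, by \Cref{xinvlimit}, both a colimit and a limit. Your extra step — verifying the cone-matching identity $r^\dag \aseq s^\dag\circ\tilde{s}$ via \Cref{iso bayesian} and dagger functoriality — makes explicit the compatibility of the isomorphisms with the cones that the paper leaves implicit, but it is bookkeeping within the same argument rather than a different approach.
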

	Note that this depends crucially on the fact that we chose $p$ to be exchangeable. Not all shift-invariant measures are exchangeable, and so they do not all give $PX$ as a limit.
	\begin{proof}
		In $\cat{PS(Borel)}$, we have $(PX,\mu) \cong (X_\inv,p_\inv) \cong (X_\sinv,p_\sinv)$, therefore $(PX,\mu)$ is also an invariant object for shifts. (Hence, in particular, a colimit and a limit.)
	\end{proof}
	
	We get an analogue of \Cref{hs}:
	
	\begin{theorem}\label{bsh}
		Let $p$ be an exchangeable measure on $X^\N$.
		The following conditions are equivalent.
		\begin{enumerate}
			\item
			The measure $p$ is ergodic under shifts;
			\item
			The corresponding measure $p_\inv$ on $X_\sinv$ is deterministic;
			\item
			For every $p$-almost surely deterministic, $p$-almost surely shift-invariant morphism $s:X^\N\to Y$, the measure $s\circ p$ is deterministic.
			\item
			The corresponding measure $\mu$ on $PX$ is a Dirac delta $\delta_q$ at some $q\in PX$;
			\item
			The measure $p$ displays independence of all the $X$ in $X^\N$, i.e.~it is an infinite product measure (using Kolmogorov's extension theorem, all its finite marginals are product measures).
		\end{enumerate}
	\end{theorem}
	
	The proof is completely analogous to the one of \Cref{hs}.
	
	When any of the conditions above is satisfied (and so all of them are), the shifts $\sigma:(X^\N,p)\to(X^\N,p)$ are sometimes called \emph{Bernoulli shifts}, and they are known to be an ergodic dynamical system.

	\appendix

	\section{A short review of Markov categories}\label{appendix}
	
	Here we review the main notions of the theory of Markov categories used in the rest of this work.
	For further explanations, see \cite[Sections~2.2 and 2.3]{moss2022ergodic}, as well as the original sources \cite{chojacobs2019strings,fritz2019synthetic}.
	Intuitively, a Markov category is a monoidal category where the morphisms can be considered stochastic map of some kind. The precise definition will be given shortly.
	We use the \emph{string diagram} notation for monoidal categories, where morphisms are oriented from bottom to top, as follows:
	\ctikzfig{morphism}
	
	If we have two morphisms $k : X \to Y$ and $h : Z \to W$, we can represent the tensor morphism $k \otimes h : W \otimes Z \to Y \otimes W$ by the following string diagram:
	\ctikzfig{independence}
	We can interpret this product in terms of probability by considering the transitions given by $k$ and $h$ to be independent (hence why the right-hand side diagram is disconnected).
	
	\begin{definition}
		A Markov category is a symmetric monoidal category $(\cat{C}, \otimes, I)$ where:
		\begin{itemize}
			\item Each object $X \in \cat{C}$ is equipped with two maps $\cop_X : X \to X \otimes X$ and $\del_X : X \to I$. They are represented by the following string diagrams:
			\ctikzfig{copy_discard}
			and equip $X$ with the structure of a commutative comonoid, i.e.~they satisfy the following properties:
			\ctikzfig{properties}
			\item $\mathcal{C}$ is semi-cartesian, i.e.\ the monoidal unit $I$ is terminal.
		\end{itemize}
	\end{definition}

	Here are the main examples of Markov categories which we consider for categorical probability.
	
	\begin{example}
		The category $\cat{FinStoch}$ describes finite sets and stochastic maps between them. It is defined as follows:
		\begin{itemize}
			\item Objects are finite sets, that can be interpreted as sets of states;
			\item Given two finite sets $X$ and $Y$, a morphism $k : X \to Y$ is a \emph{stochastic matrix}, i.e.\ a map $X \times Y \to [0, 1]$, whose entries we denote as $k(y|x)$, such that $\sum_{y \in Y} k(y|x) = 1$. We can interpret $k(y|x)$ as the transition probability from state $x$ to state $y$;
			\item The identity morphisms are identity matrices, and the composition of two morphisms $k : X \to Y$, $h: Y \to Z$ is given by the Chapman-Kolmogorov formula:
			$$
			h \circ k(z|x) = \sum_{y \in Y} h(z|y) k(y|x) ,
			$$
			which says that sequential transitions are independent, as in a Markov chain;
			\item The monoidal unit is the one-point set $1=\{u\}$;
			\item The tensor product is given, on objects, by the cartesian product of the sets of states, and on morphisms, by the product of the transition probabilities: given $k:X\to Y$ and $h:Z\to W$, 
			$$
			(k\otimes h) (y,w|x,z) \coloneqq k(y|x)\,k(w|z) ;
			$$
			\item The copy and discard maps are given as follows:
			$$
			\cop(x'',x'|x) = \begin{cases}
				1 & x=x'=x'' ; \\
				0 & \mbox{otherwise} ;
			\end{cases}
			\qquad 
			\del(u|x) \coloneqq 1 .
			$$
		\end{itemize}
	\end{example}

	\begin{example}
		The category $\cat{Stoch}$ describes measurable spaces and measurable stochastic maps (Markov kernels) between them. It is defined as follows:
		\begin{itemize}
			\item Objects are measurables spaces, that can again be interpreted as sets of states;
			\item Given two measurable spaces $(X, \Sigma_X)$ and $(Y, \Sigma_Y)$, a morphism $k : X \to Y$ is a \emph{Markov kernel}. Explicitly, it is a map $X \times \Sigma_Y \to [0, 1]$, which we denote by $k(B|x)$, such that $k(- | x):\Sigma_Y\to[0,1]$ is a probability measure and that $k(B| -):X\to [0,1]$ is a measurable function. We can interpret $k(y|x)$ as the probability to arrive in $B$ if the current state is $x$.
			\item The composition of Markov kernels $k : X \to Y$, $h: Y \to Z$ is again given by the Chapman-Kolmogorov formula, this time with an integral instead of a sum:
			$$
			h \circ k(B|x) = \int_Y h(B|y) \, k(dy|x)
			$$
			\item The monoidal structure and the copy and discard maps are defined similarly to $\cat{FinStoch}$.
		\end{itemize}
		
		We call $\cat{BorelStoch}$ the full subcategory of $\cat{Stoch}$ whose objects are standard Borel spaces.
	\end{example}

	In a Markov category, we call a \emph{state} on the object $X$ a morphism $p:I\to X$, from the monoidal unit. In string diagrams, we denote it as follows.
	\ctikzfig{distribution}
	In $\cat{FinStoch}$, $\cat{BorelStoch}$ and $\cat{Stoch}$, states are exactly the probability distributions, which one can interpret as ``random states''. 
	
	One of the most important notions in the theory is that of \emph{determinism}.
	\begin{definition}
		A morphism $f:X\to Y$ in a Markov category is said to be \emph{deterministic} if the following equation holds.
		\ctikzfig{determinism}
	\end{definition}
	
	They formalise the following intuitive ideas of determinism:
	\begin{itemize}
		\item In $\cat{FinStoch}$, deterministic morphisms are precisely the stochastic matrices $f:X\to Y$ such that for all $x$ and $y$, $f(y|x)$ is zero or one. 
		This can always be written as $f(y|x) = \delta_{y,g(x)}$ for some function $g:X\to Y$.
		
		\item In $\cat{Stoch}$, deterministic morphisms are precisely those Markov kernels $f:X\to Y$ such that for all $x\in X$ and all measurable $B\subseteq Y$, $f(B|x)$ is zero or one. 
		If $Y$ is standard Borel, such kernels can always be written as 
		\begin{equation}\label{delta}
			f(B|x) = \delta_{g(x)}(B) = 1_B(g(x)) = \begin{cases}
				1 & g(x) \in B ; \\
				0 & g(x) \notin B 
			\end{cases}
		\end{equation}
		for some measurable function $g:X\to Y$ (where the choice of $g$ does not depend on $x$ and $B$).
		Even when $Y$ is not standard Borel, there exists a measurable space $DY$, (deterministically) isomorphic to $Y$ in $\cat{Stoch}$, such that every deterministic morphism $f:X\to DY$ can be written in the form \eqref{delta} for a unique measurable function $g:X\to DY$. (For more on this, see \cite[Section~5]{moss2022probability}.)
	\end{itemize}
	It can be easily checked that deterministic morphisms are closed under composition. 
	
	\subsection{Almost-sure equality and conditionals}\label{as}
	
	\begin{definition}\label{defas}
		In a Markov category, let $p$ be a state on $X$, and let $f,g:X\to Y$ be morphisms. We say that $f$ and $g$ are \emph{$p$-almost surely equal} (or \emph{$p$-a.s.\ equal}), and write $f\aseq_p g$ (or $f\aseq g$ when $p$ is clear) if the following equality holds.
		\ctikzfig{ase-states}
	\end{definition}
	
	A particularly important condition for this work is \emph{almost sure determinism}:
	
	\begin{definition}
		In a Markov category, let $p$ be a state on $X$. A morphism $f:X\to Y$ is said to be \emph{$p$-a.s.\ deterministic} if the following equation holds.
		\ctikzfig{asdet}
	\end{definition}
	If $f$ is a.s.\ equal to a deterministic morphism, then it is a.s.\ deterministic. The converse is true in $\cat{BorelStoch}$. 
	
	Almost surely deterministic morphisms are, in a certain sense, closed under composition:
	\begin{proposition}\label{a-s det composition}
		Let $f : X \to Y$, $g : Y \to Z$ be two morphisms of a causal Markov category $\cat{C}$, and $p$ be a state on $X$. Assume $f$ is $p$-a.s.~deterministic and $g$ is $(f \circ p)$-a.s.~deterministic. Then $g \circ f$ is $p$-a.s.~deterministic.
	\end{proposition}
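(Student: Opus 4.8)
The plan is to verify the defining a.s.-determinism equation for $g\circ f$ directly, reducing it by diagram rewrites to the two hypotheses together with one application of relative positivity. Recall (from \Cref{defas} and the a.s.-determinism definition following it) that $g\circ f$ being $p$-a.s.\ deterministic means, as states on $X\otimes Z\otimes Z$,
\[
(\id_X\otimes \cop_Z\circ (g\circ f))\circ\cop_X\circ p \;=\; (\id_X\otimes (g\circ f\otimes g\circ f)\circ\cop_X)\circ\cop_X\circ p,
\]
i.e.\ copying the output of $g\circ f$ agrees, with the input retained as a witness, with applying $g\circ f$ to two independent copies of the input.

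First I would start from the right-hand ``doubled'' side and split off the two copies of $g$, rewriting $(g\circ f\otimes g\circ f)\circ\cop_X$ as $(g\otimes g)\circ(f\otimes f)\circ\cop_X$. The inner part $(\id_X\otimes (f\otimes f)\circ\cop_X)\circ\cop_X\circ p$ is exactly the doubled side of the a.s.-determinism equation for $f$; since $f$ is $p$-a.s.\ deterministic, it may be replaced by $(\id_X\otimes \cop_Y\circ f)\circ\cop_X\circ p$, i.e.\ by a single application of $f$ whose output is then copied. Postcomposing with $\id_X\otimes(g\otimes g)$ turns the doubled side of the goal into the diagram ``retain the input $x$, apply $f$ once, copy $f(x)$, and apply $g$ to each copy''.

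It then remains to merge the two copies of $g$, i.e.\ to pass from ``copy $f(x)$ then apply $g$ twice'' to ``apply $g\circ f$ once then copy'', which is precisely the content of $g$ being $(f\circ p)$-a.s.\ deterministic. The one genuine subtlety --- and what I expect to be the main obstacle --- is that the a.s.-determinism hypothesis for $g$ is witnessed by retaining a copy of its input in $Y$, whereas in the diagram above the retained witness lives in $X$ and is tied to the $g$-inputs only through the a.s.-deterministic map $f$. Bridging this is the step that forces $\cat{C}$ to be causal: using relative positivity (exactly as it is invoked in the proof of \Cref{dag id}), together with the a.s.-determinism of $f$, one shows that retaining $x$ is as informative a witness as retaining $f(x)$, so that the equation defining $(f\circ p)$-a.s.\ determinism of $g$ can legitimately be applied underneath the retained $X$-wire. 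Once this transport is justified, the right-hand side collapses to $(\id_X\otimes \cop_Z\circ (g\circ f))\circ\cop_X\circ p$, which is the left-hand side of the goal. Throughout I would also use coassociativity and commutativity of $\cop$ to re-associate copies, and the fact (used in \Cref{dag id}) that a.s.\ determinism depends only on the $p$-a.s.\ class of a morphism, so that all manipulations stay within $\cat{PS(C)}$-equivalence classes.
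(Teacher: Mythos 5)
Your reduction matches the paper's proof read in the opposite direction: split the doubled side as $(g\otimes g)\circ(f\otimes f)\circ\cop_X$, absorb $(f\otimes f)\circ\cop_X$ into $\cop_Y\circ f$ using $p$-a.s.\ determinism of $f$, then merge the two copies of $g$; and you correctly isolate the crux, namely that $g$'s a.s.-determinism equation is witnessed on $Y$ while your diagram retains its witness on $X$. The gap is the tool you propose for bridging this. Relative positivity (\Cref{causpos}) has as its \emph{hypothesis} that some composite is already $p$-a.s.\ deterministic; in the proof of \Cref{dag id} it is applicable only because $f\circ f^\dag \aseq_q \id_Y$ is an assumption of that implication, so the composite of the pair $(f^\dag, f)$ is known to be a.s.\ deterministic before positivity is invoked. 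Here the only composites occurring in your diagram to which one could apply relative positivity are $g\circ f$, $\cop_Z\circ g\circ f$, or $(g\otimes g)\circ\cop_Y\circ f$, and their a.s.\ determinism is precisely (equivalent to) the statement being proved, so the invocation is circular. Your phrase ``retaining $x$ is as informative a witness as retaining $f(x)$'' is exactly the claim that needs proof, and it is not something relative positivity together with a.s.\ determinism of $f$ delivers: what is needed is the implication that $h_1 \aseq_{f\circ p} h_2$ (witnessed on $Y$) forces the joints $(\id_X\otimes (h_i\circ f))\circ\cop_X\circ p$ to agree (witnessed on $X$), i.e.\ that a.s.\ equality can be re-witnessed one step earlier in time.

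That implication is the causality axiom itself, and this is how the paper argues: apply causality to the chain $p$, then $f$, with the two continuations $h_1=\cop_Z\circ g$ and $h_2=(g\otimes g)\circ\cop_Y$; the hypothesis of causality is literally the $(f\circ p)$-a.s.\ determinism of $g$, and its conclusion is that equality with the $X$-wire retained. This transport of almost-sure equality backwards along a composition is the same mechanism that makes composition in $\cat{PS(C)}$ well defined in the first place (\cite[Proposition~13.9]{fritz2019synthetic}); note that the paper only establishes causal $\Rightarrow$ relatively positive, not the converse, so the weaker axiom cannot be expected to do causality's job. One further small point your sketch omits: the causality consequence retains the intermediate $Y$-wire as well as the $X$-wire, so after applying it one must still marginalize out that $Y$-output and use $p$-a.s.\ determinism of $f$ once more to land on the two sides of the a.s.-determinism equation for $g\circ f$; this is the concluding step of the paper's proof. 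With the crucial step re-justified by causality (and this final marginalization added), your argument becomes exactly the paper's.
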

	
	\begin{proof}
		The hypotheses give us the following string diagrams equality:
		\ctikzfig{det_eq_strengthening}
		where the first equality holds because of $[f \circ p]$-a.s.~determinism of $g$ together with the causality of $\cat{C}$, and the second one holds because $f$ is $p$-a.s.~deterministic. But marginalizing over the second output and using $p$-a.s.~determinism of $f$ again, we obtain:
		\ctikzfig{gf_as_det}
		and so $g \circ f$ is $p$-a.s.~deterministic.
	\end{proof}
	
	Let us now turn to conditionals. 
	
	\begin{definition}
		A Markov category $\cat{C}$ has \emph{conditional distributions} if for every distribution $\psi : I \to X \otimes Y$, there exists a morphism $\psi_{|X} : X \to Y$ that satisfies:
		\ctikzfig{conditional_distributions}
		
		It has \emph{conditionals} if for every morphism $f : A \to X \otimes Y$, there exists a morphism $f_{|X} : X \to Y$ that satisfies:
		\ctikzfig{conditionals}
	\end{definition}
	
	The categories $\cat{FinStoch}$ and $\cat{BorelStoch}$ have conditionals, and they corresponds to the usual \emph{regular conditional distributions}. See \cite[Section~3]{chojacobs2019strings} and \cite[Section~11]{fritz2019synthetic} for more on this.
	
	A particular conditional distribution is given by the Bayesian inverse of a morphism.
	\begin{definition}
		In a Markov category, let $p$ be a state on $X$, and let $f:X\to Y$ be a morphism.
		A \emph{Bayesian inverse} of $f$ relative to $p$ is a morphism $f^\dag_p:Y\to X$ (or simply $f^\dag$, when $p$ is clear), such that
		\ctikzfig{bayinv}
	\end{definition}
	
	In $\cat{FinStoch}$ and $\cat{BorelStoch}$, this recovers the usual notions of Bayesian inverses. (See again \cite[Section~3]{chojacobs2019strings} and \cite[Section~11]{fritz2019synthetic}.)
	If a Bayesian inverse $f^\dag_p$ of $f$ exists, then any (other) morphism $g:Y\to X$ is a Bayesian inverse of $f$ if and only if it is $(f\circ p)$-a.s.\ equal to $f^\dag_p$. Similar things can be said about conditional distributions.

	\begin{definition}[{\cite[Definition~2.8]{moss2022ergodic}}]
		An object $X$ in a Markov category is said to have \emph{disintegrations} if for every state $p$ on $X$ and every deterministic morphism $f:X\to Y$, the Bayesian inverse $f^\dag_p$ exists.
	\end{definition}
	
	By Rokhlin's disintegration theorem, in the category $\cat{Stoch}$, every standard Borel space has disintegrations. (See also \cite[Section~2.3]{moss2022ergodic}.)

	\subsection{Causality and positivity}\label{causpos}
	
	Here are additional axioms on Markov categories. For more details, see \cite[Section~11]{fritz2019synthetic} and \cite{fritz2022dilations}.
	
	\begin{definition}
		A Markov category $\cat{C}$ is said to be \emph{causal} if, whenever the morphisms $f, g, h_1$ and $h_2$ satisfy:
		\ctikzfig{causality_cond}
		then they also satisfy:
		\ctikzfig{causality_csq}
	\end{definition}
	
	\begin{proposition}[{\cite[Proposition~11.34]{fritz2019synthetic}}]
		If a Markov category has conditionals, it is causal.
	\end{proposition}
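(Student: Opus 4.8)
The plan is to read both lines of the causality axiom as comparisons of joint distributions and then bridge them with a single conditional. Label the data $f\colon A\to X$, $g\colon X\to Y$, $h_1,h_2\colon Y\to Z$, thinking of $X$ as ``past'', $Y=g(X)$ as ``present'' and $Z$ as ``future''. The first line of the axiom is the equality of the $(Y,Z)$-joints,
\[
(\id_Y\otimes h_1)\circ\cop_Y\circ g\circ f \;=\; (\id_Y\otimes h_2)\circ\cop_Y\circ g\circ f ,
\]
that is, $h_1\aseq_{gf}h_2$; the second line is the equality of the $(X,Z)$-joints,
\[
(\id_X\otimes(h_1\circ g))\circ\cop_X\circ f \;=\; (\id_X\otimes(h_2\circ g))\circ\cop_X\circ f ,
\]
that is, $h_1\circ g\aseq_f h_2\circ g$. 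So the whole content is that almost-sure equality of the two ``futures'' relative to the present $Y$ can be \emph{pulled back along $g$} to the past $X$. This is the non-trivial direction: it is exactly what fails in a non-causal Markov category, and conditionals are precisely what make it go through.

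First I would form the joint of past and present, $\omega\coloneqq(\id_X\otimes g)\circ\cop_X\circ f\colon A\to X\otimes Y$, whose marginal onto $Y$ is $g\circ f$. Unfolding the interchange law, the two sides of the desired conclusion are exactly $(\id_X\otimes h_i)\circ\omega$, so it suffices to prove $(\id_X\otimes h_1)\circ\omega=(\id_X\otimes h_2)\circ\omega$.

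The single use of the hypothesis on $\cat{C}$ is to reconstruct the past from the present. Since $\cat{C}$ has conditionals, the morphism $\omega$ (after a symmetry, so that $Y$ is read first) admits a conditional $\omega_{|Y}\colon Y\to X$ with
\[
\omega \;=\; (\omega_{|Y}\otimes\id_Y)\circ\cop_Y\circ g\circ f .
\]
Substituting this and using the comonoid laws gives, for $i=1,2$,
\[
(\id_X\otimes h_i)\circ\omega
\;=\;(\omega_{|Y}\otimes h_i)\circ\cop_Y\circ g\circ f
\;=\;(\omega_{|Y}\otimes\id_Z)\circ(\id_Y\otimes h_i)\circ\cop_Y\circ g\circ f .
\]
The inner factor $(\id_Y\otimes h_i)\circ\cop_Y\circ g\circ f$ is precisely the two sides of the hypothesis, which coincide; post-composing the common value with $\omega_{|Y}\otimes\id_Z$ then yields $(\id_X\otimes h_1)\circ\omega=(\id_X\otimes h_2)\circ\omega$, which is the conclusion.

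I expect the only real difficulty to be diagrammatic bookkeeping rather than anything conceptual: one must condition on the correct leg (the stated definition of conditionals produces the second factor given the first, so obtaining $\omega_{|Y}\colon Y\to X$ requires inserting a braiding), and one must check that the rewrite ``apply $h_i$ to a leg of $\omega$'' equals ``apply $h_i$ to a fresh copy of $Y$'' directly from the defining equation of $\omega_{|Y}$ together with coassociativity and counitality. No positivity or further causality input is invoked; the argument uses solely the existence of the one conditional $\omega_{|Y}$, which is exactly why ``has conditionals $\Rightarrow$ causal'' holds at this level of generality.
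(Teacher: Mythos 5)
Your strategy is correct and it is in fact the proof of the reference that the paper cites for this statement (the paper itself gives no proof of this proposition, it only quotes \cite[Proposition~11.34]{fritz2019synthetic}): condition the past--present joint $\omega=(\id_X\otimes g)\circ\cop_X\circ f$ on $Y$, rewrite the conclusion diagram so that the hypothesis appears as an inner factor, and substitute equals for equals. However, one step is literally unjustified as written: the conditional of $\omega : A\to X\otimes Y$ given $Y$ is \emph{parametrized} by the input, i.e.\ it has type $\omega_{|Y} : Y\otimes A\to X$, not $Y\to X$, and a conditional independent of $A$ need not exist. (Concretely, in $\cat{BorelStoch}$ take $A=X=Y=\{0,1\}$, $f=\id$, $g$ the constant kernel at $0$, so $\omega(a)=\delta_a\otimes\delta_0$: the right-hand side of your displayed decomposition is then independent of $a$ while $\omega$ is not. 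The in-text typing ``$f_{|X}:X\to Y$'' in the paper's appendix, which you followed, is a typo; the defining string diagram feeds both the $X$ output and the original input $A$ into $f_{|X}$.) The repair is mechanical: copy $A$ and route one copy into $\omega_{|Y}$. Your substitution step survives unchanged, because the hypothesis, being an equality of morphisms $A\to Y\otimes Z$, remains an equality after tensoring with $\id_A$ and precomposing with $\cop_A$.

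A second, smaller point: the conclusion of the causality axiom retains all three wires, i.e.\ it asserts an equality of morphisms $A\to X\otimes Y\otimes Z$, not merely of the $(X,Z)$-joints you state. Your computation yields this stronger form if you simply keep the extra copy of $Y$ produced by $\cop_Y$ (coassociativity), or, alternatively, apply your two-output statement with $\cop_Y\circ g$ in place of $g$ and $\id_Y\otimes h_i$ in place of $h_i$. With these two adjustments your argument coincides with the proof in the cited source: the single use of conditionals is to reconstruct the past from the present, exactly as you describe.
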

	
	Since they have conditionals, $\cat{FinStoch}$ and $\cat{BorelStoch}$ are causal. 
	It turns out that $\cat{Stoch}$ is causal too, see \cite[Example~11.35]{fritz2019synthetic}.

	\begin{definition}
		A Markov category is said to be positive if, when $f : X \to Y$ and $g : Y \to Z$ are such that $gf$ is deterministic, then the following string diagram equality holds:
		\ctikzfig{positivity}
		
		It is said to be relatively positive if, when $p : A \to X$, $f : X \to Y$ and $g : Y \to Z$ are such that $gf$ is $p$-almost surely deterministic, then the following string diagram equality holds:
		\ctikzfig{relative_positivity}
	\end{definition}
	
	Note that a relatively positive Markov category is positive, for we can take $p$ to be the identity in the definition. 
	
	\begin{proposition}[{\cite[Corollary~2.28]{fritz2022dilations}}]
		A causal Markov category is relatively positive.
	\end{proposition}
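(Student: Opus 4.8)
The plan is to derive the relative-positivity equation directly from the causality axiom, instantiated so that its hypothesis is exactly the $p$-almost-sure determinism of $g\circ f$. Since relative positivity reduces to ordinary positivity when $p$ is taken to be an identity (as noted right after the definition), the extra work here is only to carry the state $p$ through the argument and to replace strict determinism by almost-sure determinism; the structural heart is the same causal-inference step that proves ``causal $\Rightarrow$ positive''.

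First I would fix the data $p\colon A\to X$, $f\colon X\to Y$, $g\colon Y\to Z$ with $g\circ f$ being $p$-almost surely deterministic, and absorb $p$ into the source so that everything becomes a pair of parallel morphisms out of $A$. Unfolding the definition of $p$-almost-sure determinism for $k=g\circ f$ gives an equality that records the coincidence of the two copies of the $Z$-output only after the \emph{whole} composite $g\circ f$ has been applied, with the input retained through a copy of $p$. Because $k=g\circ f$ factors through $Y$, I would rewrite this ``collapsed'' equality so that $f$ appears as the first leg and $g$ as the second, exposing the intermediate object $Y$; this puts the hypothesis into precisely the shape consumed by the causality axiom, with $f$ and $g$ playing their roles and the two sides of the comparison playing the roles of $h_1,h_2$.

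Next I would apply causality itself. The axiom upgrades an equality that is visible only ``downstream'' of the composite into the corresponding equality in which the intermediate variable $Y$ is retained; concretely, it lets me move the copy that produces the two $Z$'s from after $g$ to the $Y$-stage, so that the two copies of $Z$ are produced by applying $g$ to two copies of a single sample of $Y$ rather than by copying the output of $g\circ f$. After re-associating the copies and discarding the auxiliary retained wire, the resulting identity is exactly the relative-positivity equation, i.e.\ the $p$-precomposed form of the positivity diagram. Along the way I would use that almost-sure determinism is stable under the manipulations involved, in particular the composition behaviour recorded in \Cref{a-s det composition} and the fact that a morphism almost surely equal to a deterministic one is itself almost surely deterministic.

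The main obstacle is the diagrammatic bookkeeping at the interface with causality: I must check that the hypothesis actually supplied by $p$-almost-sure determinism of $g\circ f$ matches, wire for wire, the premise that the causality axiom requires (retaining the correct variable and carrying $p$ in the right place), and that the conclusion of causality, once the extra outputs are marginalised and reordered, is literally the relative-positivity diagram rather than merely an almost-sure consequence of it. Getting the placement of the copies, the retained $p$-wire, and the choice of $h_1,h_2$ exactly right, so that no strict equality is silently assumed where only an almost-sure one holds, is where the care is needed; everything else is routine rewriting using the comonoid laws.
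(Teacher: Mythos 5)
You should first be aware that the paper does not prove this proposition at all: it imports it verbatim from \cite[Corollary~2.28]{fritz2022dilations}, where the implication from causality to (relative) positivity is one of the main results, established by a substantial string-diagrammatic argument --- not by a single instantiation of the axiom. Your proposal, by contrast, claims that the $p$-almost-sure determinism of $g\circ f$ can be fed directly into the causality axiom ``with $f$ and $g$ playing their roles and the two sides of the comparison playing the roles of $h_1,h_2$'', and this is exactly where the proof breaks. The premise of causality requires two morphisms sharing the \emph{entire} prefix ``first leg, then second leg, then $\cop$'' and differing only in a final box $h_1$ vs.\ $h_2$ applied to the copied wire. The $p$-a.s.\ determinism equation,
\[
(\id_X \otimes (\cop_Z \circ g \circ f)) \circ \cop_X \circ p
\;=\;
(\id_X \otimes ((g f \otimes g f) \circ \cop_X)) \circ \cop_X \circ p ,
\]
is not of this shape with $f$ and $g$ as the legs: its shared prefix is $\cop_X \circ p$, and its two sides differ in the whole downstream wiring --- on the left a single application of $gf$ whose output is copied, on the right two applications of $gf$ to two copies of $X$. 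In particular the right-hand side does not factor as $h_2 \circ g f$ for any $h_2$, so it cannot play the role of an ``$h_2$ applied after the composite'' side. The only admissible reading takes causality's composite of the two legs to be $p$ itself, with $h_1=\cop_Z\circ gf$ and $h_2=(gf\otimes gf)\circ\cop_X$ as monolithic boxes; but then the conclusion of causality merely retains additional copies of $X$, and never exposes a $Y$-wire jointly with the $Z$-output, which is what the relative positivity equation asserts.

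Trying to match the conclusion instead is circular. For causality's conclusion to literally be the relative positivity equation, both of its sides would have to have the form $(\id_X \otimes ((\id_Y \otimes h_i)\circ\cop_Y\circ f))\circ\cop_X\circ p$; the left-hand side of relative positivity is of this form (with $h_1=g$), but the right-hand side, $(\id_X \otimes ((f \otimes gf)\circ\cop_X))\circ\cop_X\circ p$, is of this form for some $h_2$ precisely when the relative positivity equation already holds --- that is the statement being proven. Relatedly, your gloss of the axiom --- that it ``lets me move the copy that produces the two $Z$'s from after $g$ to the $Y$-stage, so that the two copies of $Z$ are produced by applying $g$ to two copies of a single sample of $Y$'' --- is not what causality licenses: causality only adds a retained copy of an intermediate wire; it never turns one application of $g$ into two independent ones. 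The equality you want it to deliver (copied $Z$-output equals two independent applications of $g$, almost surely) is essentially the assertion that $g$ is $(f\circ p)$-a.s.\ deterministic, which is the crux of positivity: it is true in $\cat{Stoch}$, but synthetically it is a consequence of the theorem, not a lemma available beforehand. This is why \cite{fritz2022dilations} needs its longer argument (the absolute case first, then the relative one), and why your reduction, as described, cannot be completed by ``routine rewriting using the comonoid laws''.
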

	
	Therefore $\cat{FinStoch}$, $\cat{BorelStoch}$ and $\cat{Stoch}$ are positive and relatively positive.
	In a positive Markov category and so in particular in these categories, every isomorphism is deterministic \cite[Remark~11.28]{fritz2019synthetic}. (An almost sure version of this statement is given by \Cref{iso bayesian}.)

	\bibliographystyle{alpha}
	\bibliography{markov}

\newcommand{\etalchar}[1]{$^{#1}$}
\begin{thebibliography}{FKM{\etalchar{+}}24}

\bibitem[AC04]{quantumprotocols}
Samson Abramsky and Bob Coecke.
\newblock A categorical semantics of quantum protocols.
\newblock In {\em 2004 19nd {A}nnual {ACM}/{IEEE} {S}ymposium on {L}ogic in
  {C}omputer {S}cience ({LICS})}. IEEE, [Piscataway], NJ, 2004.
\newblock
  \href{https://arxiv.org/abs/quant-ph/0402130}{arXiv:quant-ph/0402130}.

\bibitem[BD86]{cauchycompletion}
Francis Borceux and Dominique Dejean.
\newblock Cauchy completion in category theory.
\newblock {\em Cahiers de Topologie et Géométrie Différentielle
  Catégoriques}, 27(2):133--146, 1986.

\bibitem[Bla42]{blackwell1942idempotent}
David Blackwell.
\newblock Idempotent {M}arkoff chains.
\newblock {\em Ann. of Math. (2)}, 43:560--567, 1942.

\bibitem[Bog07]{BogachevVladimirI2007MT}
Vladimir~I Bogachev.
\newblock {\em Measure Theory}, volume~1.
\newblock Springer-Verlag, Berlin, Heidelberg, 1. aufl. edition, 2007.

\bibitem[CJ19]{chojacobs2019strings}
Kenta Cho and Bart Jacobs.
\newblock Disintegration and {B}ayesian inversion via string diagrams.
\newblock {\em Math. Structures Comput. Sci.}, 29:938--971, 2019.
\newblock \href{https://arxiv.org/abs/1709.00322}{arXiv:1709.00322}.

\bibitem[CP07]{quantumwithoutsums}
Bob Coecke and Dusko Pavlovic.
\newblock {\em Quantum measurements without sums}, pages 567--604.
\newblock 2007.

\bibitem[DDGS18]{dahlqvist2018borel}
Fredrik Dahlqvist, Vincent Danos, Ilias Garnier, and Alexandra Silva.
\newblock Borel kernels and their approximation, categorically.
\newblock In {\em MFPS 34: Proceedings of the Thirty-Fourth Conference on the
  Mathematical Foundations of Programming Semantics}, volume 341, pages
  91--119, 2018.

\bibitem[DMPS18]{markovchains}
Randal Douc, Eric Moulines, Pierre Priouret, and Philippe Soulier.
\newblock {\em Markov Chains}.
\newblock Springer, 2018.

\bibitem[FGL{\etalchar{+}}23]{supports}
Tobias Fritz, Tom{\'a}{\v{s}} Gonda, Antonio Lorenzin, Paolo Perrone, and Dario
  Stein.
\newblock Absolute continuity, supports and idempotent splitting in categorical
  probability, 2023.
\newblock \href{https://arxiv.org/abs/2308.00651}{arXiv:2308.00651}.

\bibitem[FGP21]{fritz2021definetti}
Tobias Fritz, Tom\'{a}\v{s} Gonda, and Paolo Perrone.
\newblock de {F}inetti's theorem in categorical probability.
\newblock {\em J. Stoch. Anal.}, 2(4), 2021.
\newblock \href{https://arxiv.org/abs/2105.02639}{arXiv:2105.02639}.

\bibitem[FGPR]{fritz2020representable}
Tobias Fritz, Tom{\'a}{\v{s}} Gonda, Paolo Perrone, and Eigil~Fjeldgren
  Rischel.
\newblock Representable {M}arkov categories and comparison of statistical
  experiments in categorical probability.
\newblock \href{https://arxiv.org/abs/2010.07416}{arxiv.org/abs/2010.07416}.

\bibitem[FGPS]{fritz2022dilations}
Tobias Fritz, Tom{\'a}{\v{s}} Gonda, Paolo Perrone, and Dario Stein.
\newblock Dilations and information flow axioms in categorical probability.
\newblock \href{https://arxiv.org/abs/2211.02507}{arXiv:2211.02507}.

\bibitem[FK23]{fritz2022dseparation}
Tobias Fritz and Andreas Klingler.
\newblock The $d$-separation criterion in categorical probability.
\newblock {\em J. Mach. Learn. Res.}, 24(46):1--49, 2023.
\newblock \href{https://arxiv.org/abs/2207.05740}{arXiv:2207.05740}.

\bibitem[FKM{\etalchar{+}}24]{hiddenmarkov}
Tobias Fritz, Andreas Klingler, Drew McNeely, Areeb Shah-Mohammed, and Yuwen
  Wang.
\newblock Hidden markov models and the bayes filter in categorical probability,
  2024.
\newblock \href{https://arxiv.org/abs/2401.14669}{arXiv:2401.14669}.

\bibitem[FP19]{fritz2019kantorovich}
Tobias Fritz and Paolo Perrone.
\newblock A probability monad as the colimit of spaces of finite samples.
\newblock {\em Theory and Applications of Categories}, 34(7):170--220, 2019.

\bibitem[FR20]{fritzrischel2019zeroone}
Tobias Fritz and Eigil~Fjeldgren Rischel.
\newblock Infinite products and zero-one laws in categorical probability.
\newblock {\em Compositionality}, 2:3, 2020.
\newblock
  \href{https://compositionality-journal.org/papers/compositionality-2-3/}{compositionality-journal.org/papers/compositionality-2-3}.

\bibitem[Fri]{fritz2021topos}
Tobias Fritz.
\newblock The law of large numbers in categorical probability.
\newblock \url{https://www.youtube.com/watch?v=Gvf3H4e7l8s}.

\bibitem[Fri20]{fritz2019synthetic}
Tobias Fritz.
\newblock A synthetic approach to {M}arkov kernels, conditional independence
  and theorems on sufficient statistics.
\newblock {\em Adv. Math.}, 370:107239, 2020.
\newblock \href{https://arxiv.org/abs/1908.07021}{arXiv:1908.07021}.

\bibitem[Gad96]{gadducci1996}
Fabio Gadducci.
\newblock {\em On The Algebraic Approach To Concurrent Term Rewriting}.
\newblock PhD thesis, 11 1996.

\bibitem[Gir82]{giry}
Michèle Giry.
\newblock A categorical approach to probability theory.
\newblock In {\em Categorical aspects of topology and analysis}, volume 915 of
  {\em Lecture Notes in Mathematics}. Springer, 1982.

\bibitem[HV19]{cqt}
Chris Heunen and Jamie Vicary.
\newblock {\em Categories for Quantum Theory}.
\newblock Oxford University Press, 2019.

\bibitem[Jac17]{jacobs-pmonads}
Bart Jacobs.
\newblock {From Probability Monads to Commutative Effectuses}.
\newblock {\em Journ. of Logical and Algebraic Methods in Programming}, 2017.
\newblock In press. Available at
  \href{http://www.cs.ru.nl/B.Jacobs/PAPERS/probability-monads.pdf}{http://www.cs.ru.nl/B.Jacobs/PAPERS/probability-monads.pdf}.

\bibitem[Jac21]{jacobs2021multinomial}
Bart Jacobs.
\newblock Multinomial and hypergeometric distributions in {M}arkov categories.
\newblock In {\em Proceedings of the {T}hirty-{S}eventh {C}onference on the
  {M}athematical {F}oundations of {P}rogramming {S}emantics ({MFPS}}, volume
  351 of {\em Electron. Notes Theor. Comput. Sci.}, pages 98--115, 2021.
\newblock \href{https://arxiv.org/abs/2112.14044}{arXiv:2112.14044}.

\bibitem[Jac23]{suffstat}
Bart Jacobs.
\newblock Sufficient statistics and split idempotents in discrete probability
  theory.
\newblock In {\em Proceedings of the {T}hirty-{E}ighth {C}onference on the
  {M}athematical {F}oundations of {P}rogramming {S}emantics ({MFPS}
  {XXXVIII})}, volume~1 of {\em Electron. Notes Theor. Comput. Sci.} Elsevier
  Sci. B. V., Amsterdam, 2023.
\newblock \href{https://arxiv.org/abs/2212.09191}{arXiv:2212.09191}.

\bibitem[JP89]{jones-plotkin}
C.~Jones and J.~D. Plotkin.
\newblock {A Probabilistic Powerdomain of Evaluations}.
\newblock {\em Proceedings of the Fourth Annual Symposium of Logics in Computer
  Science}, 1989.

\bibitem[Kar18]{wayofdagger}
Martti Karvonen.
\newblock {\em The Way of the Dagger}.
\newblock PhD thesis, University of Edinburgh, 2018.

\bibitem[Koc12]{kock2012distributions}
Anders Kock.
\newblock Commutative monads as a theory of distributions.
\newblock {\em Theory Appl. Categ.}, 26:No. 4, 97--131, 2012.
\newblock \href{https://arxiv.org/abs/1108.5952}{arXiv:1108.5952}.

\bibitem[Law]{lawvere1962}
F.~W. Lawvere.
\newblock The category of probabilistic mappings.
\newblock \url{https://ncatlab.org/nlab/files/lawvereprobability1962.pdf}.

\bibitem[MP22a]{moss2022ergodic}
Sean Moss and Paolo Perrone.
\newblock A category-theoretic proof of the ergodic decomposition theorem,
  2022.
\newblock \href{https://arxiv.org/abs/2207.07353}{arXiv:2207.07353}.

\bibitem[MP22b]{moss2022probability}
Sean Moss and Paolo Perrone.
\newblock Probability monads with submonads of deterministic states.
\newblock In {\em Proceedings of the 37th Annual ACM/IEEE Symposium on Logic in
  Computer Science}, pages 1--13, 2022.
\newblock \href{https://arxiv.org/abs/2204.07003}{arXiv:2204.07003}.

\bibitem[MT93]{stochstability}
Sean~P. Meyn and Richard~L. Tweedie.
\newblock {\em Markov Chains and Stochastic Stability}.
\newblock Springer, 1993.

\bibitem[Par20]{quantum-markov}
Arthur~J. Parzygnat.
\newblock Inverses, disintegrations, and bayesian inversion in quantum {M}arkov
  categories, 2020.
\newblock \href{https://arxiv.org/abs/2001.08375}{arXiv:2001.08375}.

\bibitem[Per18]{paolo-phdthesis}
Paolo Perrone.
\newblock {\em {Categorical Probability and Stochastic Dominance in Metric
  Spaces}}.
\newblock PhD thesis, University of Leipzig, 2018.
\newblock Submitted. Available at
  \href{http://personal-homepages.mis.mpg.de/perrone/phdthesis.pdf}{http://personal-homepages.mis.mpg.de/perrone/phdthesis.pdf}.

\bibitem[Per21]{lifting}
Paolo Perrone.
\newblock Lifting couplings in wasserstein spaces, 2021.
\newblock \href{https://arxiv.org/abs/2110.06591}{2110.06591}.

\bibitem[Sel07]{dagger-compact}
Peter Selinger.
\newblock Dagger compact closed categories and completely positive maps:
  (extended abstract).
\newblock {\em Electronic Notes in Theoretical Computer Science}, 170:139--163,
  2007.
\newblock Proceedings of the 3rd International Workshop on Quantum Programming
  Languages (QPL 2005).

\bibitem[Sel08]{daggeridempotents}
Peter Selinger.
\newblock Idempotents in dagger categories (extended abstract).
\newblock {\em Electronic Notes in Theoretical Computer Science}, 210:107--122,
  2008.

\bibitem[SS21]{stein2021conditioning}
Dario Stein and Sam Staton.
\newblock Compositional semantics for probabilistic programs with exact
  conditioning.
\newblock In {\em Logic in Computer Science}. IEEE, 2021.
\newblock \href{https://arxiv.org/abs/2101.11351}{arXiv:2101.11351}.

\bibitem[Ste21]{stein2021structural}
Dario Stein.
\newblock {\em Structural Foundations for Probabilistic Programming Languages}.
\newblock PhD thesis, University of Oxford, 2021.
\newblock \href{https://dario-stein.de/thesis.pdf}{dario-stein.de/thesis.pdf}.

\bibitem[Tao08]{tao}
Terence Tao.
\newblock What's new. {Ergodicity}, 254a lecture 9, 2008.
\newblock Mathematical blog with proofs,
  \href{https://terrytao.wordpress.com/2008/02/04/254a-lecture-9-ergodicity}{https://terrytao.wordpress.com/2008/02/04/254a-lecture-9-ergodicity}.

\bibitem[\v{C}65]{cencovcategories}
N.~N. \v{C}encov.
\newblock The categories of mathematical statistics.
\newblock {\em Dokl. Akad. Nauk SSSR}, 164:511--514, 1965.
\newblock \href{https://www.mathnet.ru/rus/dan31602}{mathnet.ru/rus/dan31602}.

\bibitem[Vil09]{villani}
Cédric Villani.
\newblock {\em {Optimal transport: old and new}}, volume 338 of {\em
  {Grundlehren der mathematischen Wissenschaften}}.
\newblock Springer, 2009.

\bibitem[Św74]{swirszcz}
Tadeusz Świrszcz.
\newblock Monadic functors and convexity.
\newblock {\em Bull. Acad. Polon. Sci. Sér. Sci. Math. Astronom. Phys.}, 22,
  1974.

\end{thebibliography}
	
\end{document}